\newtheorem{thm}{Theorem}[section]
\newtheorem{cor}[thm]{Corollary}
\newtheorem{lem}[thm]{Lemma}
\newtheorem{prop}[thm]{Proposition}
\theoremstyle{definition}
\newtheorem{defn}[thm]{Definition}
\newtheorem{propdefn}[thm]{Proposition-Definition}
\newtheorem{example}[thm]{Example}
\theoremstyle{remark}
\newtheorem{rem}[thm]{Remark}
\numberwithin{equation}{section}
\newcommand{\Z}{\mathbb Z}
\newcommand{\C}{\mathbb C}
\newcommand{\R}{\mathbb R}
\newcommand{\N}{\mathbb N}
\newcommand{\Pro}{\mathbb P}
\newcommand{\eqv}{\mathrm{eqv}}
\newcommand{\veco}{\overset{\rightarrow}{1}}
\newcommand{\Lef}{\mathbb{L} }
\newcommand{\GE}{\mathbb{G} }
\newcommand{\gr}{\mathrm{gr}}
\font \rus= wncyr10
\newcommand{\sha}{\, \hbox{\rus x} \,}
\newcommand{\MT}{\mathcal{MT}}
\newcommand{\ZZ}{\mathcal{Z}}
\newcommand{\uu}{\mathfrak{u}}
\newcommand{\rel}{\mathrm{rel}}
\newcommand{\GG}{\mathcal{G} }
\newcommand{\Eq}{\mathcal{E}^{\times}_{\partial/\partial q}}
\newcommand{\ssl}{\mathfrak{sl}}
\newcommand{\lw}{\mathrm{lw}}
\newcommand{\av}{\mathsf{a}}
\newcommand{\bv}{\mathsf{b}}
\newcommand{\CC}{\mathcal{C}}
\newcommand{\MI}{\mathcal{MI}}
\newcommand{\dV}{\check{V}}
\newcommand{\Q}{\mathbb Q}
\newcommand{\Li}{\mathrm{Li}}
\newcommand{\Lo}{\mathcal{L}}
\newcommand{\U}{\mathcal{U}}
\newcommand{\To}{\longrightarrow}
\newcommand{\G}{\mathbb{G}}
\newcommand{\x}{\mathsf{x}}
\newcommand{\Xv}{\mathsf{X}}
\newcommand{\Yv}{\mathsf{Y}}
\newcommand{\tone}{\overset{\rightarrow}{1}\!}
\newcommand{\SL}{\mathrm{SL}}
\newcommand{\Or}{\mathcal{O}}
\newcommand{\hh}{\mathrm{h}}
\newcommand{\V}{\mathcal{V}}
\newcommand{\ls}{\mathfrak{ls}}
\newcommand{\mm}{\mathfrak{m} }
\newcommand{\HH}{\mathfrak{H} }
\newcommand{\id}{\mathrm{id} }
\newcommand{\e}{\mathbf{e}}
\newcommand{\EE}{\mathbf{E}}
\newcommand{\Lie}{\mathrm{Lie}}
\newcommand{\ue}{\mathfrak{u}^{\mathrm{geom}}}
\newcommand{\Ue}{{\U}^{\mathrm{geom}}}
\newcommand{\LL}{\mathbb{L}}
\newcommand{\CCu}{\mathcal{G}}
\newcommand{\sv}{\mathrm{sv}}
\newcommand{\comp}{\mathrm{comp}}
\newcommand{\pp}{\pmb{\psi}}
\newcommand{\pls}{\mathfrak{pls}}
\begin{document}
\author{Francis Brown}
\begin{title}[A class of non-holomorphic modular forms II]{A class of non-holomorphic modular forms II :  equivariant  iterated Eisenstein integrals}\end{title}
\maketitle
\begin{abstract} We  introduce  a  new family of real analytic modular forms  on the upper half plane. They are  arguably the simplest class of `mixed' versions of modular forms  of level one and are  constructed out of real and imaginary parts of  iterated integrals of holomorphic Eisenstein series. They  form  an algebra of  functions satisfying many properties analogous to classical holomorphic modular forms. In particular, they admit expansions in $q, \overline{q}$ and $\log |q|$  involving only rational numbers and single-valued multiple zeta values.  The first non-trivial functions in this class are real analytic Eisenstein series.
\end{abstract}

\section{Introduction} 
Let 
$\HH = \{ z: \mathrm{Im} \, z >0\} $ be the upper half plane with its action of $\SL_2(\Z)$:
\begin{equation}\label{gammaact} \gamma  z  = {az + b \over cz +d} \quad \hbox{ where } \quad \gamma = \begin{pmatrix} a & b\\c & d 
  \end{pmatrix}  \in \SL_2(\Z)\ . 
  \end{equation} 
We shall say that a real analytic function 
$$f: \HH \To \C$$
is \emph{modular of weights} $(r,s)$ with $r,s\in \Z$ if for all $\gamma \in \SL_2(\Z)$ it satisfies  
\begin{equation} \label{fbimod}  f(\gamma z ) =   (c z+d)^r (c\overline{z}+d)^s f(z)\ .
\end{equation}

In this paper we construct a ring $\MI^E$  of real-analytic modular functions  on $\HH$ which are modular analogues of the single-valued polylogarithms. They are obtained by taking real and imaginary parts of iterated primitives of  holomorphic Eisenstein series, which are 
defined for all even $k\geq 4$ by
\begin{equation} \label{introEisdefn} \GE_{k}(q) = - {B_{k} \over 2k} + \sum_{n \geq 1} \sigma_{k-1}(n) q^n  \ , 
\end{equation}
where $\sigma$ denotes the divisor function. 
 The  modular analogues of the single-valued logarithm $\log |z|^2$ are  real-analytic Eisenstein series and are well-known.  The  modular analogues of  the Bloch-Wigner dilogarithm
$D(z)= \mathrm{Im} \left( \Li_2(z) + \log|z| \, \log(1-z) \right) $ already lead to completely new functions with   interesting properties  \cite{ZagFest}.  In this paper we study the entire space of such functions.   Most of our results are summarized here.

\begin{thm} \label{thmintromain} The space of real analytic functions $\MI^E$ has the following properties:

\begin{enumerate}
\item  (Expansions). Every  $ f\in \MI^E$ is modular of weights $(r,s)$ for $r,s\geq 0$. It admits a unique expansion, for some $N\geq 0$,  of the form:
\begin{equation} \label{intro: fqcoeffs}
f (q)= \sum_{k=-N}^N \LL^k \, \Big(\! \sum_{m,n\geq 0} a^{(k)}_{m,n}   q^m \overline{q}^n\Big) 
\end{equation} 
where $q= e^{2 \pi i z}$,   $\LL : =   \log |q | =  - 2 \pi  \mathrm{Im}(z)$, and  the coefficients $ a^{(k)}_{m,n}$ are single-valued multiple zeta values.

\vspace{0.05in} 
\item  (Filtrations). The space $\MI^E$ is an algebra over $\Q$,   bigraded by the modular weights $(r,s)$. Furthermore,  it admits a filtration by \emph{length} 
$$ 0 \subset \MI^E_{0} \subset \MI^E_{1} \subset \ldots \subset \MI^E_{k}\subset \ldots $$
and also by \emph{motivic weight} $M$, which is conjecturally a grading\footnote{In this paper we have chosen  to halve the $M$-weight compared to \cite{ZagFest}, in keeping with the standard convention for the weight of multiple zeta values, which is one half of the Hodge-theoretic weight.}. 
The coefficients in the expansion \eqref{intro: fqcoeffs} have $M$-weight bounded above by the $M$-weight of $f$, where $\LL$ has $M$-weight $1$.

\vspace{0.05in} 
\item (Finiteness). The subspaces  $ M_w \MI^E_{r,s}$ of motivic weight $\leq w$, and  fixed modular degree $(r,s)$, are finite-dimensional $\Q$-vector spaces.
A    function in such a subspace is uniquely determined by  a finite number of   coefficients $a^{(k)}_{m,n}$.

\vspace{0.05in} 
 \item  (Differential structure). Let $\partial$ be the differential operator of modular bidegree  $(1,-1)$ which acts on functions of modular weights $(r, \bullet)$ by 
 $$\partial_r =  (z-\overline{z}) \frac{\partial}{\partial z} + r\ .  $$ Let  $\overline{\partial}$ denote its complex conjugate of modular bidegree $(-1,1)$.  These operators are close variants of the Maass raising and lowering operators.  Then 
 \begin{align} \label{intro: partialMIEk}
 \partial \MI^E_{\ell}\  \subset \   \MI^E_{\ell} \  \oplus \    \left( E[\LL] \times \MI^E_{{\ell}-1}  \right)  \  ,  \\
 \overline{\partial} \MI^E_{\ell}\  \subset \   \MI^E_{\ell}    \ \oplus  \   \left( \overline{E}[\LL] \times \MI^E_{{\ell}-1} \right)  \  ,   \nonumber 
 \end{align} 
where $E$ is the  $\Q$-vector space generated by holomorphic Eisenstein series \eqref{introEisdefn}.  In particular, for $f \in \MI^E_{\ell}$ of length ${\ell}$ and of modular weights $(n,0)$, we have 
$$\partial f   \quad \in \quad E[\LL] \times \MI^E_{{\ell}-1}\ .$$
Therefore the space $\MI^E$ is generated by  iterated primitives of Eisenstein series with respect to $\partial, \overline{\partial}$. 
The equations (\ref{intro: partialMIEk}) also  imply  eigenvector relations  on $\MI^E$ with respect to the bigraded Laplace-Beltrami operator.  
 
\vspace{0.05in} 
 \item (Orthogonality). For every $f\in\MI^E$, the function $\partial f$ is  orthogonal to the space of holomorphic cusp forms with respect to the Petersson inner product. 
 \vspace{0.05in} 
 \item  (Algebraic structure).   Let $\lw(\MI^E) \subset \MI^E$ denote the subspace of functions with  modular weights $(n,0)$, i.e., satisfying the classical modular transformation property with no anti-holomorphic automorphy factor\footnote{the notation $\lw$ stands for `lowest weight' in the sense of $\mathfrak{sl}_2$-representations.}. Then  $\lw(\MI^E)$ is dual to the lowest weight vectors in a certain well-known Lie algebra $\ue$ of geometric derivations on the free Lie algebra on two generators. 
\end{enumerate}

\end{thm}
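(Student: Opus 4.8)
\medskip
\noindent\textbf{Proof proposal.} Since $\MI^E$ is defined by an explicit construction, the plan is to build the space and then read off each of (1)--(6). First I would set up the space: take holomorphic iterated integrals of the Eisenstein series $\GE_k$ of \eqref{introEisdefn} along paths in $\HH$, regularised at the cusp $i\infty$ via a tangential base point; these are holomorphic but not modular, transforming under $\SL_2(\Z)$ by a cocycle valued in polynomials. One then forms the \emph{equivariant completion}: add lower-length corrections, built from powers of $z-\overline z$ and complex conjugates of iterated Eisenstein integrals, so as to obtain genuine modular functions. The existence and uniqueness of this completion is a cohomological statement --- the obstruction lies in a group cohomology with polynomial coefficients and vanishes --- and pinning it down is the first task. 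Define $\MI^E$ to be the $\Q$-algebra they span, with \emph{length} the number of Eisenstein factors and \emph{motivic weight} normalised so that an integral of $\GE_k$ contributes $k-1$ and $\LL$ contributes $1$.

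Granting the construction, parts (1)--(3) are bookkeeping. For (1), substitute the expansion \eqref{introEisdefn} of each $\GE_k$ and integrate term by term; the regularisation at $i\infty$ introduces iterated integrals of the constant terms, which are regularised multiple zeta values, while the equivariant corrections force the antiholomorphic variable $\overline q$ and the powers of $\LL=\log|q|$, and the single-valued projection underlying the completion replaces multiple zeta values by single-valued ones. Uniqueness of \eqref{intro: fqcoeffs} holds because the functions $\LL^k q^m \overline q^n$ are linearly independent. The algebra structure and the length and $M$-weight filtrations in (2) come from the shuffle product of iterated integrals, which is subadditive in length and additive in $M$-weight, together with the bigrading by modular weight; the weight bound on the coefficients is tracked through the same computation. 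For (3), a bound on $M$-weight forces a bound on length, and with length and modular degree fixed only finitely many Eisenstein factors can occur, so $M_w\MI^E_{r,s}$ is finite-dimensional; that finitely many coefficients $a^{(k)}_{m,n}$ determine $f$ reduces to a vanishing lemma --- an element of $\MI^E$ whose $q$-expansion vanishes to sufficiently high order is zero --- which I would prove by induction on length, using the differential equations of (4) to descend to length $0$, where it is classical.

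For (4), apply $\partial_r=(z-\overline z)\frac{\partial}{\partial z}+r$ to a holomorphic iterated Eisenstein integral: $\frac{\partial}{\partial z}$ acts by the fundamental theorem of calculus for iterated integrals, stripping off one factor $\GE_k$ and leaving a length-$(\ell-1)$ integral, so that term lands in $E[\LL]\times\MI^E_{\ell-1}$, while differentiating the equivariant corrections contributes a term in $\MI^E_{\ell}$; bookkeeping on modular bidegrees yields \eqref{intro: partialMIEk}, and since finitely many applications of $\partial,\overline\partial$ annihilate any given element, $\MI^E$ is generated by their iterated primitives. Composing the two inclusions in \eqref{intro: partialMIEk} gives the action of the bigraded Laplace--Beltrami operator modulo lower length, hence the asserted (triangular) eigenvector relations. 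For (5), $\partial$ is a variant of the Maass raising operator whose Petersson adjoint is a lowering operator, and the latter annihilates holomorphic forms; since every $f\in\MI^E$ has moderate (polynomial in $\mathrm{Im}\,z$) growth at the cusp while a holomorphic cusp form decays exponentially, integration by parts over $\SL_2(\Z)\backslash\HH$ has no boundary contribution, so $\langle\partial f,g\rangle$ equals $\pm$ the Petersson pairing of $f$ with the lowering operator applied to $g$, which vanishes.

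The real content is (6). Holomorphic iterated Eisenstein integrals are de Rham periods of a pro-unipotent torsor attached to the modular curve with an Eisenstein-type connection; equivalently their generating series solves an elliptic KZB-type equation whose symbol Lie algebra is precisely $\ue$, the Lie algebra of geometric derivations $\varepsilon_{2k}$ on the free Lie algebra on two generators (the Tsunogai--Hain--Matsumoto--Pollack Lie algebra). The equivariant completion is the real-analytic, single-valued realisation of the affine ring of this torsor, so that $\lw(\MI^E)$ --- the weight-$(n,0)$, i.e.\ $\mathfrak{sl}_2$-lowest-weight, part --- becomes dual to the lowest-weight vectors of $\ue$. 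The steps are: (i) identify $\MI^E$ with that affine ring, matching length with the lower central series and $M$-weight with the combined depth-and-$\mathfrak{sl}_2$-weight grading; (ii) verify that the two $\mathfrak{sl}_2$-actions agree, so that ``modular weights $(n,0)$'' means ``lowest weight''; (iii) conclude the duality. I expect (i)--(ii) to be the main obstacle: setting up the Tannakian/Lie-theoretic dictionary and checking simultaneously that the equivariant completion is unobstructed and strictly compatible with all the gradings --- this is also what supplies the vanishing lemma used in (3). Everything outside (6) is, by comparison, routine manipulation of iterated integrals.
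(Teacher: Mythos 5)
Your overall architecture is right --- holomorphic iterated Eisenstein integrals, an equivariant completion, then reading the six properties off the construction --- and your sketches of (1)--(4) track the paper reasonably well (the paper organises (4) through the coproduct on the tensor coalgebra $\Or(\U^{dR}_E)$ and the projectors $\delta^k_{dR}$, but the idea of stripping off one Eisenstein factor is the same). The genuine gap is in the step you dispose of in one sentence: ``the existence and uniqueness of this completion is a cohomological statement --- the obstruction lies in a group cohomology with polynomial coefficients and vanishes.'' That is not how the construction works, and no such generic vanishing is available. The monodromy of the generating series $J$ of iterated Eisenstein integrals is a non-abelian cocycle $\CCu \in Z^1(\SL_2(\Z), \Ue(\C))$ valued in a pro-unipotent group, and $\CCu$ is emphatically \emph{not} a coboundary: its coefficients are multiple modular values, which include non-critical $L$-values of cusp forms. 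What the paper proves (theorem \ref{thmExistbphi}, which it describes as its technical heart) is that the comparison between $\CCu$ and its image under complex conjugation composed with $-1 \in \G_m$ is implemented by a pair $(b^{\sv},\phi^{\sv})$ with coefficients in $\ZZ^{\sv}$; only the resulting single-valued cocycle $(b^{\sv})^{-1} b^{\sv}\big|_{\gamma}$ is a coboundary, and that is what gets trivialised to produce $J^{\eqv}$. Establishing this requires the real Frobenius acting on the relative completion of the fundamental group of $\mathfrak{M}_{1,1}$, compatible splittings of the weight and Hodge filtrations, Hain's theorem that the fundamental group of the punctured Tate curve is mixed Tate over $\Z$, and the computation of the single-valued period map for $\MT(\Z)$. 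Without this input you obtain neither the existence of the modular completion nor --- crucially for part (1) --- the assertion that its coefficients lie in $\ZZ^{\sv}$ rather than in some unidentified ring of periods.

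A secondary point on (6): your step (ii), ``verify that the two $\mathfrak{sl}_2$-actions agree,'' rests on a misreading of the correspondence. The paper explicitly notes that the $\mathfrak{sl}_2$-action on $\Or(\Ue)$ does \emph{not} correspond to the action of $\partial, \overline{\partial}$ on $\MI^E$. The dictionary is instead: a lowest weight vector $v$ of $\Or(\Ue)$ generates an irreducible $\SL_2$-submodule, hence a vector-valued coefficient $c_v(J^{\eqv})$, from which one extracts the $(n,0)$ modular component via the decomposition in $(X-\tau Y)^r(X-\overline{\tau}Y)^s$. The resulting map $\chi$ is an isomorphism only on the associated graded for the length filtration (because $(b^{\sv},\phi^{\sv})$ is ambiguous up to twisting by an $\SL_2$-invariant element), and its injectivity is not formal: it rests on the linear independence of iterated integrals of modular forms, proved via relative completion or an Eichler--Shimura argument. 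Your proposal supplies neither ingredient.
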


This theorem is not exhaustive: the  class of functions $\MI^E$  have several other properties which are proved  in this   paper, and many more which are not.
For example,  to every $f \in \MI^E$ we can associate an $L$-function which is a linear combination of Dirichlet series. It  admits   a meromorphic continuation to $\C$  and satisfies  a functional equation.
We  show that this $L$-function  is a single-variable restriction of  the multiple-variable $L$-functions recently defined in \cite{MultipleL}. 

\subsection{Examples}
\subsubsection{Length zero} 
In length zero, we have 
$$\MI^E_0 = \mathcal{Z}^{\sv}$$
where $\mathcal{Z}^{\sv}$ is the weight-filtered $\Q$-algebra of single-valued multiple zeta values, viewed as constant functions of modular weights $(0,0)$.  A definition is recalled below.  This space is already infinite-dimensional, but is finite in each $M$-weight. It does not contain any powers of $\pi$ but contains the odd zeta values 
$$\zeta^{\sv}(2n+1) = 2 \, \zeta(2n+1)$$
in $M$-weight $2n+1$, for $n\geq 1$.  The first interesting generator occurs in weight $11$:
$$\zeta^{\sv}(3,5,3) = 2 \, \zeta(3, 5, 3) -  2\, \zeta(3)\zeta(3, 5)  -10\, \zeta (3)^2\zeta(5)\ .$$
The space $\MI^E$ is  an algebra over $\mathcal{Z}^{\sv}$ and satisfies $$M_0 \MI^E_0 = M_0 \mathcal{Z}^{\sv}=\Q \ .$$

\subsubsection{Length one}  In length one, $\MI^E$ contains the first  interesting functions:
$$\MI^E_1  \cong \bigoplus_{r,s} \mathcal{E}_{r,s}  \,\mathcal{Z}^{\sv} \  $$
where the direct sum is over all integers $r,s\geq 0$  such that $w=r+s>0$ is even and
\begin{equation} \label{Eijelementary} 
\mathcal{E}_{r,s} (z)=  { w! \over   ( 2   \pi i )^{w+2}}  {1 \over 2}   \sum_{(m,n) \in \Z^2 \backslash (0,0)}   { \LL  \over (mz+n)^{r+1} (m\overline{z}+n)^{s+1} } 
\end{equation} 
are real analytic Eisenstein series, which  are  modular of weights $(r,s)$. The functions $\mathcal{E}_{r,s}$ admit an expansion of the  form:
$$\mathcal{E}_{r,s} = a_{w} \,\LL  + b_{r,s}\,  \zeta^{\sv}(w+1) \, \LL^{-w}  \  + \  \left(\hbox{a series in }   \Q[[ q, \overline{q}]][\LL^{-1}] \right)$$
where   $a_w$, $b_{r,s}$ are explicit rational numbers.   The elements $\mathcal{E}_{r,s}$ and $\LL$ have $M$-filtration equal to $1$.
 One checks that the $M$-filtration of every term in the previous expansion is  indeed $\leq 1$. 
The differential structure  amounts to the equations 
$$ \partial \, \mathcal{E}_{w,0}   =  \LL \GE_{w+2}   \quad  \hbox{ and }  \quad  \partial \,\mathcal{E}_{r,s} = (r+1) \mathcal{E}_{r+1, s-1} $$    for all $1\leq s\leq w$, and their complex conjugates.  The family of functions $\mathcal{E}_{r,s}$ are associated to a universal mixed elliptic motive whose fibre is a  simple extension of mixed Tate motives: namely,   $\Q(-w-1)$ by $\Q$, whose (single-valued) periods are rational multiples of $1$ and   $ \zeta^{\sv}(w+1)$.
 The  completed $L$-function  $\Lambda(\mathcal{E}_{a,b},s)$  associated to $\mathcal{E}_{a,b}$ is proportional to a  certain rational function of $s$ multiplied by  a product of two Riemann zeta functions $\zeta(s) \zeta(s-2w+1)$.
 
\subsubsection{Higher length}
Starting from length two,  $\MI^E$ contains  polynomials in the $\mathcal{E}_{r,s}$ together with a large  array of completely new functions, some examples of which were described  in \cite{ZagFest}. 
In length two, the enumeration is  as follows. To every pair of Eisenstein series $\G_{2m+2}$, $\G_{2n+2}$ we can take real and imaginary parts of their double iterated integral  in a prescribed way to obtain a priori $(m+1)(n+1)$  functions.   However, the condition of  orthogonality  (5) imposes exactly  one constraint (in every  relevant modular bidegree) for every holomorphic cusp form and pushes the dimension of $\MI^E_{2}$ down by that amount. The precise condition involves the critical values of $L$-functions of cusp forms, and is related to the existence of non-trivial  extensions of $\Q$ by motives of cusp forms (which are governed by non-critical values of the said $L$-functions). Thus the space of functions $\MI^E$ grows   approximately quadratically in length two and cubically in length three.  The precise  enumeration  in lengths two and three is stated in \S\ref{sect: PLS}, but is not known explicitly  in length four or above, and is closely related to deep questions about the structure of the category of mixed elliptic motives, the Broadhurst-Kreimer conjecture on depth-graded multiple zeta values, and the existence of `higher' Rankin-Selberg convolutions.

\subsection{Comments}
\begin{itemize}
\item The  class of functions  $\MI^E$  share many properties with classical holomorphic modular forms. Indeed, a key feature of the latter is  the fact that a holomorphic modular form is uniquely determined by an explicit number of Fourier coefficients depending on its modular weight. 
The same property is true for the subspace of functions in $\MI^E$ of bounded motivic weight.
It would be very interesting to  make this effective and determine an exact bound for  the number of  expansion coefficients which  determine an element of $\MI^E$ uniquely. 

\vspace{0.05in}
\item 
It should be relatively straightforward to generalise this theory to higher levels.   In a different direction, $\MI^E$ is a subspace  of a larger class of functions  which also includes iterated primitives of  cusp forms as well as Eisenstein series \cite{MMV}.  For this, one must  allows  poles at the cusps, for example.  The length one part of this  class  is  the theory of 
weak harmonic Maass forms and mock modular forms  of level one   \cite{CNHMF3}, but involves new functions thereafter. 

\vspace{0.05in}

\item
The algebra $\ue$ has occured in various guises,  most notably as the fundamental  representation of the Tannaka Lie algebra of Hain and Matsumoto's universal mixed elliptic motives. 
The   space $\MI^E$ can be viewed as an elementary realisation  of this category as a space of functions: for example the structure of $\MI^E$ encodes information about extension data in that category. 

\vspace{0.05in}

\item  The functions $\MI^E$ are expected to contain the modular graph functions studied in string theory (see below), and in fact general correlation functions for configurations of points on  the universal elliptic curve. 
If true, the theorem \ref{thmintromain} implies most of the open conjectures about this class of functions.

\vspace{0.05in}
\item The Lie algebra $\ue$ is closely related to combinatorial relations satisfied by multiple zeta values. In fact, in an earlier work we showed that it satisfies linearised versions of the double-shuffle equations, about which several important conjectures are still open. Items $(5)$ and $(6)$ taken together therefore imply that these equations are, in a precise sense, orthogonal to cusp forms. This should shed light on their structure.

\vspace{0.05in}
 \end{itemize}

\subsection{Construction}
The functions in $\MI^E$ are  the equivariant sections of the unipotent fundamental group of the universal elliptic curve. To explain this, let us recall the construction of single-valued polylogarithms in genus zero \cite{SVMP}, before passing to its elliptic generalisation. 
Consider the following multi-valued function $L(z)$ on $\C^{\times} \backslash \{1\}$ taking values in formal power series $\C\langle\langle \x_0, \x_1\rangle \rangle$ in two non-commuting variables $\x_0, \x_1$. It  is the unique solution to the Knizhnik-Zamolodochikov equation 
$$d L(z) = \omega_{KZ}\, L(z) \qquad \hbox{ where }   \qquad  \omega_{K\!Z} = {dz \over z}  \x_0  + {dz \over 1-z} \x_1 $$
with the initial condition $L(\tone_0)=1$ (denoting the regularised limit as $z\rightarrow 0$ along the real axis with unit speed),
  where $\x_0, \x_1$ act by left multiplication.  It has  a canonical  single-valued version 
  $ \Lo : \C^{\times} \backslash \{1\} \To \C\langle \langle \x_0, \x_1 \rangle \rangle $
 which satisfies  the identical differential equation and initial condition.
It can be viewed as a formal power series
$$\Lo = \sum_{w}  \Lo_w(z)  w \quad \in \quad  \C\langle \langle \x_0, \x_1 \rangle\rangle  $$
where the sum ranges over all words $w$ in $\x_0,\x_1$. Its coefficients
$\Lo_w(z)$  are  single-valued analogues of  multiple polylogarithms. Their regularised  limits  at the point $1$ are 
$$\zeta^{\sv} (w): = \Lo_w( -\tone_{1}) $$
and called single-valued multiple zeta values. They  generate $\mathcal{Z}^{\sv}$, which is strictly contained in the algebra of all multiple zeta values.  Perhaps surprisingly, the numbers $\zeta^{\sv}(w)$ satisfy all motivic relations between ordinary multiple zeta values.  

  \subsubsection{Genus 1}  The  universal elliptic KZB equation \cite{CEE, LR, HaKZB, BrLevin} that we shall use is
     $$ d J(z) = \omega J(z)$$
  where  $\omega$ is the formal one-form:
 $$\omega = -   {dq \over q}   \mathrm{ad}(\varepsilon_0)  \ +  \ \sum_{n \geq 1}  {2 \over (2n)!} \GE_{2n+2}(q) {dq \over q}  \varepsilon_{2n+2}  \ .
$$
It is the restriction of the full KZB connection to the zero section of the universal elliptic curve. 
The coefficients $\varepsilon_{2n}$, for all $n \geq 0$ are derivations of the free Lie algebra on two generators $ \Lie(\av, \bv)$. They were first written down by Tsunogai \cite{Tsunogai} in the $\ell$-adic context,  and studied by Nakamura \cite{Nakamura}.
  They are uniquely determined by the formulae $\varepsilon_{2n}[a,b]=0$,  $\varepsilon_{2n}(\av) \in  [ \Lie(\av, \bv), \Lie(\av, \bv)]$ and  
   $$\varepsilon_{2n}( \bv) =  - \mathrm{ad}(\bv)^{2n} \av \ .$$
They generate a Lie algebra we denote by $\ue$, but satisfy many interesting quadratic relations associated to cusp forms \cite{Pollack} (which will be a consequence  of the orthogonality condition (5) in theorem \ref{thmintromain}).
  Let $\Ue$ denote the affine group scheme whose Lie algebra is the completion of  $\ue$.  It admits a right action by $\SL_2$.  The function $J$ is a multivalued function of $z\in \HH$ taking values in 
  $\Ue(\C)$. 
Our first theorem proves the existence of a   genus one analogue of $\Lo$:

\begin{thm} There exists a  real-analytic function  
$$J^{\eqv} :  \HH \To \Ue(\C)$$
which satisfies the equation 
$$  { \partial \over \partial \tau}J^{\eqv}  = \omega J^{\eqv}  $$
and is equivariant for the action of $\SL_2(\Z)$, i.e., 
$$J^{\eqv}(\gamma \tau) \big|_{\gamma} = J^{\eqv}(\tau) \qquad \hbox{ for all } \gamma \in \SL_2(\Z) \  . $$
\end{thm}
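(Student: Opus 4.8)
The plan is to carry out the genus-one analogue of the construction of single-valued polylogarithms recalled above from \cite{SVMP}, with ``equivariant for $\SL_2(\Z)$'' playing the role of ``single-valued on $\C^\times\setminus\{1\}$'': one solves the KZB equation holomorphically, extracts the resulting $\SL_2(\Z)$-cocycle, and then corrects the solution by a suitably twisted complex conjugate so as to kill that cocycle. First I would produce the holomorphic solution. Since $\HH$ is simply connected, the equation $\tfrac{\partial}{\partial\tau}J=\omega J$ admits, after regularising the logarithmic term $-\tfrac{dq}{q}\,\mathrm{ad}(\varepsilon_0)$ at the cusp $\tau\to i\infty$ by means of a tangential base point, a unique holomorphic solution $J\colon\HH\to\Ue(\C)$ normalised to $1$ at the cusp; its coefficients in the $\varepsilon_{2n}$ are the holomorphic iterated Eisenstein integrals. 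The existence of a flat, $\SL_2(\Z)$-equivariant KZB connection on the universal elliptic curve, of which $\omega$ is the restriction to the zero section, is the content of \cite{CEE, LR, HaKZB, BrLevin} and is used here as a black box.

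Next I would extract the $\SL_2(\Z)$-cocycle. Because $\omega$ is $\SL_2$-equivariant --- the $\GE_{2n+2}$ are modular of weight $2n+2$, the $\varepsilon_{2n+2}$ transform under the right $\SL_2$-action on $\Ue$ as the matching symmetric power, and the $\varepsilon_0$-term is compatible --- the function $\tau\mapsto J(\gamma\tau)\big|_\gamma$ again solves $\tfrac{\partial}{\partial\tau}(-)=\omega\,(-)$ for every $\gamma\in\SL_2(\Z)$, hence equals $J(\tau)\,C_\gamma$ for a unique constant $C_\gamma\in\Ue(\C)$. The assignment $\gamma\mapsto C_\gamma$ is a $1$-cocycle for the action of $\SL_2(\Z)$ on $\Ue(\C)$; it is generated by $C_T$, an explicit unipotent element produced by the $\varepsilon_0$-pole at the cusp, and by $C_S$, the ``elliptic KZB associator'', an avatar of Enriquez's elliptic associator assembled from the Drinfeld associator $\Phi_{KZ}$ and from regularised iterated integrals of the $\GE_{2n+2}$.

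Then comes the correction. Since the $\GE_{2n+2}$ in \eqref{introEisdefn} have rational, hence real, Fourier coefficients, the complex-conjugate one-form $\overline\omega$ is obtained from $\omega$ by $\tau\mapsto\bar\tau$ together with complex conjugation of coefficients, so that $\overline J$ solves the conjugate equation with $\SL_2(\Z)$-cocycle $\overline{C_\gamma}$. The function $J^{\eqv}$ is then constructed as the canonical ``single-valued'' combination of $J$ with a twisted complex conjugate $\overline J$ --- the genus-one analogue of the series $\Lo$ --- characterised by the three requirements that it satisfies the same equation $\tfrac{\partial}{\partial\tau}J^{\eqv}=\omega J^{\eqv}$, that it is $\SL_2(\Z)$-equivariant, and that its regularised limit at the cusp is prescribed (equal to the single-valued elliptic associator). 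Uniqueness is then immediate: the ``ratio'' of two such functions is annihilated by $\tfrac{\partial}{\partial\tau}$ and is $\SL_2(\Z)$-invariant with moderate growth at the cusp, hence constant, and the constant is fixed by the cusp normalisation. The output is genuinely real-analytic and not holomorphic, since the twist introduces dependence on $\bar\tau$ --- exactly the form of the statement.

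The hard part, which is where the construction really has content, is proving that such a combination exists, i.e.\ that the cocycle $\{C_\gamma\}$ can be cancelled against $\{\overline{C_\gamma}\}$: concretely, one needs a fixed automorphism $\phi$ of $\Ue$ (composed if necessary with the antipode and the $\SL_2$-structure) with $\phi(\overline{C_\gamma})=C_\gamma$ for all $\gamma\in\SL_2(\Z)$. This is the genus-one incarnation of the ``reality'' (single-valued) structure: conceptually it is the de Rham--Betti comparison isomorphism for the $\SL_2(\Z)$-equivariant prounipotent fundamental group of the universal elliptic curve, which carries such a real structure because the universal mixed elliptic motives realising it are defined over $\Q$; concretely it can be bootstrapped from the genus-zero case, namely from the reality of $\Phi_{KZ}$, which controls the $C_S$-part through the elliptic associator, together with the rationality of the $q$-expansions of the $\GE_{2n+2}$, which pins down the Eisenstein part of $C_\gamma$. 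In practice one would trivialise the obstruction by induction along the length (word-length in $\av,\bv$) filtration: at each stage the required correction is determined up to an equivariant cocycle valued in Eisenstein- and cusp-form coefficient modules, and one must check that the relevant matching of periods can always be solved --- the one genuinely delicate point, and the place where the careful treatment of the tangential base point at the cusp, and of the cusp-form contributions to the cocycle, enter.
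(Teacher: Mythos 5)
Your overall architecture --- holomorphic solution $J$ normalised at a tangential base point at the cusp, extraction of the cocycle $\gamma\mapsto \CCu_\gamma$, and correction of $J$ by a twisted complex conjugate so as to trivialise that cocycle --- is exactly the paper's (definition \ref{defnJeqv} and theorem \ref{thmJeqv}). But two concrete points in your set-up of the trivialisation are wrong or missing, and the central existence statement is left essentially unproved. First, $\gamma\mapsto\overline{\CCu_\gamma}$ is \emph{not} a cocycle for the relevant action: the Betti action of $\SL_2(\Z)$ on the de Rham group $\Ue(\C)$ goes through the comparison $(\ref{SL2dRversusB})$ and involves powers of $2\pi i$, so $\overline{\CCu}_{gh}=\overline{\CCu}_g\big|_{\overline{h}}\,\overline{\CCu}_h$ with $\overline{h}\neq h$ in general. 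One must compose complex conjugation with the element $-1\in\G_m(\Q)$ of the $M$-grading to obtain the involution $\sv$ of $(\ref{sdef})$; only then is $\sv\,\CCu\in Z^1(\SL_2(\Z),\Ue(\C))$. Your parenthetical ``composed if necessary with the antipode and the $\SL_2$-structure'' does not identify this twist, and without it the equation you propose to solve is not well posed. Second, the correct relation is not $\phi(\sv\,\CCu_\gamma)=\CCu_\gamma$ for an $\SL_2$-invariant automorphism $\phi$ alone: the two cocycles agree only up to the coboundary of an element $b^{\sv}\in\Ue(\ZZ^{\sv})$, i.e.\ $(b^{\sv})^{-1}\big|_{\gamma}\,\phi^{\sv}(\sv\,\CCu_\gamma)\,b^{\sv}=\CCu_\gamma$ as in theorem \ref{thmExistbphi}. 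The element $b^{\sv}$ is not optional --- it already appears at length one, carrying the odd zeta values --- and it is exactly what makes $J^{\eqv}$ well defined only up to right multiplication by $a\in(\Ue)^{\SL_2}(\ZZ^{\sv})$.

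More seriously, the existence of the pair $(b^{\sv},\phi^{\sv})$ is where all the content lies, and your proposal does not establish it. Your ``conceptual'' route is in fact the paper's: the real Frobenius $F_{\infty}$ determines an element $\mathbf{s}$ of the Tannaka group of the category of realisations, acting compatibly on the relative completion $\GG^{dR}_{1,1}$ and on $\Pi$; after the $\G_m$-modification and a choice of splitting, its image is represented by such a pair, and the single-valuedness of its coefficients requires Hain's theorem that the fundamental group of the punctured Tate curve is mixed Tate over $\Z$, combined with \cite{SVMP}. Your ``concrete'' alternative --- induction on the length filtration, bootstrapped from the reality of the Drinfeld associator and the rationality of the $q$-expansions --- founders precisely at the point you yourself flag: at each length the obstruction to trivialising lives in $H^1(\SL_2(\Z),V_{2n}\otimes\C)$, whose cuspidal part has no evident reason to vanish, and ``one must check that the relevant matching of periods can always be solved'' restates the problem rather than solving it. So the proposal correctly locates the difficulty but does not overcome it; as written it would not yield a proof without importing theorem \ref{thmExistbphi} (or an equivalent motivic input) wholesale.
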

The generating  series  $J^{\eqv}$, unlike its genus zero counterpart $\Lo$, is only well-defined up to right-multiplication by  an element $a\in \Ue(\mathcal{Z}^{\sv})^{\SL_2}$.  The series $J^{\eqv}$ can be viewed as a formal power series in certain \emph{coefficients} which are maps 
  $$c(J^{\eqv}): \HH \To V_{2n}\otimes_{\Q} \C$$
where    $V_{2n} = \bigoplus_{r+s=2n} X^r Y^s \Q$ is the  space of homogeneous polynomials in $X, Y$ (corresponding to $\bv$ and $\av$), 
  equipped with a right action of $\SL_2$. 
 The coefficients $c(J^{\eqv})$ are vector-valued real analytic modular forms  and satisfy:
  $$c(J^{\eqv})(\gamma \tau)  = c(J^{\eqv})(\tau) \big|_{\gamma}$$
 for all $\gamma \in \SL_2(\Z)$.  Any such function can be uniquely written  in the form
$$c(J^{\eqv})(\tau) =  \sum_{r+s=2n}  c_{r,s} (\tau)  (X-  \tau Y)^r (X- \overline{\tau} Y)^s\ , $$
where $c_{r,s} : \HH \rightarrow \C$ are real analytic and modular of weights $(r,s)$.

\begin{defn} The space $\MI^E$ is the  $\mathcal{Z}^{\sv}$-module generated by all $c_{r,s}(J^{\eqv})$. \end{defn} 
One could equivalently  set up the theory using vector-valued modular forms $c(J^{\eqv})$ instead of the constituent functions $c_{r,s}$.  This is merely a matter of choice: we opted for the latter  because of potential applications to physics.

  \subsection{Further work}
  There are a number of questions which we have not addressed. 
  \begin{enumerate}
\item   A mixed elliptic motive is a mixed Tate motive over $\Z$, equipped with an action of a certain Lie algebra on, say,  its de Rham realisation. This Lie algebra  is conjecturally
isomorphic to $\ue$. One can define motivic lifts of the functions $f \in \MI^E$ whose coefficients will be single-valued motivic multiple zeta values. 
In this precise sense, the functions in $\MI^E$ are associated to mixed Tate motives over $\Z$  carrying some extra structure. In particular, they will inherit an action 
of the motivic Galois group of mixed Tate motives over $\Z$, whose action on $\ue$ was studied in \cite{Sigma}, and is known to lowest orders. 

\item The functions in $\MI^E$  provide kernels for the Rankin-Selberg method. 
In lengths zero and one, these are known by the unfolding technique. It would be interesting to extend this to  higher lengths.
\item  The functions defined in this paper are constructed out of the action of the real Frobenius $F_{\infty}$ on the unipotent fundamental group of the universal elliptic curve.  It would be interesting to study $p$-adic versions, which should be constructed in a similar manner using the Frobenius at a finite prime. 
\end{enumerate}
The first remark  extends the similarities between $\MI^E$ and the class of classical holomorphic modular forms: whereas the latter have algebraic coefficients and admit an action of the usual Galois group, the former have coefficients which are periods, and admit an action by a motivic Galois group.

One motivation for this construction was the long standing problem of defining a natural class of functions which contain the modular graph functions arising in genus one closed string perturbation theory \cite{Graph1, Graph2, Graph3, Graph5, Zerbini}. This problem was briefly discussed in \cite{ZagFest}, and has  an extensive literature.  A  relation between modular graph functions and the class $\MI^E [ \LL^{\pm}]$ should follow by a version of the argument given in the author's thesis. The key point is that the bar  de Rham complex defined in \cite{BrLevin} has trivial cohomology.  The properties of $\MI^E$ described above should then explain several phenomena which have been observed for modular graph functions.

\subsection{Acknowledgements} This work was partly supported by ERC grant 724638 and written during a stay at the IHES.    Many thanks to Richard Hain for  ongoing discussions about relative completion and mixed elliptic motives  and to Nils Matthes, for corrections.  Many thanks also to Andrey Levin, who  suggested several years ago that double elliptic polylogarithms  could be orthogonal to cusp forms.

\section{Notations and background on the class of functions $\mathcal{M}$}
We recall some notations and background from  \cite{ZagFest}.

\subsection{General definitions} We write $z= x+iy$,  $q= \exp(2  \pi i z)$ and set
\begin{equation} \label{LLdef} \LL = \log |q| = {1 \over 2 } \log q \overline{q} =  \pi i  (z- \overline{z}) = - 2 \pi y   \ .
\end{equation}  
Consider the  bigraded algebra  of real-analytic functions \cite{ZagFest} $$\mathcal{M} = \bigoplus_{r,s} \mathcal{M}_{r,s}$$ 
 which are modular of weights $(r,s)$ and admit an expansion of the form \eqref{intro: fqcoeffs}
 where $a^{(k)}_{m,n}$ are arbitrary complex numbers. The quantity $w=r+s$  is called  the total modular weight. The space  $\mathcal{M}_{r,s}=0$ if $w$ is odd.
 The  constant part  of $f$  (called the Laurent polynomial or zeroth Fourier mode in the physics literature)   is defined to be
$$f^0   =  \sum_k  a^{(k)}_{0,0}   \LL^k   \qquad \in \qquad  \C [\LL^{\pm}] \ .$$
The subspace of cusp forms $\mathcal{S} \subset \mathcal{M}$ is defined to be $\ker (f \mapsto f^0 : \mathcal{M} \rightarrow \C[\LL^{\pm}])$. 
There is a decreasing filtration by the order of poles in $\LL$:
\begin{equation} \label{Polefilt} P^p \mathcal{M} = \{ f \in \mathcal{M}:    a^{(k)}_{m,n}(f) = 0  \quad  \hbox{ if } \quad  k<p\} 
\end{equation} 
which satisfies $P^a \mathcal{M} \times P^b \mathcal{M} \subset P^{a+b} \mathcal{M}$. 
In this paper we  shall work in  the subspace 
\begin{equation}\label{FirstQuadPoles}  \bigoplus_{r,s\geq 0} P^{-r-s} \mathcal{M}_{r,s}  \quad \subset \quad \mathcal{M} 
\end{equation}
with  non-negative modular weights  $(r,s)$ and poles in $\LL$  bounded by $w$.

\subsection{Differential  operators}  For all $r,s\in \Z, $ there exist operators 
$$
\partial_r : \mathcal{M}_{r,s}   \To    \mathcal{M}_{r+1, s-1} \qquad  , \qquad 
  \overline{\partial}_s : \mathcal{M}_{r,s}   \To   \mathcal{M}_{r-1, s+1}  \nonumber 
$$
which are a variant of the Maass raising and lowering operators, 
defined by 
\begin{equation} \partial_r = (z - \overline{z}) {\partial \over \partial z} + r  \qquad  , \qquad \overline{\partial}_s = (\overline{z} - z) {\partial \over \partial \overline{z}} + s \ .
\end{equation}
If we extend the action of $\partial_r, \overline{\partial}_s$ to all of $\mathcal{M}$ to be the zero map on all components $\mathcal{M}_{i,j}$ for $i\neq r$ or $j\neq s$ respectively, we obtain bigraded  differential operators
$$ \partial = \sum_r \partial_r : \mathcal{M} \To \mathcal{M} \qquad \hbox{ and } \qquad 
 \overline{\partial} = \sum_s \overline{\partial}_s : \mathcal{M} \To \mathcal{M}  $$
of modular weights $(1,-1)$ and $(-1,1)$ respectively.  They satisfy
$\partial(\LL) =\overline{\partial}(\LL) = 0$,
and generate a copy of $\ssl_2$ acting upon $\mathcal{M}$:
\begin{equation}
[ h ,\partial  ] =   2 \partial \qquad \ , \qquad  [ h , \overline{\partial}  ] = -   2 \overline{\partial} \qquad \ , \qquad [\partial, \overline{\partial}]=h\  
\end{equation} 
where $h : \mathcal{M} \rightarrow \mathcal{M}$
is multiplication by $r-s$ on the component $\mathcal{M}_{r-s}$ of $\mathcal{M}$.
In \cite{ZagFest} it is shown that   the kernels of the operators $\partial, \overline{\partial}$ are \label{propModularKernel} 
$$\mathrm{ker} (\partial_r : \mathcal{M}_{r,s} \rightarrow \mathcal{M}_{r+1,s-1} )  \quad  =   \quad \LL^{-r} \overline{M}_{s-r}$$
$$\mathrm{ker} (\overline{\partial}_s : \mathcal{M}_{r,s} \rightarrow \mathcal{M}_{r-1,s+1} )  \quad  =  \quad \LL^{-s} M_{r-s}$$
where $M_n$ and $\overline{M}_{n}$ denote the spaces of holomorphic and antiholomorphic modular forms, respectively.  In particular, 
$\ker \partial \cap \ker \overline{\partial} = \C[\LL^{\pm}]$.

\subsection{Laplace operator}
For any  $r,s \in \Z$, 
the Laplace operator 
$\Delta_{r,s} : \mathcal{M}_{r,s} \rightarrow \mathcal{M}_{r,s}$ is defined 
by the equivalent formulae  
\begin{equation}  \label{LaplaceDef}
 \Delta_{r,s}  \quad = \quad     - \overline{\partial}_{s-1} \partial_r   + r(s-1)   \quad = \quad    -  \partial_{r-1} \overline{\partial}_{s} +s(r-1)\ . 
  \end{equation} 
  The component   $\Delta_{0,0}$ is the Laplace-Beltrami operator.
 Let us denote by 
$\Delta : \mathcal{M} \rightarrow \mathcal{M}$
 the linear operator which acts by $\Delta_{r,s}$ on the component  $\mathcal{M}_{r,s}$. 
 
\subsection{Petersson inner product}
 For any integer $n$ the map
\begin{equation}  \label{IPpairing}
f \quad \times \quad  g  \qquad   \mapsto  \qquad \langle f, g\rangle :=  \int_{\mathcal{D}} f (z)   \overline{g(z)} \,  y^n  \, d\hbox{vol}  \nonumber 
\end{equation}
defines a pairing  $ \mathcal{M}_{r,s} \times \mathcal{S}_{n-s,n-r}   \rightarrow  \C $,
where
$$\mathcal{D} = \{ |z| >1 , |\mathrm{Re} (z) | <  \textstyle{1\over 2} \} \qquad \hbox{ and } \qquad d\hbox{vol} = \displaystyle{dx dy \over y^2}  $$
are the interior of the standard fundamental domain for the action of $\SL_2(\Z)$ on $\HH$, and  the $\SL_2(\Z)$-invariant volume form on $\HH$ in its standard normalisation. 

Restricting to holomorphic cusp forms $S_n \subset M_n$, we deduce pairings
$$ \langle f, g\rangle:    \mathcal{M}_{r,s} \times S_{r-s}  \To  \C \qquad \hbox{ and } \qquad  \langle f, 
\overline{g}\rangle:    \mathcal{M}_{r,s} \times \overline{S}_{s-r}  \To  \C \ .$$
 Since the Petersson inner product is non-degenerate on holomorphic cusp forms, these two pairings are equivalent to  a linear map
\begin{equation} \label{holprojection} p = (p^{h} , p^{a}) : \mathcal{M}_{r,s} \To  S_{r-s} \oplus \overline{S}_{s-r} \  , \end{equation}
 whose components are called the holomorphic and anti-holomorphic projections \cite{Sturm}.

\begin{thm} \label{thmpartialsorthogtocusp} Let $f \in \mathcal{M}_{r,s}$. If  $f = \partial F$ for some  $F\in  \mathcal{M}$ then 
\begin{equation} \langle f, g \rangle= 0  \qquad \hbox{ for all } \quad g \in S_{r-s} \hbox{ holomorphic} \  . \end{equation} 
In particular, $f$ is in the kernel of the holomorphic projection $(\ref{holprojection})$. 
\end{thm}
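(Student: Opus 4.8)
The plan is to compute the Petersson pairing $\langle \partial F, g\rangle$ for a holomorphic cusp form $g\in S_{r-s}$ by an integration-by-parts argument on the fundamental domain $\mathcal{D}$, exploiting the fact that $\partial$ is (up to lower-order terms) a first-order holomorphic differential operator and that $g$ is annihilated by the lowering operator. First I would write out the pairing explicitly: with $f=\partial_r F \in \mathcal{M}_{r,s}$ and $n$ chosen so that $g\in S_{n-s,n-r}$ with $S_{n-s,n-r}$ identified with holomorphic cusp forms of weight $r-s$, i.e. $n=r$, we have $\langle \partial_r F, g\rangle = \int_{\mathcal{D}} (\partial_r F)(z)\,\overline{g(z)}\, y^{r}\, d\mathrm{vol}$. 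The idea is to move the operator $\partial_r = (z-\overline z)\tfrac{\partial}{\partial z} + r$ off $F$ and onto the rest of the integrand. Since $z-\overline z = 2iy$ and $d\mathrm{vol} = dx\,dy/y^2$, the operator $\partial_r$ combined with the weight factor $y^{r}$ is essentially a total derivative: one checks the identity
\begin{equation*}
(\partial_r F)\,\overline{g}\, y^{r-2}\, dx\wedge dy \;=\; d\big(\text{something}\big)\; -\; F\cdot\big(\text{adjoint applied to }\overline g\, y^{r-2}\big)\, dx\wedge dy,
\end{equation*}
and the adjoint term vanishes because $\overline{g(z)}\,y^{r-1}$ is (anti-holomorphically) closed in the appropriate sense: concretely $\tfrac{\partial}{\partial z}\big(\overline{g(z)}\,y^{r-1}\big)$ can be absorbed since $g$ is holomorphic and $y$ is the only surviving variable, and the weight $r-1$ is exactly the one making this work — this is the standard computation underlying Haberland-type formulas and Sturm's holomorphic projection.

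Next I would deal with the boundary. By $\SL_2(\Z)$-invariance of the integrand (which follows from the modularity of $\partial F\in \mathcal M_{r,s}$, the cusp-form transformation of $g$, and the invariance of $d\mathrm{vol}$), the contributions from the two vertical sides $\mathrm{Re}(z)=\pm\tfrac12$ of $\partial\mathcal D$ cancel, and the contribution from the arc $|z|=1$ cancels with itself under $z\mapsto -1/z$. The remaining boundary term is the limit as $y\to\infty$ of the relevant $1$-form integrated over the horizontal segment; here one uses that $g$ is a cusp form, so $g(z) = O(e^{-2\pi y})$, while $F$ has at worst the polynomial-in-$\LL$ growth allowed by \eqref{intro: fqcoeffs} (and we are working inside $\bigoplus P^{-r-s}\mathcal M_{r,s}$), so the product decays exponentially and the boundary term vanishes. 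Hence $\langle \partial F, g\rangle = 0$. The final sentence — that $f$ lies in the kernel of $p$ in \eqref{holprojection} — is then immediate, since $p^{h}(f)$ is by definition the element of $S_{r-s}$ representing the functional $g\mapsto \langle f,g\rangle$ under the non-degenerate Petersson pairing, and we have just shown this functional is zero; the anti-holomorphic component is not asserted to vanish and is not part of the claim.

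The main obstacle I anticipate is the justification of the integration by parts itself, i.e. making rigorous the vanishing of the boundary terms near the cusp and on $\partial\mathcal D$. The interior integrability is fine (the integrand is smooth on $\mathcal D$), but $F$ need not be a cusp form, so a priori it grows, and one must check carefully that the exact $1$-form whose exterior derivative appears above is itself integrable and that Stokes' theorem applies on the non-compact domain $\mathcal D$ — the usual fix is to truncate $\mathcal D$ at height $y=T$, apply Stokes on the compact truncation $\mathcal D_T$, and let $T\to\infty$, checking that the truncation-boundary term at $y=T$ tends to $0$ using the exponential decay of $g$ against the polynomial-in-$y$ bound on $F$ coming from the pole filtration $P^{-r-s}$. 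A secondary point is bookkeeping the identification of weights: one must track that $\partial_r$ lands in $\mathcal M_{r+1,s-1}$, so that if $f=\partial F\in\mathcal M_{r,s}$ then $F\in\mathcal M_{r-1,s+1}$, and confirm that the relevant weight appearing in the $y$-power is exactly $r$ so that the adjoint of $\partial_{r-1}$ against $\overline g\,y^{r-2}$ produces the lowering operator applied to $\overline g$, which kills it because $g$ is holomorphic; this is where the precise normalisation of $\partial_r$ as $(z-\overline z)\partial_z + r$ (rather than the classical Maass operator) is used.
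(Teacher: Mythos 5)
Your argument is correct and is essentially the proof given for this statement (the present paper only recalls the theorem from \cite{ZagFest}, where it is proved): one rewrites $(\partial_{r-1}F)\,\overline{g}\,(z-\overline{z})^{r-2}\,dz\wedge d\overline{z}$ as the exterior derivative of the $\SL_2(\Z)$-invariant $1$-form $F\,\overline{g}\,(z-\overline{z})^{r-1}\,d\overline{z}$ — using holomorphy of $g$ and the fact that the $+(r-1)$ in $\partial_{r-1}$ matches the derivative of $(z-\overline{z})^{r-1}$ — and applies Stokes on the truncated fundamental domain, with the boundary terms cancelling by invariance and the cusp contribution vanishing because the exponential decay of $g$ beats the polynomial-in-$\LL$ growth of $F$. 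Your weight bookkeeping ($n=r$, $F\in\mathcal{M}_{r-1,s+1}$) and the truncation-at-height-$T$ justification are exactly the points the paper's proof relies on.
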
 
This can be written  $p^h \partial  =0$. By taking the  complex conjugate, $p^a   \overline{\partial}   = 0 $.

\subsection{Example: real analytic Eisenstein series} \label{sectRAEisenstein}
Recall the functions  $\mathcal{E}_{r,s}  \in  P^{-w}\,  \mathcal{M}_{r,s}$  defined in \eqref{introEisdefn}. They satisfy the system of differential equations
 \begin{eqnarray} \label{realanalholequation} 
 \partial \, \mathcal{E}_{w,0}   &= & \LL \GE_{w+2}     \\ 
 \partial \,\mathcal{E}_{r,s} -(r+1) \mathcal{E}_{r+1, s-1}  &= &  0   \qquad \qquad  \hbox{ for all  }1\leq s\leq w \  \nonumber 
\end{eqnarray} 
and 
\begin{eqnarray} \label{realanalantiholequation} 
\overline{\partial} \, \mathcal{E}_{0,w}   &= & \LL  \overline{\GE}_{w+2}     \\ 
 \overline{\partial} \,\mathcal{E}_{r,s} - (s+1) \mathcal{E}_{r-1, s+1}   &= &   0  \qquad  \qquad \hbox{ for all  } 1\leq r \leq w \  .  \  \nonumber 
\end{eqnarray} 
They are eigenfunctions of the Laplacian: $\Delta \mathcal{E}_{r,s} =  -  w \, \mathcal{E}_{r,s}$, with constant part 
\begin{equation}  \label{constantpartofRealEis}
\mathcal{E}_{r,s}^0 =  {  - B_{w+2} \over  2(w+1)(w+2)} \LL      +      {(-1)^s \over 2} {w! \over 2^w}\binom{w}{r}     \zeta(w+1)  \LL^{-w}   
\end{equation}
This example is fairly atypical - in general the differential equations with respect to the holomorphic and anti-holomorphic differentials $\partial$ and $\overline{\partial}$ will not be symmetric. The theory is set up in  such a manner as to make the differential structure with respect to $\partial$ as simple as possible, at the cost of losing explicit control over the action of $\overline{\partial}$.   This is entirely analogous to the situation in genus $0$, since $\Lo$ satisfies a complicated differential equation with respect to $\frac{\partial}{\partial \overline{z}}$ whose coefficients involve multiple zeta values.

Let us write 
$$\mathcal{E}= \sum_{r+s=2w}  \mathcal{E}_{r,s} (X- \tau Y)^r (X-\overline{\tau }Y)^s \ . $$
It is equivariant: $\mathcal{E}(\gamma \tau) = \mathcal{E}|_{\gamma}$ for all  $\gamma \in \SL_2(\Z)$.  Consider   the equivariant 1-form 
\begin{equation} \label{Eunderlinefirstdef} \underline{E}_{2w+2}(\tau) = 2 \pi i \, \GE_{2w+2}(\tau) (X- \tau Y)^{2w} d\tau \ . 
\end{equation}
The normalisation of the power of $2 \pi i$ in this expression can be different in different contexts \cite{MMV}. 
 Equations $(\ref{realanalholequation})$ and $(\ref{realanalantiholequation})$ are equivalent to the differential equation:  
 $$d \mathcal{E}    =   { 1 \over 2}  \Big( \underline{E}_{2w+2}(\tau)   + \overline{\underline{E}_{2w+2}(\tau)}  \Big)  =   \mathrm{Re} \, \big(  \underline{E}_{2w+2}(\tau)  \big)  $$
The functions $\mathcal{E}_{r,s}$ are  thus obtained from the real part of an indefinite integral of  a holomorphic Eisenstein series. This is the prototype for the general theory. 

\section{Reminders on $\SL_2$-representations} \label{sectSL2}
Throughout this paper,  all tensors are over $\Q$ unless otherwise indicated.

\subsection{Definitions}
For all $n \geq 0$ define
$V_{2n} = \bigoplus_{r+s = 2n} X^r Y^s \Q$, 
equipped with the right action of $\SL_2$, and hence  $\SL_2(\Z)$, given by 
$$(X,Y)\big|_{\gamma} =   (aX+ bY, cX+ dY)$$
for $\gamma  $ of the form  $(\ref{gammaact})$. There is an isomorphism  of $\SL_2$-representations
$$V_{2m} \otimes V_{2n} \cong V_{2m+2n} \oplus V_{2m+2n-2} \oplus \ldots \oplus V_{2|m-n|}\ .$$
Consider the $\SL_2$-equivariant  projector 
\begin{equation}  \label{deltakdef}
\delta^k  : V_{2m} \otimes V_{2n} \To V_{2m+2n- 2k}
\end{equation} 
defined by 
$$\delta^k =   \mu   \circ \Big( {\partial \over \partial X} \otimes {\partial \over \partial Y} - {\partial \over \partial Y}\otimes {\partial \over \partial X}  \Big)^k  $$
where $\mu: \Q[X,Y] \otimes  \Q[X,Y]  \rightarrow  \Q[X,Y]$ is the multiplication map.

Recall the standard notation for generators of $\SL_2(\Z)$: 
\begin{equation} \label{SandTdef}
S= 
\left(
\begin{array}{cc}
  0   & -1  \\
   1  &   0 
\end{array}
\right)\quad, \quad  T= \left(
\begin{array}{cc}
  1   &  1  \\
   0  &   1 
\end{array}
\right)
\ .
\end{equation} 

\subsection{de Rham version} 
It is convenient to define a de Rham version $V_{2n}^{dR}$ of the (Betti) vector space $V_{2n}$ generated by elements denoted by $\Xv$ and $\Yv$. 

There is a comparison isomorphism
\begin{align}  \label{VBdRcomp}
V^{dR}_{2n} \otimes \C  \quad  &  \overset{\sim}{\To} \quad     V_{2n} \otimes \C   \\
(\Xv,\Yv)   \quad  &  \mapsto \quad      (X ,  (2 \pi i)^{-1} Y) \nonumber
\end{align}
The reason for this is that $X$ and $Y$  will span copies of $\Q(0)$ and $\Q(1)$ respectively: for example,  $X$ is a Betti basis for $\Q(0)$ and $\Xv$ a de Rham basis.

The vector space $V^{dR}_{2n}$ admits a (de Rham) action of $\SL_2(\Z)$ on the right via
$$(\Xv,\Yv)\big|_{\gamma} =   (a\Xv+ b\Yv, c\Xv+ d\Yv)$$
for $\gamma  $ of the form  $(\ref{gammaact})$. When the context is not clear, we shall denote this copy of $\SL_2$ by $\SL_2^{dR}$.  The comparison isomorphism  is an isomorphism of group schemes
$$
\comp_{B,dR}: \SL_2 \times \C  \overset{\sim}{\To}  \SL_2^{dR}\times \C $$
which on the level of points is given by conjugation by $ ( \begin{smallmatrix} 1&  0 \\ 0& (2  \pi i)^{-1} \end{smallmatrix})$:
\begin{equation}
\label{SL2dRversusB}
\comp_{B,dR} \begin{pmatrix} a& b \\ c& d \end{pmatrix}   \quad  =  \quad    \begin{pmatrix} a&   2 \pi  i b \\   (2 \pi i)^{-1}   c& d \end{pmatrix} 
\end{equation}
Consequently, the  Betti action of $\SL_2(\Z)$ on $V^{dR}_{2n}\otimes \C$ is twisted by powers of $2\pi i$. The images of  the Betti elements $S$ and $T$ under $\comp_{B, dR}$  are
\begin{equation} \label{SandTdeRhamdef}
S'= 
\left(
\begin{array}{cc}
  0   & - 2\pi i   \\
  (2\pi i)^{-1}  &   0 
\end{array}
\right)\quad, \quad   T'= \left(
\begin{array}{cc}
  1   &  2 \pi i   \\
   0  &   1 
\end{array}
\right)
\ .
\end{equation}  
If bar denotes complex conjugation, these satisfy $\overline{S'} =  - S'$  and $\overline{T'} = (T')^{-1}$.  

\begin{rem}  \label{remdeltadR}
There is a de Rham version of the projection  $\delta_{dR}: V^{dR}_{2m} \otimes V^{dR}_{2n}\rightarrow V_{2n+2m-k}$ defined in an identical manner
to (\ref{deltakdef}), except that we replace $X,Y$ with $\Xv, \Yv$.  Under the comparison isomorphism,  $\delta_{dR}^k$ corresponds to $(2\pi i)^k \delta^k$.
\end{rem}

\section{Iterated integrals of Eisenstein series}

\subsection{Preamble on filtrations} 
In order to keep the paper as accessible as possible, we work mostly with Lie algebras and iterated integrals and gloss over the geometric foundations.
From time to time, a paragraph marked with a star explains the geometric background with references for the interested reader.  Only in \S\ref{sect: ComplexConjugation}, which is considerably more technical, do we require any substantial  Hodge theory and Tannakian theory of fundamental groups and their completions.  As a result,  the 
objects described in this and later sections are equipped with a limiting mixed Hodge structure and  in particular,   possess three filtrations: $W, M$ and $F$.  Since all mixed Hodge structures considered here are of mixed Tate type, 
 the monodromy-weight filtration $M$ is split in the de Rham realisation by the Hodge filtration $F$, and is therefore associated to a grading  which we call the $M$-degree. It also determines $F$, which will not be discussed again. 
The geometric weight filtration $W$ plays a relatively minor  role in this paper. Indeed,  it is canonically split in the de Rham realisation, and the $W$-degrees can be deduced from the $M$-degrees and  $\ssl_2$-weights.  It is related to the depth filtration \cite{Sigma}, which is briefly discussed in \S\ref{sect: Relations}.  For this reason, we shall emphasize  the $M$-degree, and the interested reader can look up the $W$-filtrations in \cite{MEM, MMV}.

\subsection{A tensor algebra} 
 The generators $\Xv, \Yv$ of   $V^{dR}_{2n}$  satisfy
\begin{equation}\label{degsXY} \deg_M \Xv=0 \quad \ , \quad \deg_M \Yv=-1 \ . 
\end{equation}

Let $\e_{2n+2}$ be a symbol which corresponds to the Eisenstein series of weight $2n+2$. It is assigned  an $M$-degree of $-1$ (it spans a copy of $\Q(1)$).
Our convention is that typeface in sans serif font denote elements in a de Rham realisation.  

\begin{defn} Let $\uu^{dR}_E$ denote the free graded Lie algebra generated by elements
$$\e_{2n+2} \Xv^i \Yv^{2n-i}  \quad \hbox{ for } \quad  0 \leq i \leq 2n \ $$
and for all $n\geq 1$.  Equivalently, it is the free graded Lie algebra generated by $\e_{2n+2}V^{dR}_{2n}$, for every $n\geq 1$.
It is naturally equipped with a right action of $\SL^{dR}_2(\Z)$.
\end{defn} 
The completed Lie algebra $\widehat{\uu}^{dR}_E$ is the Lie algebra of a pro-unipotent affine group scheme $\U^{dR}_E$ over $\Q$.  We remark straight away that this 
object is not natural from a geometric perspective (it is not the de Rham realisation of a motive), but will be very convenient for computational   purposes. 
 Its affine ring is the tensor coalgebra
$$\Or(\U^{dR}_E) = T^c   \big(\bigoplus_{n\geq 1} \EE_{2n+2}\dV^{dR}_{2n} \big)$$
where $\EE_{2n+2}$ are symbols dual to $\e_{2n+2}$, and where for any graded vector space  $W$, the tensor coalgebra is defined to be the bigraded vector space
$$  T^c(W) = \bigoplus_{n\geq 0} W^{\otimes n} \ ,$$
equipped with the shuffle product $\sha$, and  the deconcatenation coproduct 
\begin{eqnarray} \Delta: T^c (W) & \To & T^c(W) \otimes T^c(W)    \nonumber \\  
\Delta (w_1\otimes \ldots \otimes w_n )  &=&  \sum_{0\leq i\leq n} (w_1 \otimes \ldots \otimes w_i) \otimes  (w_{i+1} \otimes \ldots \otimes w_n) \ . \nonumber 
\end{eqnarray} 
The ring $\Or(\U^{dR}_E)$ and  the group scheme $\U^{dR}_E$ inherit a left (respectively right) action of $\SL^{dR}_2$. 
Let us denote also by  $\widehat{\U}^{dR}_E$ the completed universal envelopping algebra of $\uu^{dR}_E$. It is the   ring of non-commutative
formal power series in $\e_{2n+2} \Xv^i \Yv^{2n-i}$, for $0\leq i\leq 2n$.   We can view the points of $\U^{dR}_E$ as the set of group like elements in $\widehat{\U}^{dR}_E$. 

\subsection{Power series connection} Recall (\ref{Eunderlinefirstdef}) that 
\begin{eqnarray}  \label{EunderlineasGE} \underline{E}_{2n+2}(\tau) & = &  2 \pi i \, \GE_{2n+2}(\tau) (X- \tau Y)^{2n} d\tau  \\
& \overset{(\ref{VBdRcomp})}{=} &  \GE_{2n+2}(q) (\Xv- \log (q)  \Yv)^{2n}  { dq \over q} \ .\nonumber 
\end{eqnarray} 
The second expression  has only rational coefficients in $q, \log(q)$.

Consider the following formal one-form taking values in $ \Omega^1(\HH)  \otimes \widehat{\U}^{dR}_E  $
\begin{equation} \label{OmegaEdef}
\Omega^{E}(\tau) = \sum_{n\geq 1 }   \underline{E}_{2n+2}(\tau) \e_{2n+2}
\end{equation} 
It is equivariant with respect to the (Betti) action of $\SL_2(\Z)$: 
\begin{equation} \nonumber
\Omega^E(\gamma  \tau ) \big|_{\gamma} = \Omega^E(\tau)  \quad \hbox{ for all } \quad  \gamma \in \SL_2(\Z) \ .
\end{equation} 
Here, and throughout this section,  the (Betti) action of $\SL_2(\Z)$ on the de Rham group $\U^{dR}_E(\C)$ is via the  comparison map (\ref{SL2dRversusB}).
Consider the trivial vector bundle $\widehat{\U}^E\otimes  \C$ over $\HH$, equipped with the connection 
$$\nabla  : \widehat{\U}^{dR}_E\otimes \C \To  \Omega^1(\HH)  \otimes \widehat{\U}^{dR}_E  $$
defined by $\nabla = d + \Omega^E$, 
where the elements $\e_{2n+2}$ act on $ \widehat{\U}^{dR}_E$ by concatenation on the left.  This connection is integrable since $d\,\Omega^E= \Omega^E \wedge \Omega^E= 0$.

\begin{prop} \label{propIE}  There exists a canonical horizontal section, 
$$I^E :  \HH \To  \widehat{\U}^{dR}_E(\C)$$
which is regularised at the unit tangent vector $\tone_{\infty}$ at the cusp.  It is given explicitly by the generating series of iterated integrals
\begin{eqnarray} I^E(\tau) &  = & 1 +  \int_{\tau}^{\tone_{\infty}} \Omega^E +    \int_{\tau}^{\tone_{\infty}} \Omega^E  \Omega^E + \ldots  \label{IEitint} \\ 
 &=& \sum_{r\geq0}  \sum_{n_1,\ldots, n_r \geq 1}  \e_{2n_1+2} \ldots \e_{2n_r+2}  \int_{\tau}^{\tone_{\infty}}  \underline{E}_{2n_1+2}(\tau) \ldots \underline{E}_{2n_r+2}(\tau)  \nonumber
 \end{eqnarray}
 where we integrate from the left.   It satisfies the following properties:
 \begin{eqnarray} 
(i) & & d I^E(\tau)  \   =  \ - \Omega^E(\tau) I^E(\tau)     \nonumber \\
(ii) & & I^E(\tone_{\infty}) \  =  \ 1  \nonumber \\
(iii) & & I^E(\tau)  \ \in \  \U^{dR}_E(\C) \qquad \hbox{ for all } \tau \in \HH  \ .\nonumber 
\end{eqnarray} 
Furthermore, the coefficient  of $\e_{2n_1+2} \ldots \e_{2n_r+2} \Xv^{i_1} \Yv^{2n_1-i_1} \otimes \ldots \otimes \Xv^{i_r} \Yv^{2n_r-i_r}  $  in $I^E(\tau)$ is an element of the ring
$\C [[q]][\log (q) ] $
of degree at most $2(n_1+\ldots +n_r) +r$  in $\log(q)$. 

\end{prop}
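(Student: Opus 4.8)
The plan is to construct $I^E$ directly as the generating series of iterated integrals displayed in \eqref{IEitint}, read off its properties, and then prove the final assertion about the degree in $\log(q)$ by a direct analysis of the individual iterated integrals. First I would recall that for a closed one-form $\Omega$ with coefficients in a complete noncommutative algebra, the generating series of iterated integrals $1 + \int \Omega + \int \Omega \Omega + \cdots$ gives a horizontal section of $\nabla = d + \Omega$; here the key point is that the regularisation at the cusp makes sense because $\Omega^E$ has at worst a logarithmic singularity at $\tau \to i\infty$ (equivalently $q\to 0$), owing to the constant terms $-B_{2n+2}/(4n+4)$ of the Eisenstein series $\GE_{2n+2}$. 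Tangential base point regularisation of iterated integrals against logarithmic singularities is standard (Deligne, and in this elliptic context \cite{CEE, BrLevin}), and it yields a well-defined section satisfying $(ii)$. Property $(i)$ is then the defining differential equation of the generating series, and $(iii)$ — group-likeness — follows from the shuffle-product formula for iterated integrals: the coefficients satisfy the shuffle relations, which is precisely the statement that $I^E(\tau)$ lies in $\U^{dR}_E(\C)$, the group of grouplike elements in $\widehat{\U}^{dR}_E$.

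The substantive part is the last sentence, the bound on the $\log(q)$-degree. First I would change variables from $\tau$ to $q = e^{2\pi i \tau}$, so that $d\tau/2\pi i = dq/(2\pi i q)$ and, by \eqref{EunderlineasGE}, the relevant one-form $\underline{E}_{2n+2}$ becomes $\GE_{2n+2}(q)(\Xv - \log(q)\Yv)^{2n}\,dq/q$, which has rational coefficients in $q$ and is a polynomial of degree exactly $2n$ in $\log(q)$; here $\GE_{2n+2}(q) \in \Q[[q]]$. The coefficient of a word $\e_{2n_1+2}\cdots\e_{2n_r+2}\Xv^{i_1}\Yv^{2n_1 - i_1}\otimes\cdots\otimes\Xv^{i_r}\Yv^{2n_r - i_r}$ in $I^E(\tau)$ is then an iterated integral
\begin{equation} \nonumber
\int_{q}^{0} \varphi_1(t_1)(\log t_1)^{a_1}\frac{dt_1}{t_1}\cdots \varphi_r(t_r)(\log t_r)^{a_r}\frac{dt_r}{t_r}\ ,
\end{equation}
where each $\varphi_j(t) \in \Q[[t]]$ (a coefficient of $\GE_{2n_j+2}$ times a binomial coefficient) and $0 \le a_j \le 2n_j$, regularised at $0$. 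The claim is that such an integral lies in $\C[[q]][\log q]$ of degree at most $\sum_j(2n_j + 1) = 2(n_1 + \cdots + n_r) + r$ in $\log q$.

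I would prove this by induction on $r$, the depth of iteration. For $r = 1$, $\int_q^0 \varphi(t)(\log t)^a\,dt/t$ with $\varphi(t) = \sum_{m\ge 0} c_m t^m$: the $m \ge 1$ terms integrate (by repeated integration by parts, or by the formula $\int t^{m-1}(\log t)^a\,dt = t^m \sum_{j=0}^{a} (-1)^{a-j}\binom{a}{j}\frac{a!}{j!}\,m^{-(a-j+1)}(\log t)^j / \,$) to elements of $q\,\C[[q]][\log q]$ of $\log q$-degree $\le a$; the $m = 0$ term $\int (\log t)^a\,dt/t = (\log t)^{a+1}/(a+1)$ contributes the tangential-regularised value $(\log q)^{a+1}/(a+1)$, of degree $a + 1 \le 2n_1 + 1$. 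So the $r = 1$ bound holds. For the inductive step, write $I^E(\tau)$ as the solution of $dI = -\Omega^E I$ regularised at the cusp; the coefficient $F_w(q)$ of a length-$r$ word $w = \e_{2n_1+2}u_1\cdots$ satisfies $dF_w = -\underline{E}_{2n_1+2}^{u_1}(q)\, F_{w'}(q)$ where $w'$ is $w$ with its first letter removed and $\underline{E}^{u_1}$ is the scalar component of $\underline{E}_{2n_1+2}$ attached to the monomial $u_1$, a polynomial in $\log q$ of degree $\le 2n_1$ with coefficients in $\Q[[q]]$. By induction $F_{w'} \in \C[[q]][\log q]$ has degree $\le 2(n_2 + \cdots + n_r) + (r-1)$; multiplying by $\underline{E}^{u_1}$ and integrating once — again splitting off the $q^0$ term of the integrand, which raises the $\log q$-degree by exactly one, while the $q^{\ge 1}$ part preserves it — yields $F_w \in \C[[q]][\log q]$ of degree at most $2n_1 + (2(n_2 + \cdots + n_r) + (r-1)) + 1 = 2(n_1 + \cdots + n_r) + r$, as required. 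The regularisation of the indefinite integral at $q = 0$ is the usual prescription: drop the divergent $(\log q)^{k+1}$ antiderivative of the constant term and retain the convergent part, which is consistent with the tangential base point $\tone_\infty$.

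The main obstacle I anticipate is bookkeeping around the regularisation: one must check that the tangentially-regularised iterated integral is genuinely a polynomial in $\log q$ (with convergent power-series coefficients) of exactly the claimed degree bound, and that the regularisation commutes with the inductive "integrate once" step — i.e. that regularising the length-$r$ integral agrees with integrating the regularised length-$(r-1)$ integral and then regularising. This is where the shuffle-regularisation formalism (path-composition and the behaviour of iterated integrals under regularisation, as in Deligne's and Racinet's treatment, adapted to the elliptic setting in \cite{BrLevin}) does the work; everything else is the elementary integration-by-parts estimate above. The degree bound $2(n_1 + \cdots + n_r) + r$ is then tight, realised by the $q^0$-coefficients of all the $\GE_{2n_j+2}$, which are the Bernoulli-number constant terms.
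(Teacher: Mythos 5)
Your proposal is correct and follows essentially the same route as the paper: parts (i)--(iii) are the standard facts about regularised generating series of iterated integrals at a tangential base point (which the paper simply cites from the companion work on multiple modular values), and the $\log(q)$-degree bound is obtained exactly as in the paper, by observing that $\C[[q]][\log q]$ is closed under taking primitives with respect to $dq/q$, with the degree rising by at most $1$ per integration (from the $q^0$ term) and by at most $2n_j$ from each factor $\underline{E}_{2n_j+2}$. Your explicit induction on the length $r$ is just the unpacked version of the paper's one-line argument.
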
 
\begin{proof} The definition of the tangential base point $\tone_{\infty}$ is given in \cite{MMV} \S2, as well as a concrete explanation of the regularised iterated integrals.  Parts $(i)$, $(ii)$ and $(iii)$ are proved in \cite{MMV}, propositions 1  and 2. Only the last part remains to be proven.  For this, observe that the
ring $\C[[q]][\log (q) ] $ is closed under primitives with respect to $q^{-1} dq $, i.e., if 
$f\in \C[[q]][\log (q) ] $, then there exists $F\in \C[[q]][\log (q) ]$ such that $dF = q^{-1}f dq$.
 Furthermore, 
  a  primitive of $ q^i \log^j (q) \, dq$ is again a polynomial in $q$ and $\log q$ of degree at most $j+1$ in $\log(q)$.  Since the coefficients of $\underline{E}_{2n+2}(\tau)$ lie in $\C[[q]][\log(q)]dq$ and have degree at most $2n$ in $\log q$, the statement follows.
\end{proof} 
Properties $(i)$ and $(ii)$ in the previous proposition determine $I^E(\tau)$ uniquely. Property $(iii)$ is equivalent to a shuffle product relation between iterated integrals of Eisenstein series, which is spelled out  in \cite{MMV} (3.8). 

\begin{cor}
For every $\gamma \in \SL_2(\Z)$, there is a unique  $\CC^E_{\gamma} \in \U^{dR}_E(\C)$ such that
\begin{equation} 
I^E(\gamma \tau ) \big|_{\gamma} \CC^E_{\gamma} = I^E(\tau)
\end{equation}
for all $\tau \in \HH$. It satisfies the cocycle equation 
\begin{equation}
\CC^E_{ g h } = \CC^E_g \big|_{h} \CC^E_h \qquad \hbox{ for all } g , h \in \SL_2(\Z) \ .
\end{equation} 
\end{cor}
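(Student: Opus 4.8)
The plan is to exploit the fact that $I^E$ is the (regularised) horizontal section of an integrable connection and is therefore unique up to right multiplication by a constant element of $\U^{dR}_E(\C)$. First I would fix $\gamma \in \SL_2(\Z)$ and consider the function $\tau \mapsto I^E(\gamma \tau)\big|_{\gamma}$. Using the equivariance $\Omega^E(\gamma\tau)\big|_{\gamma} = \Omega^E(\tau)$ from the line above \eqref{OmegaEdef}, together with property $(i)$ of Proposition~\ref{propIE}, one checks by the chain rule that $I^E(\gamma\tau)\big|_{\gamma}$ satisfies the same differential equation $d\big(I^E(\gamma\tau)\big|_{\gamma}\big) = -\Omega^E(\tau)\,I^E(\gamma\tau)\big|_{\gamma}$ as $I^E(\tau)$; here one uses that the action of $\gamma$ is $\C$-linear and commutes with the exterior derivative, and that pulling back $\Omega^E$ along $\gamma$ and then applying $|_\gamma$ returns $\Omega^E$. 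Since both $I^E(\gamma\tau)\big|_{\gamma}$ and $I^E(\tau)$ are horizontal sections of the same connection $\nabla = d + \Omega^E$ on the simply connected domain $\HH$, they differ by right multiplication by a locally constant, hence (by connectedness) globally constant, element; call it $\CC^E_\gamma$, so that $I^E(\gamma\tau)\big|_{\gamma}\,\CC^E_\gamma = I^E(\tau)$. Both sides lie in the group $\U^{dR}_E(\C)$ by property $(iii)$, and since $I^E(\gamma\tau)\big|_{\gamma}$ is again group-like (the action of $\SL_2$ on $\widehat{\U}^{dR}_E$ is by Hopf algebra automorphisms), $\CC^E_\gamma$ lies in $\U^{dR}_E(\C)$ as well. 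Uniqueness is immediate: if $a, a' \in \U^{dR}_E(\C)$ both satisfy the identity then $I^E(\tau)\,a^{-1}a' = I^E(\tau)$ for all $\tau$, and evaluating at $\tau = \tone_\infty$ using $(ii)$ gives $a = a'$.

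For the cocycle relation, I would simply iterate the defining identity. Given $g, h \in \SL_2(\Z)$, write
\begin{align}
I^E(gh\,\tau) &= I^E\big(g(h\tau)\big) = I^E(h\tau)\big|_{g^{-1}}\big|\cdots \nonumber
\end{align}
more cleanly: apply the defining relation for $g$ at the point $h\tau$, giving $I^E(g(h\tau))\big|_{g}\,\CC^E_g = I^E(h\tau)$, then apply $|_h$ on the right and the defining relation for $h$, namely $I^E(h\tau)\big|_h = I^E(\tau)\,(\CC^E_h)^{-1}$. Substituting and using that $|_{g}$ followed by $|_{h}$ equals $|_{gh}$ (the right action is a left action composed correctly — here one must be careful that $(v|_g)|_h = v|_{gh}$, which holds for the conventions fixed in \S\ref{sectSL2}), one obtains $I^E(gh\,\tau)\big|_{gh}\,\big(\CC^E_g\big|_h\,\CC^E_h\big) = I^E(\tau)$. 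Comparing with the defining relation for $gh$ and invoking the uniqueness just established yields $\CC^E_{gh} = \CC^E_g\big|_h\,\CC^E_h$.

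The one point that genuinely requires care — and the only place I expect any friction — is the bookkeeping of the $\SL_2$-action: verifying that $\Omega^E$ really is invariant in the twisted sense used here (the Betti action on $\U^{dR}_E(\C)$ via the comparison \eqref{SL2dRversusB}), and that the two compositions of restriction operators compose in the order $(\,\cdot\,)\big|_{g}\big|_{h} = (\,\cdot\,)\big|_{gh}$ rather than $\big|_{hg}$. This is purely a matter of tracking conventions from \S\ref{sectSL2} and the definition of $\Omega^E$; once the conventions are pinned down, both the existence-uniqueness step and the cocycle computation are formal. No analysis beyond the uniqueness of horizontal sections on a simply connected base is needed.
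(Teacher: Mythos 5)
Your argument is correct and is exactly the standard one: the paper's proof simply cites \cite{MMV}, Lemma 5.1 (noting the twisted Betti action via the comparison map), and that lemma proceeds by the same route you take — both $I^E(\gamma\tau)\big|_{\gamma}$ and $I^E(\tau)$ are horizontal for the same integrable connection on the simply connected $\HH$, hence differ by a constant group-like factor, with uniqueness and the cocycle identity following formally. Your care about the order of composition of the right action and the $2\pi i$-twisted action \eqref{SL2dRversusB} is precisely the caveat the paper itself flags.
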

\begin{proof}  Identical to \cite{MMV}  lemma 5.1, bearing in mind that the right action of $\SL_2(\Z)$ on $\U^{dR}_E(\C)$ is given by (\ref{SL2dRversusB}).
\end{proof} 

The map  $\gamma \mapsto \CC^E_{\gamma}$ defines a non-abelian cocycle 
$$\CC^E  \quad \in \quad Z^1 (\SL_2(\Z), \U^{dR}_E(\C)) \ .$$
 Since $\SL_2(\Z)$ is finitely presented, and  generated by the two elements $T, S$, any cocycle $\CC$
is uniquely determined (\cite{MMV} lemma 5.5) by its values $\CC_S$ and $\CC_T$, which satisfy
\begin{eqnarray}
\CC_S\big|_S \, \CC_S  &= &1  \nonumber \\
\CC_U\big|_{U^2}  \, \CC_U\big|_U \, \CC_U & = & 1 \ , \nonumber 
\end{eqnarray} 
where $U= TS$ and $\CC_U = \CC_T\big|_S \CC_S$. 
 The  series $\CC^E_T$ was computed explicitly in \cite{MMV} \S6, and its coefficients  involve only  powers of $2 \pi i$. The coefficients of $\CC^E_S$  are  numbers which we called multiple modular values in \cite{MMV}.

\subsubsection{Geometric background*}
The affine group scheme $\U^{dR}_E$ is not a good object in the sense that it does not admit a natural mixed Hodge structure.
A closely related object  is  the de Rham relative completion  of the fundamental group of $\mathfrak{M}_{1,1}$ with respect to the unit tangent vector at the cusp \cite{HaGPS, MMV}.  The group scheme $\U^{dR}_E$ is a  quotient  of it (but not in the category of mixed Hodge structures).  The reason for working with  $\U^{dR}_E$ is for simplicity of exposition, since under the monodromy homomorphism,  discussed below, all generators of  relative completion  which correspond to cusp forms act trivially, leaving only the Eisenstein series. These  are artificially captured by $\U^{dR}_E$.

\subsection{Totally holomorphic iterated integrals} In this section we work over the ring of complex numbers, and all tensors are over $\C$. 
Consider the tensor coalgebra $$T^c \big(\bigoplus_{n \geq 1} M_{2n+2}(\C)\otimes V^{\vee}_{2n} \big)$$ where $M_{2n+2}(\C)$ denotes the complex vector space of holomorphic modular forms of weight $2n+2$. Elements in this coalgebra define  `totally holomorphic' iterated integrals on $\HH$. They   are linear combinations of the following iterated integrals  
\begin{equation} \label{itintgeneral} \int_{\tau}^{\tau_0}  \omega_1 \ldots \omega_r 
\end{equation}
where $\omega_k $ are of the form  $(2\pi i) \tau^{m_k} f_k(\tau)d\tau $ with $f_k\in M_{2n_k+2}(\C)$ and $0\leq m_k\leq 2n_k$. 
For $\tau_0$ a finite base point these were considered by Manin  Manin \cite{Ma1, Ma2}.
 The previous expression is the  coefficient of $\pm X^{2n_1-m_1}Y^{m_1}\otimes \ldots \otimes  X^{2n_r- m_r}Y^{m_r}$ in 
$$\int_{\tau}^{\tau_0} \underline{f_1}(\tau) \ldots \underline{f_r}(\tau)\ ,$$
where for any modular form $f \in M_{2n+2}(\C)$ of weight $2n+2$, we write
$$\underline{f}(\tau) =  2 \pi i f(\tau) (X - \tau Y)^{2n} d \tau$$
A different normalisation of the power of $2\pi i$ was used in the first chapter of  \cite{MMV}. 
From the definition of iterated integrals one has:
$${ d \over d \tau} \int_{\tau}^{\tau_0}  \omega_1 \ldots \omega_r 
 = - \omega_1    \int_{\tau}^{\tau_0}  \omega_2 \ldots \omega_r \ . $$
The corresponding integrals   regularised with respect to a tangent vector at the cusp were studied in \cite{MMV} \S4.

\begin{prop} Let $\tau_0$ be any (possibly tangential) basepoint  on $\HH$.  The  iterated integration  homomorphism (with respect to the shuffle product on the tensor coalgebra): 
$$ \int_{\tau}^{\tau_0} : T^c(\bigoplus_{n \geq 1} M_{2n+2}(\C)\otimes V^{\vee}_{2n}) \To \hbox{ holomorphic functions   of } \tau \in  \HH$$
is injective. 
It follows that iterated integrals are linearly independent over the ring  $M[\tau]$ generated by  holomorphic modular forms and the function $\tau$.
\end{prop}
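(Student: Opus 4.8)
The plan is to prove injectivity of the iterated integration homomorphism by the standard method for showing linear independence of iterated integrals: a downward induction on the length (tensor degree), using differentiation to reduce the length and the (known) linear independence of the integrand one-forms. Concretely, suppose $\Phi = \sum_w c_w \, w$ is a nonzero element of $T^c(\bigoplus_{n\geq 1} M_{2n+2}(\C)\otimes V^\vee_{2n})$ that maps to the zero function; write $N$ for its top nonzero tensor degree. Applying $\tfrac{d}{d\tau}$ to the identity $\int_\tau^{\tau_0}\Phi = 0$ and using $\tfrac{d}{d\tau}\int_\tau^{\tau_0}\omega_1\cdots\omega_r = -\omega_1\int_\tau^{\tau_0}\omega_2\cdots\omega_r$ (which extends to the tensor coalgebra: $\tfrac{d}{d\tau}\int_\tau^{\tau_0}\Phi = -\,\omega\bigl(\int_\tau^{\tau_0}\,\cdot\,\bigr)$ applied to the left-deconcatenation $(\mathrm{id}\otimes\int)\Delta'\Phi$, where $\Delta'$ strips the first letter), one obtains a relation $\sum_k \eta_k(\tau)\, F_k(\tau) = 0$ among holomorphic functions, where the $\eta_k$ run over a basis of the finite-dimensional space $\bigoplus_n M_{2n+2}(\C)\otimes V^\vee_{2n}$ of possible one-forms (up to the $d\tau$), and each $F_k = \int_\tau^{\tau_0}\Phi_k$ with $\Phi_k$ of tensor degree $\leq N-1$.

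The key step is then to argue that the one-forms $\eta_k\,d\tau$ are linearly independent over the ring of holomorphic functions spanned by the $F_k$ — more precisely, that a relation $\sum_k \eta_k F_k = 0$ forces each $F_k \equiv 0$ — after which the inductive hypothesis (injectivity in tensor degree $\leq N-1$) gives $\Phi_k = 0$ for all $k$, hence $\Phi$ itself has tensor degree $< N$, a contradiction unless $\Phi = 0$. For the base case, tensor degree $0$, the map sends a scalar $c$ to the constant function $c$, which is injective. The linear-independence-of-one-forms step is where the real content lies: the functions $\eta_k(\tau) = \tau^{m}f(\tau)$ with $f$ a holomorphic modular form of weight $2n+2$ and $0\leq m\leq 2n$ are linearly independent over $\C$ (this is the statement that modular forms of distinct weights, and the monomials $\tau^m$ against a fixed modular form, are independent — itself provable by looking at $q$-expansions together with the transcendence/non-algebraicity of $\tau$ over the field of modular functions, or more cleanly by the $\mathfrak{sl}_2$-weight grading: $f(\tau)(X-\tau Y)^{2n}d\tau$ for varying $n$ live in different $V_{2n}$, and within a fixed $n$ the coefficients of $X^{2n-m}Y^m$ are the $\binom{2n}{m}(-\tau)^m f(\tau)$, manifestly independent). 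Since the $F_k$ are holomorphic and the $\eta_k$ are holomorphic and $\C$-linearly independent, and since — crucially — the $F_k$ cannot all conspire to cancel a nontrivial $\C$-combination of the $\eta_k$ unless they vanish: one picks the index $k_0$ with $F_{k_0}$ having lowest order of vanishing in $q$ at the cusp (or lowest order at a generic point $\tau_1\in\HH$), and reads off from the leading term that the corresponding $\eta_{k_0}$-contribution is unmatched, forcing $F_{k_0}\equiv 0$, then iterating.

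I expect the main obstacle to be making the last cancellation argument fully rigorous in the presence of the tangential base point $\tau_0$ and the logarithmic singularities at the cusp (the regularised iterated integrals are polynomials in $\log q = 2\pi i\tau$ with $q$-series coefficients, by the analogue of Proposition~\ref{propIE}(last part)). The cleanest route is probably to work not at the cusp but at a generic interior point: restrict all functions to a small disc around some $\tau_1\in\HH$ where every $F_k$ is an honest convergent power series in $\tau-\tau_1$, argue by comparing lowest-order terms there, and note that a holomorphic function on $\HH$ vanishing on an open set vanishes identically. Then the downward induction runs cleanly. An alternative, perhaps slicker, formulation: use the fact that iterated integrals of length $\leq N$ form a Hopf-algebra-comodule under $\tfrac{d}{d\tau}$, that the coaction is injective on the degree-$N$ part (deconcatenation of the leading letter), and invoke the $\mathfrak{sl}_2$-weight decomposition to separate out the $V_{2n}$-isotypic pieces before running the induction — this reduces everything to the single classical fact that $M[\tau]$-linear combinations of iterated integrals built from a $\C$-linearly-independent family of one-forms are independent, which is Chen's theorem in the form suited to this setting. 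The final sentence of the statement — linear independence over $M[\tau]$ — then follows formally, since any $M[\tau]$-linear relation among iterated integrals can be expanded out into a $\C$-linear relation among iterated integrals of possibly higher length (using that $M[\tau]$-multiples are themselves length-$1$ iterated-integral coefficients times constants, via $\tfrac{d}{d\tau}$ of $\int\underline{f}$, or simply by absorbing $\tau^m f(\tau)$ as a new "letter"), reducing it to the injectivity already proved.
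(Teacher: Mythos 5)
There is a genuine gap at the heart of your induction, precisely at the step you yourself flag as ``where the real content lies''. Deducing from $\sum_k \eta_k(\tau)\,F_k(\tau) = 0$ that each $F_k \equiv 0$ does not follow from $\C$-linear independence of the one-forms $\eta_k\,d\tau$, which is all you establish. What is needed is linear independence of the $\eta_k$ over the ring generated by the lower-length iterated integrals $F_k$ --- a much stronger statement, and essentially the one being proved, so the induction hypothesis must be strengthened accordingly and then justified. Your ``lowest order of vanishing'' device does not supply this: the implication ``$\eta_k$ holomorphic and $\C$-independent, $\sum_k \eta_k F_k=0$ with $F_k$ holomorphic $\Rightarrow$ $F_k=0$'' is simply false in general (take $\eta_1=1$, $\eta_2=\tau-\tau_1$, $F_1=\tau-\tau_1$, $F_2=-1$: the leading terms at $\tau_1$ cancel perfectly, so no index has an ``unmatched'' contribution; on $\C^{\times}$ the forms $dz/z$ and $\log(z)\,dz/z$ exhibit the same phenomenon with $F$'s that are themselves iterated integrals of the family). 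Since you work entirely on the simply connected $\HH$, where every closed form is exact, there is no a priori obstruction to such conspiracies, and your closing appeal to ``Chen's theorem in the form suited to this setting'' presupposes exactly the cohomological independence hypothesis that you never verify.

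The paper closes this gap by leaving $\HH$ and exploiting the $\SL_2(\Z)$-action: a relation valid for all $\tau$ holds in particular along every path from $\gamma^{-1}\tau_0$ to $\tau_0$, so it can be evaluated on linear combinations of elements of $\pi_1(\mathfrak{M}_{1,1}(\C))\cong \SL_2(\Z)$. The elementary proof then shows that the one-forms $\tau^i f(\tau)\,d\tau$ are linearly independent \emph{as functions on $\SL_2(\Z)$} --- this is the injectivity of $f\mapsto [C_f]$ into $H^1(\SL_2(\Z), V_{2n}\otimes\C)$, i.e.\ the Eichler--Shimura theorem --- produces dual elements $x_j\in\C[\SL_2(\Z)]$ with $\int_{x_j}\omega_i=\delta_{ij}$, and strips the leading letter via the composition-of-paths formula; the conceptual proof substitutes Zariski-density of $\SL_2(\Z)$ in the relative completion for the same purpose. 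Your proposal contains no analogue of this input, and without it the downward induction cannot be carried out. The final reduction of $M[\tau]$-linear independence to injectivity is essentially right (the paper first multiplies by a modular form of large weight so that each $\tau^k f$ becomes a legitimate letter, then integrates), but it is moot until the injectivity argument itself is repaired.
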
 
We give two different proofs of linear independence. The first is conceptual and uses properties of relative completion. It is a generalisation to vector-valued iterated integrals of a well-known theorem on  the linear independence of iterated integrals due to Chen \cite{Ch}.  The second  proof is elementary but more computational.\footnote{Since writing this paper, a third approach   in the case of Eisenstein series appeared  in \cite{Matthes}.}

We first deduce the second part of the proposition from the linear independence. 

\begin{proof} Since holomorphic modular forms are an algebra, we can multiply any linear relation amongst iterated integrals with coefficients in $M[\tau]$  by a non-zero modular form of sufficiently large weight to obtain a relation 
 of the form 
$$\sum_{I=( i_1,\ldots, i_n)}   \lambda_{I} \omega_{i_1} \int^{\tau_0}_{\tau} \omega_{i_2} \ldots \omega_{i_n} =0 $$
where $\lambda_I \in \C$ and every $\omega_i $ is of the form $\tau^k f(\tau)$ where $k+2$ is bounded above by the weight of $f$.
This in turn implies, by integrating, a linear relation of the form 
$$\sum_{I=( i_1,\ldots, i_n)}   \lambda_{I} \int^{\tau_0}_{\tau}  \omega_{i_1} \omega_{i_2} \ldots \omega_{i_n}  = \lambda $$
where $\lambda \in \C$. Since the right-hand side is a multiple of the empty iterated integral $1$, this is relation between iterated integrals. Using the fact that they are linearly independent, we deduce that  all coefficients $\lambda$ vanish. 
\end{proof} 

The following corollary follows immediately from the proposition by applying it term by term in $\overline{q}, \overline{\tau}$. 
\begin{cor} \label{corLIoverModforms}  Let  $\mathcal{I} \subset \C[[q]][\tau]$ denote the $\C$-vector space generated by the iterated integrals
$(\ref{itintgeneral})$. Then the following  natural map  is an injection:
$$ \mathcal{I} \otimes M[\tau]  \otimes  \overline{\mathcal{I}} \otimes  \overline{M}[ \overline{\tau}] \To \C [[q,\overline{q}]][\tau, \overline{\tau}]\ .$$ 
\end{cor}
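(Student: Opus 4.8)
\emph{Proof proposal.} The plan is to split the source into its ``holomorphic'' and ``antiholomorphic'' halves, reduce injectivity to the Proposition by an exactness argument, and then check that the remaining passage to the doubly-completed ring $\C[[q,\overline q]][\tau,\overline{\tau}]$ is harmless. Write $A = \mathcal{I} \otimes_\C M[\tau]$ and $B = \overline{\mathcal{I}} \otimes_\C \overline{M}[\overline{\tau}]$, so that, by associativity of $\otimes_\C$, the map in question factors as
$$ A \otimes_\C B \ \To \ \C[[q]][\tau]\ \otimes_\C\ \C[[\overline q]][\overline{\tau}] \ \To \ \C[[q,\overline q]][\tau,\overline{\tau}]\, . $$
Since $\C$ is a field, $\otimes_\C$ preserves injections, so it is enough to establish three things: \emph{(a)} $A \hookrightarrow \C[[q]][\tau]$; \emph{(b)} $B \hookrightarrow \C[[\overline q]][\overline{\tau}]$; \emph{(c)} the natural map $\C[[q]][\tau] \otimes_\C \C[[\overline q]][\overline{\tau}] \to \C[[q,\overline q]][\tau,\overline{\tau}]$ is injective.

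For \emph{(a)}: the iterated integrals \eqref{itintgeneral} have $q$-expansions in $\C[[q]][\tau]$ (this is the last-part argument of Proposition \ref{propIE}, using that $\C[[q]][\log q]=\C[[q]][\tau]$ is stable under primitives against $q^{-1}\,dq$), so indeed $\mathcal{I}\subset\C[[q]][\tau]$ as asserted; moreover, since the iterated integrals span $\mathcal{I}$ and are in particular $\C$-linearly independent, they form a $\C$-basis, and the second assertion of the Proposition says exactly that this basis stays linearly independent over $M[\tau]$. Spelling out the definition of the tensor product, that statement is precisely the injectivity of $A=\mathcal{I}\otimes_\C M[\tau]\to\C[[q]][\tau]$. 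Claim \emph{(b)} is the complex conjugate of \emph{(a)}.

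For \emph{(c)}: writing $\C[[q]][\tau]=\C[[q]]\otimes_\C\C[\tau]$ (and similarly in the barred variables), and using $\C[\tau]\otimes_\C\C[\overline{\tau}]\xrightarrow{\ \sim\ }\C[\tau,\overline{\tau}]$, exactness of $\otimes_\C$ reduces \emph{(c)} to injectivity of $\C[[q]]\otimes_\C\C[[\overline q]]\to\C[[q,\overline q]]$. This is a Vandermonde argument: given a nonzero element $\sum_{i=1}^n f_i(q)\otimes g_i(\overline q)$, with $g_1,\dots,g_n$ chosen $\C$-linearly independent, pick exponents $m_1,\dots,m_n$ for which the matrix $\big([\overline q^{\,m_j}]g_i\big)_{i,j}$ is invertible; if the image in $\C[[q,\overline q]]$ vanished, reading off the coefficients of $\overline q^{\,m_1},\dots,\overline q^{\,m_n}$ would give a homogeneous system with invertible coefficient matrix forcing all $f_i=0$, a contradiction.

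The substance of the corollary is entirely carried by the Proposition via steps \emph{(a)} and \emph{(b)}; the only mildly non-formal point is step \emph{(c)} — that inverting $q$-adic completion one variable at a time does not manufacture relations in $\C[[q,\overline q]]$ — which the Vandermonde/Wronskian trick disposes of. Everything else is bookkeeping about tensoring over a field.
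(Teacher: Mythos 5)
Your proof is correct and is essentially the paper's argument: the paper disposes of the corollary in one line (``follows immediately from the proposition by applying it term by term in $\overline{q}, \overline{\tau}$''), and your steps (a)--(b) are exactly the appeal to the Proposition and its complex conjugate, while your Vandermonde step (c) is just the careful formalization of ``term by term in $\overline{q},\overline{\tau}$''. Nothing is missing; you have simply written out the bookkeeping the paper leaves implicit.
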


\subsubsection{General proof of linear independence using relative completion} 

 Suppose that  $\omega \in  T^c(\bigoplus_{n \geq 1} M_{2n+2}(\C)\otimes V^{\vee}_{2n})$ such that 
$$\int_{\tau}^{\tau_0} \omega = 0$$
for all $\tau \in \HH$. In particular,  for any $\gamma \in \SL_2(\Z)$, the integral from $\tau=\gamma^{-1} \tau_0$ to $\tau_0$ also vanishes.
It follows that $\omega$ vanishes on $\pi_1(\mathfrak{M}_{1,1}(\C), x_0) \cong \SL_2(\Z)$, where $\mathfrak{M}_{1,1}(\C)$ is the orbifold quotient of $\HH$ by the action of $\SL_2(\Z)$, and $x_0$ is  the image of $\tau_0$.  But 
$$T^c \big(\bigoplus_{n \geq 1} M_{2n+2}\otimes V^{\vee}_{2n}\big) \quad \subset \quad \Or( \pi_1^{\rel, dR}(\mathfrak{M}_{1,1}(\C), x_0))$$
where $\pi_1^{\rel}$ denotes the de Rham version of   relative completion \cite{HaGPS, HaMHS, MMV}.  The topological fundamental group $\pi^{\mathrm{top}}_1(\mathfrak{M}_{1,1}(\C), x_0)$ is Zariski-dense in  its group-theoretic relative completion which is the Betti realisation of  the de Rham relative completion, and isomorphic to it over $\C$ via the comparison isomorphism. Thus $\SL_2(\Z)$ is Zariski-dense in the latter, and we have $\omega =0$, i.e, 
 iterated integrals are linearly independent.  Note that the  proof works more generally for the whole of the affine ring of de Rham relative completion, and not just for totally holomorphic iterated integrals.

\subsubsection{An elementary proof of linear independence}
For the benefit of the reader not familiar with relative completion, we spell out the above proof of 
 linear independence of iterated integrals in elementary terms. It only 
uses the Eichler-Shimura theorem.

Fix $\tau_0 \in \HH$. 
Given a modular form $f$ of weight $w$ and $0\leq i \leq w-2$, the differential one form $\tau^i f(\tau)d\tau$  on $\HH$ defines 
a function 
$$
 \gamma \mapsto  \int_{\gamma} \tau^i f(\tau) d\tau   \quad  : \quad  \SL_2(\Z)  \To    \C  
$$
where $\gamma$ denotes the geodesic path from $ \gamma^{-1} \tau_0$ to $ \tau_0$.

\begin{lem} Let $f_1,\ldots, f_n$ be linearly independent  holomorphic modular forms of weights $w_1,\ldots, w_n$. Then the functions
$\tau^i f_j(\tau) d\tau: \SL_2(\Z) \rightarrow \C$, where $1\leq j \leq n$ and $0\leq i \leq w_j-2$, are linearly  independent over $\C$.
\end{lem}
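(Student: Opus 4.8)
The plan is to reduce the statement to the classical Eichler--Shimura isomorphism by recognising the functions $\gamma\mapsto\int_\gamma\tau^i f_j(\tau)\,d\tau$ as components of Eichler cocycles. First I would observe that for a modular form $f$ of weight $w$, the collection of integrals $\int_\gamma\tau^i f(\tau)\,d\tau$ for $0\le i\le w-2$ assembles into the vector-valued period integral $\gamma\mapsto\int_\gamma \underline f(\tau)$, which takes values in $V_{w-2}\otimes\C$, and which (after subtracting the value at a fixed base point to get a genuine cocycle) represents the Eichler--Shimura class of $f$ in $H^1(\SL_2(\Z),V_{w-2}\otimes\C)$. The key input is that the Eichler--Shimura map $f\mapsto[\gamma\mapsto\int_{\gamma^{-1}\tau_0}^{\tau_0}\underline f]$ is \emph{injective} on holomorphic modular forms: this is essentially the statement that a holomorphic modular form whose periods all vanish is zero, which follows from the standard argument (integrate $f$ against itself, or use that the period polynomial determines $f$ up to a coboundary and a non-cusp-form contribution that is also detected).

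The main steps, in order, would be: (1) Suppose $\sum_{j,i}\lambda_{j,i}\,\tau^i f_j(\tau)\,d\tau$ integrates to zero over every $\gamma\in\SL_2(\Z)$. Group the terms by $j$: the inner sum $\sum_i\lambda_{j,i}\tau^i f_j(\tau)$ is the coefficient extraction of a fixed vector $v_j\in V_{w_j-2}^\vee\otimes\C$ applied to $\underline{f_j}(\tau)$, so the hypothesis says $\sum_j \langle v_j,\int_\gamma\underline{f_j}\rangle=0$ for all $\gamma$. (2) Since the weights $w_j$ need not be distinct, I would first separate by weight: fix a weight $w$ and collect all $f_j$ with $w_j=w$; because iterated integrals/periods of forms of different weights land in non-isomorphic $\SL_2(\Z)$-representations $V_{w-2}$, one can argue componentwise, or more simply just handle each weight's contribution on its own after noting the relation is a relation among cocycles valued in different representations — but cleanest is to reduce to the case of a single weight $w$ by an independence-of-weights argument, or simply to observe that the Lemma for fixed weight implies the general case since a relation mixing weights forces each weight-graded piece to vanish (the functions $\gamma\mapsto\int_\gamma\tau^i f_j d\tau$ for forms of a given weight $w$ only involve $\tau$ up to degree $w-2$, and one can extract the weight-$w$ part). (3) For a single weight $w$: the map $(f,v)\mapsto\big(\gamma\mapsto\langle v,\int_\gamma\underline f\rangle\big)$ from $M_w\otimes V_{w-2}^\vee$ to functions $\SL_2(\Z)\to\C$ — after passing to cohomology classes in $H^1(\SL_2(\Z),V_{w-2}\otimes\C)$, which is legitimate because the relation holds for \emph{all} $\gamma$ including the identity, so the constant-term ambiguity drops out — is essentially the dual of the Eichler--Shimura map $M_w\hookrightarrow H^1(\SL_2(\Z),V_{w-2}\otimes\C)$ paired with $V_{w-2}^\vee$. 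Injectivity of Eichler--Shimura on holomorphic forms then forces $\sum_j\langle v_j,\mathrm{(ES\ class\ of\ }f_j)\rangle=0$; since the $f_j$ are linearly independent, their Eichler--Shimura classes span a subspace on which pairing with a fixed nonzero functional cannot vanish identically, giving $v_j=0$ for all $j$, i.e.\ $\lambda_{j,i}=0$.

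The step I expect to be the main obstacle is handling the passage from ``the integral $\int_\gamma$ vanishes for all $\gamma$'' to a genuine cohomological statement, and making precise that one may work weight-by-weight despite the $f_j$ having possibly coincident weights. The subtlety is that $\gamma\mapsto\int_{\gamma^{-1}\tau_0}^{\tau_0}\underline f$ is not itself a cocycle valued in the trivial module but in $V_{w-2}$ with its $\SL_2(\Z)$-action via $|_\gamma$, so one must check that the linear relation respects this twisting — concretely, that after choosing the base point $\tau_0$ the map really does define a class in $H^1(\SL_2(\Z),V_{w-2}\otimes\C)$ and that the Eichler--Shimura theorem applies in exactly the normalisation used here. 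I would isolate this as the crux and dispatch it by invoking the standard Eichler--Shimura package (e.g.\ as in the treatment underlying \cite{MMV}), noting that the required fact is precisely injectivity of $M_w\to H^1(\SL_2(\Z),V_{w-2}\otimes\C)$, which is classical. The rest is linear algebra: a nonzero linear functional on $\C$ composed with an injective linear map cannot annihilate a linearly independent family's images unless the scalars defining the functional-combination are zero.
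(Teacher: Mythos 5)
Your overall strategy --- reduce everything to the injectivity of the Eichler--Shimura map $f\mapsto [C_f] : M_{w}(\C)\to H^1(\SL_2(\Z), V_{w-2}\otimes \C)$ --- rests on the same key input as the paper's proof, but the two reduction steps you yourself flag as ``the main obstacle'' are exactly where your argument has genuine gaps, and the second one fails as literally stated. First, the weight separation: the objects in play are scalar-valued functions on $\SL_2(\Z)$, and the space of such functions carries no a priori weight grading, so ``extracting the weight-$w$ part'' of a relation is not an operation you have defined; the remark that forms of different weights ``land in non-isomorphic representations'' does not help, because the hypothesis is a single scalar identity, not a relation inside a representation. Second, and more seriously, a linear functional $v\in (V_{w-2}\otimes\C)^{\vee}$ is \emph{not} a morphism of $\SL_2(\Z)$-modules (for $w>2$ there are no nonzero ones), so it induces no map $H^1(\SL_2(\Z),V_{w-2}\otimes\C)\to H^1(\SL_2(\Z),\C)$: the relation $\sum_j \langle v_j, C_{f_j}(\gamma)\rangle =0$ cannot be ``passed to cohomology classes'', and the expression $\langle v_j, (\hbox{ES class of } f_j)\rangle$ is not defined. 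What is actually needed, and missing, is the statement that the joint cocycle $\gamma\mapsto (C_{f_1}(\gamma),\ldots,C_{f_n}(\gamma))$ has values spanning all of $\bigoplus_j V_{w_j-2}\otimes\C$. This can be supplied: the cocycle identity $C(gh)=C(g)\big|_h + C(h)$ shows that the span of the values is an $\SL_2(\Z)$-subrepresentation; by Zariski density, irreducibility of each $V_{w-2}$ and Schur's lemma, a proper subrepresentation would force a genuine $V_{w-2}$-valued relation $\sum_{j : w_j=w} u_j C_{f_j}=0$ in some single weight $w$, which Eichler--Shimura injectivity together with the linear independence of the $f_j$ rules out. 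Note that this one observation repairs both gaps at once, since the isotypic decomposition performs the weight separation for you.

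The paper reaches the same endpoint by a more hands-on route: it acts on the one-forms by pullback, using $S^*(f\,d\tau)=\tau^{w-2}f\,d\tau$ and $(T^*-\id)(\tau^i f\,d\tau)= i\,\tau^{i-1}f\,d\tau + (\hbox{lower order})$, to first kill all powers of $\tau$ and then isolate the top weight, reducing any putative relation to one of the form $\sum_j \lambda_j f_j\,d\tau$ with all $f_j$ of a single weight; it then bootstraps back up to the vector-valued identity $\sum_j \lambda_j C_{f_j}=0$ and invokes Eichler--Shimura. If you wish to keep your cohomological framing you must add the ``span of cocycle values is a subrepresentation'' step explicitly; as written, the crux of your proof is asserted rather than proved.
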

\begin{proof}
An element $g\in \SL_2(\Z)$  defines an automorphism of $\HH$. By functoriality of integration with respect to automorphisms (change of variables formula),  
a  linear combination of one-forms of the above type $\omega$ vanishes on all $\gamma \in \SL_2(\Z)$ if and only if the same is true of $g^* \omega$.
Now observe  by modularity of $f$ that 
\begin{eqnarray}  S^*  (f(\tau) d\tau)  &=  & \tau^{w-2} f(\tau) d\tau  \nonumber \\
T^*   (\tau^i f(\tau)  d\tau) & = &  (\tau+1)^i f(\tau) d\tau \ ,  \nonumber
\end{eqnarray} 
where $w$ is  the weight of $f$, and $T \tau = \tau+1$, $S \tau  = -\tau^{-1}$.  It follows that 
$$(T^* - \id)   (\tau^i f(\tau)  d\tau)  = i \tau^{i-1}  f(\tau) d\tau  \ +  \ \hbox{  terms of lower order in  } \tau^j f(\tau)$$
Equivalently, the complex vector space spanned by the  $\tau^i f(\tau) d\tau$ for $ 0\leq i \leq w-2$  is an  irreducible $\SL_2(\Z)$-representation, 
and  any non-zero vector in it generates the entire space under the action of  $S$ and $T$.   The following arguments are easily translated into representation-theoretic language, but we give a long-winded account for explicitness. 
Suppose that there exists an   $\omega=\sum \lambda_{ij} \tau^i f_{j}(\tau) d\tau $ which vanishes on $\SL_2(\Z)$ for some $\lambda_{ij} \in \C$ not all zero.   By applying $(T^*-\id)^N$ for a sufficiently large $N$, we can  assume that all powers of $\tau$ are zero, i.e., $\lambda_{i,j}=0$ for $i>0$.  By applying $S^*$, and once again  $(T^*-\id)^{N'}$ for some $N'$,   we can assume in addition  that all $f_j$ have the same weight $w$. Thus we can write $\omega=  \sum \lambda_j f_j(\tau) d\tau $ where $\lambda_j \in \C$ are not all zero.   Via the action of $g^*$ for $g \in \SL_2(\Z)$, we deduce that  $\tau^i \omega$ vanishes as a function on $\SL_2(\Z)$  for all $0\leq i \leq w-2$. The same is therefore true of the  formal linear combination
$$ \sum_j \lambda_j f_j(\tau) (X- \tau Y)^{w-2} d\tau \ $$
 which has coefficients in $V_{2n}$.
 But this is equivalent to the statement 
 $$ \sum_j \lambda_j C_{f_j}=0$$
 where for $f \in M_{2n+2}(\C)$ holomorphic, $C_{f} \in Z^1(\SL_2(\Z), V_{2n}\otimes \C)$ is the cocycle 
 $$ C_f =  \gamma \mapsto  \int^{\tau_0}_{\gamma^{-1}\tau_0}  f(\tau) (X- \tau Y)^{w-2}$$
   associated to $f$. But the Eichler-Shimura theorem  \cite{Eichler, Shimura} implies that the map 
   $$ f\mapsto [C_f] \ : \ M_{2n+2}(\C) \To H^1(\SL_2(\Z), V_{2n}\otimes \C)$$
   is injective, so  \emph{a fortiori} the cocycles $C_{f_j}$  are independent.  Hence $\lambda_j=0$ for all $j$, a contradiction.
\end{proof}

The following lemma is an easy exercise, but included for completeness. 
\begin{lem} Given any linearly independent functions $\phi_1,\ldots, \phi_m: S \rightarrow \C$, where $S$ is a set, there exist $x_1,\ldots, x_m \in  \bigoplus_{s\in S} \C s
$ such that $\phi_i(x_j) = \delta_{ij}$ for all $1\leq i,j \leq m$, where the  functions $\phi_i$ are extended  linearly to linear combinations in $S$.
\end{lem}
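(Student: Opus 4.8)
The plan is to reduce the statement to a standard fact about linear independence of functionals on a vector space over $\C$. We are given linearly independent functions $\phi_1,\ldots,\phi_m : S \to \C$, which we extend linearly to functionals on the free vector space $W := \bigoplus_{s\in S}\C s$. We want vectors $x_1,\ldots,x_m\in W$ with $\phi_i(x_j)=\delta_{ij}$.

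First I would form the combined evaluation map $\Phi := (\phi_1,\ldots,\phi_m) : W \to \C^m$, which is $\C$-linear. The key step is to show $\Phi$ is \emph{surjective}. Indeed, if $\Phi$ were not surjective, its image would lie in a proper subspace of $\C^m$, hence be annihilated by some nonzero linear form on $\C^m$; that is, there would exist $(c_1,\ldots,c_m)\in\C^m\setminus\{0\}$ with $\sum_i c_i \phi_i(x)=0$ for all $x\in W$. Since $W$ is spanned by the elements of $S$ and each $\phi_i$ is the linear extension of the corresponding function on $S$, this says $\sum_i c_i\phi_i = 0$ as functions on $S$, contradicting the assumed linear independence of $\phi_1,\ldots,\phi_m$. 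Hence $\Phi$ is surjective.

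Given surjectivity, for each $j$ simply choose $x_j\in W$ with $\Phi(x_j)=e_j$, the $j$-th standard basis vector of $\C^m$; unwinding the definition of $\Phi$ this is exactly the condition $\phi_i(x_j)=\delta_{ij}$ for all $i$. This completes the construction. The only point requiring the slightest care is keeping straight the distinction between the $\phi_i$ as functions on the set $S$ (where independence is assumed) and as linear functionals on $W$ (where the pairing is evaluated); there is no real obstacle, since linear independence of finitely many functions on $S$ is by definition the statement that no nontrivial $\C$-linear combination vanishes pointwise on $S$, and $S$ generates $W$, so this transfers verbatim to linear independence of the extended functionals on $W$. (Equivalently, one may phrase the argument via finitely many elements of $S$ already detecting the independence, then solve a finite linear system of full rank, but the functional-analytic phrasing above is cleanest and needs no finiteness of $S$.)
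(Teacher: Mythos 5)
Your proof is correct, and it takes a genuinely different (and cleaner) route from the paper's. The paper argues via determinants: it considers the matrix-valued function $P=(\phi_j(s_i))_{i,j}$ on $S^m$, shows by an inductive row-expansion argument on minors that $\det P$ is not identically zero, picks a point $(s_1,\ldots,s_m)\in S^m$ where it is nonzero, and inverts the resulting $m\times m$ matrix to produce the $x_j$ as linear combinations of $s_1,\ldots,s_m$. You instead package the $\phi_i$ into a single linear map $\Phi:\bigoplus_{s\in S}\C s\to\C^m$ and prove surjectivity by the standard duality argument: a proper image is killed by a nonzero linear form, which would give a vanishing nontrivial linear combination $\sum_i c_i\phi_i$ on $S$, contradicting independence. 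Your argument avoids the somewhat fiddly induction on minors and is arguably the more conceptual proof; what the paper's version buys in exchange is the slightly sharper (though unused) conclusion that the $x_j$ may be taken to be supported on just $m$ elements of $S$, namely the chosen $s_1,\ldots,s_m$. Both proofs are complete and correct; the transfer of linear independence from functions on $S$ to functionals on the free vector space, which you flag as the only delicate point, is indeed immediate since $S$ spans.
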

\begin{proof} 
Consider the matrix-valued function  $P= (\phi_j(s_i))_{i,j}$  on $S^{m}$.  Its determinant is not identically zero  on $S^{m}$.
For otherwise, a row expansion would yield a relation $\sum_{i=1}^{m} \lambda_i \phi_i(s_m) =0$, for all $s_m$, where the 
$\lambda_i$ are minors of $P$. By linear independence,  the $\lambda_i$ vanish. The same argument can then be applied to each minor of $P$, and we eventually deduce by induction on the size of $P$ that every entry $\phi_j(s_i)$ vanishes for all $s_i$, a  contradiction.    Let $s_1,\ldots, s_m \in S$ such that $\det (P)\neq 0$. Let $e_k$ be the column vector with a $1$ in the kth row and zeros elsewhere. Let $\lambda_k$, for $1\leq k \leq m$, denote the vector in $\C^m$ such that $\lambda_k^T   P  = e_k$.  Set  $x_k = (s_1, \ldots ,s_m). \lambda_k  \in  \bigoplus_{s\in S} \C s$. By construction it satisfies $\phi_i(x_j)= \delta_{ij}$. 
\end{proof}

Suppose that we have a non-trivial relation  of minimal length $n$ for all $\tau \in \HH$  $$\sum_{I} \lambda_I \int_{\tau}^{\tau_0} \omega_{i_1}\ldots \omega_{i_n} + \big(\hbox{iterated interals of  length} \leq n-1 \big)=  0 \ ,$$
where the $\omega_{i_j} \in  \{\omega_1,\ldots, \omega_N\}$, a set  of  linearly independent 1-forms  of the kind $\tau^i f(\tau) d\tau$ where $0 \leq i \leq w-2$, where $f \in M_{w+2}(\C)$.   Composition of paths  for iterated integrals \cite{Ch} states that 
$$\int_{\alpha \beta} \omega_{i_1} \ldots \omega_{i_n} = \sum_{k=0}^n \int_{\alpha} \omega_{i_1} \ldots \omega_{i_k}  \int_{\beta} \omega_{i_{k+1}} \ldots \omega_{i_n} \ $$
where $\alpha$, $\beta$ are two composable paths, and $\alpha \beta$ denotes the path $\alpha$ followed by $\beta$.  By linearity, it holds more generally for $\alpha$ any linear combination of paths which can be composed with $\beta$. 
By the previous two lemmas, there exist   elements $x_i \in \C[\SL_2(\Z)]$ for $1\leq i \leq N$, where $\SL_2(\Z)$ are classes of paths based at $\tau$,  such that 
$$\int_{x_j} \omega_i = \delta_{ij}\ .$$
 Apply the composition of paths formula with $\beta$ a path from $\tau$ to $\tau_0$, and $\alpha=   x_j$.
Since $\alpha\beta$ is  a linear combination of paths from points in the $\SL_2(\Z)$-orbit of $\tau$ to $\tau_0$ and since  the above linear combination of iterated integrals vanishes  along each such path,  we 
deduce $N$ relations, for each $1 \leq i \leq N$,  of the form
$$\sum_I  \lambda_I  \delta_{ i_1 j}  \int_{\tau}^{\tau_0} \omega_{i_2} \ldots \omega_{i_n}   + \big(\hbox{iterated interals of  length} \leq n-2 \big)= 0   \   $$
which have length $\leq n-1$.  By minimality, the coefficients $\lambda_I$ such that $i_1=j$ vanish. Since this holds for all 
$j=1,\ldots, N$, we conclude that the $\lambda_I$ all  vanish, a contradiction.

\section{A Lie algebra of geometric derivations}

 \subsection{Geometric context*} The de Rham fundamental group
 $  \pi^{dR}_1(\mathcal{E}^{\times}_{\partial/\partial q}, \tone_{0})$ of the punctured first order smoothing of the Tate curve \cite{MEM, HaGPS},  with  the basepoint given by (a choice of)  unit tangent vector  at the origin, is the de Rham realisation of a mixed Tate motive over $\Z$  \cite{HaMTZ}. This last fact is not strictly required for this paper and could be circumvented at the cost of complicating some arguments.  By standard arguments, the de Rham realisation is canonically graded and admits  an isomorphism 
 \begin{equation} \label{Pidefn}  \Pi  \cong   
  \pi^{dR}_1(\mathcal{E}^{\times}_{\partial/\partial q}, \tone_{1})\end{equation}
  where $\Pi$ is the pro-unipotent affine group scheme whose  Lie algebra is the completion of the bigraded Lie algebra $\Lie(\av, \bv)$ discussed below.

\subsection{Derivations}

Consider the    free bigraded Lie algebra $\Lie(\av,\bv)$
  on two generators $\av, \bv$.
We shall only be concerned with its $M$-grading, which  satisfies
$$\deg_M (\av) = -1 \qquad  , \qquad \deg_M(\bv) = 0 $$
Recall that this is one half of the $M$-grading in \cite{MEM, MMV}. 

 Let $\mathrm{Der}^{\Theta} \Lie(\av,\bv)$ denote the Lie algebra of derivations 
$$\delta : \Lie(\av, \bv) \To \Lie(\av, \bv)$$
which annihilate  the element $\Theta=[\av, \bv] \in \Lie(\av,\bv)$, i.e., 
$$[\delta(\av), \bv] + [\av, \delta(\bv)]=0\ .$$

\begin{propdefn} There exists a distinguished family of derivations \cite{Tsunogai, Nakamura, Pollack, MEM} for every $n\geq 0$:
\begin{equation} \label{varepsilondefn} \varepsilon_{2n+2}^{\vee}  \quad  \in \quad   \mathrm{Der}^{\Theta} \Lie(\av,\bv)\ ,
\end{equation} 
which are uniquely determined by the property $\varepsilon_{2n+2}^{\vee} \Theta=0$,    the formula 
$$\varepsilon_{2n+2}^{\vee} (\av)  \quad  =  \quad  \mathrm{ad} (\av)^{2n+2} (\bv) \ ,$$
 and the fact that $\varepsilon^{\vee}_{2n+2}(\bv)$ is of degree $\geq 1$ in $\bv$.  Their action on $\bv$ is given  by 
$$\varepsilon_{2n+2}^{\vee}  (\bv) \quad = \quad  {1 \over 2} \sum_{i+j= 2n+1} (-1)^i [ \mathrm{ad}(\av)^i \bv, \mathrm{ad}(\av)^j \bv]\ , $$
where $i,j\geq 0$. 
These derivations were first written down by Tsunogai \cite{Tsunogai}.
\end{propdefn}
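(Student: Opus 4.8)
The plan is to establish the existence and uniqueness of the Tsunogai derivations $\varepsilon_{2n+2}^{\vee}$ by a direct construction, following the pattern familiar from the work of Tsunogai, Nakamura, and Pollack, and then verify the explicit formula for the action on $\bv$. First I would reduce uniqueness to a triviality: a derivation $\delta$ of $\Lie(\av,\bv)$ is completely determined by the pair $(\delta(\av), \delta(\bv))$, since $\av,\bv$ generate the Lie algebra freely. So the assertion is that there is \emph{exactly one} pair $(\delta(\av),\delta(\bv))$ with $\delta(\av)=\mathrm{ad}(\av)^{2n+2}(\bv)$ fixed, subject to the two constraints: (a) the derivation condition $[\delta(\av),\bv]+[\av,\delta(\bv)]=0$, i.e.\ $\delta$ annihilates $\Theta=[\av,\bv]$, and (b) $\delta(\bv)$ has $\bv$-degree $\geq 1$. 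Given $\delta(\av)$, constraint (a) reads $[\av,\delta(\bv)] = -[\mathrm{ad}(\av)^{2n+2}\bv,\bv] = [\bv,\mathrm{ad}(\av)^{2n+2}\bv]$, so the problem becomes: show there is a unique element $\delta(\bv)$ of $\bv$-degree $\geq 1$ with $\mathrm{ad}(\av)\,\delta(\bv)$ equal to the prescribed element on the right-hand side.

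The key technical step is therefore the surjectivity and (appropriately restricted) injectivity of $\mathrm{ad}(\av)$ on the relevant graded piece of $\Lie(\av,\bv)$. For injectivity: the kernel of $\mathrm{ad}(\av)$ on the free Lie algebra is exactly the line $\Q\av$ (this is a standard fact; one can see it, e.g., via the embedding into the free associative algebra, where the centralizer of $\av$ is $\Q[\av]$, whose intersection with the Lie algebra is $\Q\av$). Since $\av$ has $\bv$-degree $0$, there is at most one lift of $\bv$-degree $\geq 1$, giving uniqueness. For existence one must check that $[\bv,\mathrm{ad}(\av)^{2n+2}\bv]$ actually lies in the image of $\mathrm{ad}(\av)$; the cleanest route is simply to exhibit a preimage, namely the claimed formula
\[
\delta(\bv) \;=\; \frac12 \sum_{\substack{i+j=2n+1\\ i,j\geq 0}} (-1)^i\,[\mathrm{ad}(\av)^i\bv,\,\mathrm{ad}(\av)^j\bv],
\]
and verify by a telescoping/Leibniz computation that $\mathrm{ad}(\av)$ applied to it equals $[\bv,\mathrm{ad}(\av)^{2n+2}\bv]$. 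Concretely, $\mathrm{ad}(\av)[\mathrm{ad}(\av)^i\bv,\mathrm{ad}(\av)^j\bv] = [\mathrm{ad}(\av)^{i+1}\bv,\mathrm{ad}(\av)^j\bv] + [\mathrm{ad}(\av)^i\bv,\mathrm{ad}(\av)^{j+1}\bv]$; pairing the $(-1)^i$-weighted terms with index shifts, the sum telescopes and only the extreme terms $i=0$ and $i=2n+2$ survive (with a factor of $2$ absorbed by the $\tfrac12$), leaving exactly $[\bv,\mathrm{ad}(\av)^{2n+2}\bv]$ after using antisymmetry of the bracket. This also automatically shows $\delta(\bv)$ has $\bv$-degree $2\geq 1$, so it is \emph{the} distinguished solution.

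Finally I would note that $\delta$ so defined is a well-defined derivation of $\Lie(\av,\bv)$ (any assignment of values on free generators extends uniquely), and that by construction it annihilates $\Theta$, hence $\varepsilon_{2n+2}^{\vee} := \delta \in \mathrm{Der}^{\Theta}\Lie(\av,\bv)$. The main obstacle is really the bookkeeping in the telescoping identity — getting the signs and the factor $\tfrac12$ to line up — together with the clean statement of injectivity of $\mathrm{ad}(\av)$ on degree $\geq 1$ pieces; both are elementary but need to be done carefully. Everything else (uniqueness from freeness, the derivation condition being equivalent to $\delta\Theta=0$) is formal. If one prefers, the injectivity input can be replaced entirely by the observation that \emph{once} a preimage of the correct $\bv$-degree is exhibited, any two differ by something in $\ker\mathrm{ad}(\av)\cap(\bv\text{-degree}\geq1)=0$, so existence-plus-uniqueness collapses to a single explicit verification.
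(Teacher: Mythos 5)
Your argument is correct, and it is worth noting that the paper itself offers no proof of this Proposition--Definition: it simply cites Tsunogai, Nakamura, Pollack and Hain--Matsumoto, where the derivations are constructed. Your self-contained verification is therefore a genuinely different (and more elementary) route, and all of its steps check out. The reduction of uniqueness to the statement $\ker\bigl(\mathrm{ad}(\av)\bigr)=\Q\,\av$ is sound -- the centralizer of a generator in $\Lie(\av,\bv)$ is one-dimensional (by Shirshov--Witt, or by your embedding into the free associative algebra, where the centralizer of $\av$ is $\Q[\av]$ and its Lie part is $\Q\,\av$) -- and since $\Q\,\av$ sits in $\bv$-degree $0$, the normalisation $\deg_{\bv}\varepsilon^{\vee}_{2n+2}(\bv)\geq 1$ pins down the lift. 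The telescoping identity is also right: writing $b_i=\mathrm{ad}(\av)^i\bv$ and $N=2n+1$, applying $\mathrm{ad}(\av)$ to $\tfrac12\sum_{i+j=N}(-1)^i[b_i,b_j]$ leaves, after cancellation of the interior terms, $\tfrac12\bigl((-1)^N[b_{N+1},b_0]+[b_0,b_{N+1}]\bigr)=[b_0,b_{N+1}]$ because $N$ is odd, which is exactly $-[\mathrm{ad}(\av)^{2n+2}\bv,\bv]$ as required by $\delta\Theta=0$. What your approach buys is an explicit, reference-free proof of both existence and uniqueness in a few lines; what the paper's citation buys is the broader context (the geometric origin of these derivations and their compatibility with the $\ssl_2$-action and the $M$-grading), which your computation does not address but which the statement does not actually require.
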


The Lie algebra $\Lie(\av, \bv)$ admits a right action by $\SL^{dR}_2(\Z)$
\begin{eqnarray} \Lie(\av, \bv) \times \SL^{dR}_2(\Z) & \To &  \Lie(\av, \bv) \label{SL2actsonLab}  \\ 
(\av, \bv) \big|_{\gamma}  & =  &    (d\, \av  +  c\, \bv , b\,  \av + a\, \bv ) \nonumber 
 \end{eqnarray} 
 where $\gamma$ is given by $(\ref{gammaact})$. The infinitesimal version of this action gives rise to an action of the Lie algebra $\ssl_2$ via the  following derivations:
$$\varepsilon_0  = - \av  { \partial \over \partial \bv}    \qquad , \qquad   \varepsilon_0^{\vee} = \bv {\partial \over \partial \av}   \ . $$
The notation is consistent with $(\ref{varepsilondefn})$ since $\varepsilon^{\vee}_0$ is indeed the case $n=0$ of (\ref{varepsilondefn}).
Since they  annihilate $\Theta$, they generate a copy of $\ssl_2 $ inside $ \mathrm{Der}^{\Theta} \Lie(\av,\bv)$, via
$$[\varepsilon_0, \varepsilon_0^{\vee} ]= \hh \quad , \quad  [\hh, \varepsilon_0]= -2 \varepsilon_0 \quad , \quad  [\hh, \varepsilon^{\vee}_0]=   2\varepsilon_0^{\vee}     $$
where  $\hh \in \mathrm{Der}^{\Theta} \Lie(\av,\bv)$ is multiplication by $\deg_{\bv} - \deg_{\av}$. 
The derivation algebra $ \mathrm{Der}^{\Theta} \Lie(\av,\bv)$ therefore  admits an inner action of $\ssl_2$. 
With our conventions,  a derivation  $x \in  \mathrm{Der}^{\Theta} \Lie(\av,\bv)$ is a lowest weight vector if and only if it satisfies
$[\varepsilon_0, x]=0$. It is a highest weight vector if and only if $[\varepsilon_0^{\vee}, x]=0$.

\begin{lem} \label{lemsl2onepsilons}  The elements $\varepsilon_{2n+2}^{\vee}$ are highest-weight vectors and generate an irreducible representation of $\ssl_2$ of dimension $2n+1$, with the basis  $\mathrm{ad}^{i} (\varepsilon_0^{\vee}) \varepsilon^{\vee}_{2n+2}$ for $0\leq i \leq 2n$.
   In particular, $\mathrm{ad}^{2n+1}(\varepsilon_0^{\vee})(\varepsilon^{\vee}_{2n+2})=0$.
\end{lem}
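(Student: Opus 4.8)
The plan is to deduce everything from the finite-dimensional representation theory of the inner $\ssl_2 = \langle \varepsilon_0, \varepsilon_0^\vee, \hh\rangle$ acting by the adjoint action on $\mathrm{Der}^\Theta \Lie(\av,\bv)$. Two inputs are needed: the $\hh$-weight of $\varepsilon_{2n+2}^\vee$, and the vanishing of $[\varepsilon_0, \varepsilon_{2n+2}^\vee]$. The weight is immediate: since $\hh$ acts on $\Lie(\av,\bv)$ by $\deg_\bv - \deg_\av$, one has $\hh(\av) = -\av$ and $\hh(\bv) = \bv$, so $\varepsilon_{2n+2}^\vee(\av) = \mathrm{ad}(\av)^{2n+2}(\bv)$ lies in $\hh$-degree $-(2n+2)+1 = -(2n+1)$; being homogeneous and carrying the $\hh$-degree of $\av$ (namely $-1$) to $-(2n+1)$, the derivation $\varepsilon_{2n+2}^\vee$ shifts the $\hh$-grading by $-2n$, i.e. $[\hh, \varepsilon_{2n+2}^\vee] = -2n\,\varepsilon_{2n+2}^\vee$.

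For the vanishing I would proceed as follows. Set $D = [\varepsilon_0, \varepsilon_{2n+2}^\vee]$; since $\mathrm{Der}^\Theta\Lie(\av,\bv)$ is a Lie subalgebra of all derivations, $D$ again annihilates $\Theta = [\av,\bv]$. Using $\varepsilon_0(\av) = 0$ and $\varepsilon_0(\bv) = -\av$, applying $\varepsilon_0$ through the bracket gives $D(\av) = \varepsilon_0\big(\mathrm{ad}(\av)^{2n+2}\bv\big) = \mathrm{ad}(\av)^{2n+2}\big(\varepsilon_0(\bv)\big) = -\mathrm{ad}(\av)^{2n+2}(\av) = 0$, since $[\av,\av] = 0$. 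Feeding $D(\av) = 0$ into $0 = D(\Theta) = [D(\av),\bv] + [\av, D(\bv)]$ forces $[\av, D(\bv)] = 0$, so $D(\bv)$ lies in the centralizer of $\av$ in the free Lie algebra $\Lie(\av,\bv)$, which is $\Q\av$. But $D$ has $\hh$-weight $-2 + (-2n) = -(2n+2)$, hence $D(\bv)$ lies in $\hh$-degree $1 - (2n+2) = -(2n+1)$, whereas $\av$ lies in $\hh$-degree $-1$; for $n\geq 1$ these disagree, so $D(\bv) = 0$, while for $n = 0$ the identity $D(\bv) = 0$ is a one-line check using $\varepsilon_2^\vee(\bv) = [\bv,[\av,\bv]]$. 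In all cases $D$ kills both generators, so $D = 0$, which is precisely the extremal-weight condition $[\varepsilon_0, \varepsilon_{2n+2}^\vee] = 0$ needed in the statement.

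To conclude I would invoke standard $\ssl_2$-theory. The derivations in $\mathrm{Der}^\Theta\Lie(\av,\bv)$ which change the total $(\av,\bv)$-degree by a fixed amount form a finite-dimensional, $\ssl_2$-stable subspace (such a derivation is determined by its values on $\av$ and $\bv$, which lie in one finite-dimensional graded piece of $\Lie(\av,\bv)$, and $\varepsilon_0,\varepsilon_0^\vee,\hh$ preserve total degree), so the module is locally finite and Weyl's theorem applies to the $\ssl_2$-submodule generated by $\varepsilon_{2n+2}^\vee$. This submodule is generated by a nonzero vector killed by $\varepsilon_0$ of weight $-2n \leq 0$, hence is the irreducible $\ssl_2$-representation of dimension $2n+1$: its basis is $\mathrm{ad}^i(\varepsilon_0^\vee)\varepsilon_{2n+2}^\vee$ for $0 \leq i \leq 2n$ (the successive images under the raising operator $\varepsilon_0^\vee$, which are nonzero for $i \leq 2n$), and $\mathrm{ad}^{2n+1}(\varepsilon_0^\vee)(\varepsilon_{2n+2}^\vee) = 0$, for otherwise it would be a lowest-weight vector of positive weight $2n+2$, impossible in a finite-dimensional module.

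The entire content sits in the single identity $[\varepsilon_0,\varepsilon_{2n+2}^\vee] = 0$, which is the main obstacle; the reduction above pushes it down to the value on $\bv$, dispatched abstractly for $n \geq 1$ and by inspection for $n = 0$. If one prefers an entirely explicit argument with no representation theory, the calculational core is instead to substitute the quadratic formula for $\varepsilon_{2n+2}^\vee(\bv)$ into $\varepsilon_0$ and verify, with careful sign bookkeeping, that $\varepsilon_0$ annihilates every bracket $[\mathrm{ad}(\av)^i\bv,\mathrm{ad}(\av)^j\bv]$ with $i,j \geq 1$ and returns $-\mathrm{ad}(\av)^{2n+2}\bv$ from each of the two boundary terms $i = 0$ and $j = 0$.
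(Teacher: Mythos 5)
Your proof is correct, and it is genuinely more informative than the paper's, which simply cites Hain--Matsumoto \cite{MEM} for these facts without argument. The substantive content you supply is the identity $[\varepsilon_0,\varepsilon_{2n+2}^{\vee}]=0$, and your reduction is sound: $D(\av)=0$ follows from $\varepsilon_0(\av)=0$ and $\mathrm{ad}(\av)^{2n+2}(\av)=0$; the closure of $\mathrm{Der}^{\Theta}\Lie(\av,\bv)$ under brackets gives $[\av,D(\bv)]=0$; and the centralizer of $\av$ in a free Lie algebra is $\Q\av$ (Shirshov--Witt plus the fact that a free abelian Lie algebra has rank at most one), after which the $\hh$-degree count kills $D(\bv)$ for $n\geq 1$ and your explicit check handles $n=0$. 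A slightly cleaner variant of this last step avoids the centralizer fact and the case split altogether: $D(\bv)$ is homogeneous of $\bv$-degree $1$ and $\av$-degree $2n+2$, hence a multiple of $\mathrm{ad}(\av)^{2n+2}\bv$, and $[\av,\mathrm{ad}(\av)^{2n+2}\bv]=\mathrm{ad}(\av)^{2n+3}\bv\neq 0$ forces that multiple to vanish. Your finite-dimensionality argument (derivations shifting total degree by a fixed amount form a finite-dimensional $\ssl_2$-stable space) and the standard computation $\mathrm{ad}(\varepsilon_0)\,\mathrm{ad}(\varepsilon_0^{\vee})^{k}v=k(2n+1-k)\,\mathrm{ad}(\varepsilon_0^{\vee})^{k-1}v$ then give both the non-vanishing for $k\leq 2n$ and the vanishing at $k=2n+1$ exactly as you say. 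One terminological caution: with the paper's stated convention ($[\varepsilon_0,x]=0$ defines a \emph{lowest} weight vector, and $[\hh,\varepsilon_{2n+2}^{\vee}]=-2n\,\varepsilon_{2n+2}^{\vee}$), the condition you prove makes $\varepsilon_{2n+2}^{\vee}$ a lowest weight vector of weight $-2n$, whereas the lemma labels it ``highest-weight''; this is an inconsistency in the paper's terminology, not in your argument, and you have correctly identified and established the condition that the rest of the lemma actually requires.
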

\begin{proof} These facts are proved in \cite{MEM}. 
\end{proof} 

Let us denote by $\varepsilon_{2n+2}$ the operators obtained from $\varepsilon^{\vee}_{2n+2}$ by conjugating by the element $S$, where $(\av,\bv)\big|_S  =(-\bv, \av)$. These are lowest weight vectors and satisfy 
$$\varepsilon_{2n+2} (\bv) \quad = \quad  - \mathrm{ad} (\bv)^{2n+2}(\av)\ . $$

\subsection{(de Rham) Lie algebra of geometric derivations}

\begin{defn} Define a Lie subalgebra 
$$\ue  \leq  \mathrm{Der}^{\Theta} \Lie(\av,\bv)$$ to be the bigraded Lie subalgebra generated by the $\varepsilon_{2n+2}^{\vee}$ for $n \geq 1$, together with  the action of $\ssl_2$.  
By the previous lemma, it is the bigraded Lie subalgebra  generated by the derivations $\mathrm{ad}(\varepsilon_0^{\vee})^i \varepsilon_{2n+2}^{\vee}$ for all $n\geq 1$. 
\end{defn}

With this definition, which differs slightly from the version given in \cite{Sigma}, $\ue$ does not contain the element  $\varepsilon^{\vee}_2=\varepsilon_2$, 
which plays a limited role, and   commutes (as one may verify) with all elements of $\ue$. 
Therefore    the Lie algebra generated by all
derivations $\varepsilon_{2n+2}^{\vee}$ for $n\geq -1$  is    isomorphic to 
$$\varepsilon_2\Q \oplus ( \ssl_2 \ltimes  \ue) \  .$$

\begin{defn} Let $\Ue$ denote the (de Rham) pro-unipotent affine group scheme whose Lie algebra is the completion of  $\ue$.  
It admits a right action of $\SL^{dR}_2(\Z)$. 
\end{defn}

\subsection{Relations}  \label{sect: Relations} The elements $\varepsilon_{2n+2}$ satisfy many non-trivial relations, which were first studied by Pollack  \cite{Pollack} following a suggestion of Hain (in Pollack's notation, the notations  $\varepsilon$ and $\varepsilon^{\vee}$ are reversed). 
One shows that the elements 
\begin{equation}  \label{indepelements} [ \varepsilon_0^i \varepsilon_{2a+2} , \varepsilon_0^j \varepsilon_{2b+2}]  \qquad \hbox{ where } \quad  0\leq i \leq 2a \  , \  0\leq j \leq 2b
\end{equation} 
are linearly independent when $a,b \geq 1$ and   $a+b \leq 4$, but starting from  $M$-degree $\leq -12$,  Pollack showed that there are relations
\begin{eqnarray} \label{epsilonrel1}  [\varepsilon^{\vee}_{10}, \varepsilon^{\vee}_4 ] -  3 [\varepsilon^{\vee}_8, \varepsilon^{\vee}_6]  &= & 0  \\ 
2[ \varepsilon^{\vee}_{14}, \varepsilon^{\vee}_{4}] - 7[\varepsilon^{\vee}_{12}, \varepsilon^{\vee}_{6}] + 11[\varepsilon^{\vee}_{10}, \varepsilon^{\vee}_{8}]  &= &  0 \ , \nonumber
\end{eqnarray} 
which are the first two  in an infinite sequence of quadratic and higher order relations. 
There are several different proofs of these relations in the literature. We shall derive a new  and elementary  interpretation of these relations 
via the  orthogonality of equivariant iterated integrals of Eisenstein series to cusp forms  (theorem \ref{thmpartialsorthogtocusp}).

\subsection{Embeddings} One way to think of  the Lie algebra $\ue$ is as a quotient of a certain free Lie algebra generated by Eisenstein symbols (see the next section). Another is to embed it inside the free Lie algebra $\Lie \, (\av, \bv)$.

Let us write $H= \Q \av \oplus \Q \bv$. The natural map 
$$\mathrm{Der} \, \Lie(\av,\bv) \To   \mathrm{Hom}( H,  \Lie(\av, \bv) )$$
is injective, since a derivation $\delta$ is uniquely determined by $\delta(\av), \delta(\bv)$. Hence
$$\ue \To H^{\vee} \otimes_{\Q} \Lie(\av, \bv)$$
is also injective. In fact, more is true. 
It is straightforward to show that, as a consequence of the relation $\delta(\Theta)=0$,  the action of a derivation on either $\av$ or $\bv$ defines a pair of injective linear maps
\begin{equation} \label{evx}
{ev}_x: \delta \mapsto \delta (x):  \ue \To \Lie (\av, \bv)
\end{equation} 
where $x =\av$ or $\bv$. 
These maps do not respect the Lie algebra structure but  can be used to write down elements in  $\Or(\Ue)$. We shall only use them in \S\ref{sect: PLS}.

\section{Monodromy homomorphism}

\subsection{Geometric preamble*}  Let $\mathcal{G}^{dR}_{1,\veco}$ denote the de Rham   relative  completion \cite{HaMHS, MMV} of the fundamental group of the moduli scheme  of elliptic curves equipped with a non-zero abelian differential, with base point the unit tangent vector   at the cusp.  The monodromy action defines a canonical  homorphism 
$$\mathcal{G}^{dR}_{1, \veco} \To \mathrm{Aut} ( \pi_1^{dR}( \mathcal{E}^{\times}_{\partial/\partial q}, \tone_0))\ .$$
After choosing suitable splittings of the weight filtrations, it gives rise to a morphism from the graded Lie algebra of 
$\mathcal{G}^{dR}_{1, \veco}$  to $\varepsilon_2 \Q \oplus \ue$ ($\varepsilon_2$ is central). One  deduces the existence of  a homomorphism
$$\mathcal{G}^{dR}_{1,1} \To \Ue$$
where $\mathcal{G}^{dR}_{1,1}$ is the de Rham relative completion of the fundamental group of $\mathfrak{M}_{1,1}$. One knows \cite{MMV, MEM} that  the previous map factors through its quotient  $\U^{dR}_E$, i.e., all generators corresponding to cusp forms map to zero.

\subsection{Description of the map $\mu$}

The isomorphism
$$(\Xv,\Yv) \mapsto (\bv,\av) :  \Q \Xv\oplus \Q \Yv \overset{\sim}{\To}  \Q \bv \oplus \Q \av$$
  respects the action of $\SL^{dR}_2(\Z)$ on both sides. This induces an isomorphism of  the de Rham Lie algebras $\ssl_2$ which act on $\uu^{dR}_E$ and $\ue$ respectively:
\begin{equation} \label{XYderivs}
\Xv {\partial  \over  \partial \Yv } \mapsto  \varepsilon_0^{\vee} \qquad \hbox{ and } \qquad -  \Yv {\partial  \over  \partial \Xv } \mapsto  \varepsilon_0 \ .
\end{equation} 

\begin{defn}  \label{defnuEtoue} There is a unique  morphism of Lie algebras satisfying 
\begin{eqnarray} \label{mudefn}
\mu:   \uu^{dR}_E & \To & \ue  \\
\e_{2n+2} \Xv^{2n}  &  \mapsto & {2 \over (2n)!} \,  \varepsilon_{2n+2} \nonumber
\end{eqnarray} 
for all $n\geq 1$, which is equivariant for the respective actions of $\ssl_2$ on both sides.   It respects the $M$ grading. 
\end{defn} 
  The map $\mu$ is surjective.
 The map $\mu$ induces a surjective  (faithfully flat) homomorphism of affine group schemes 
$$\mu : \U^{dR}_E \To\!\!\!\!\!\!\!\!\!\To \Ue\ .$$

\subsection{The generating series $J$}
\begin{defn}
Let  $J$ denote the composition  $\mu  I^E$.  It defines an analytic  function 
$$ J : \HH \To \Ue(\C) $$
whose coefficients are certain linear combinations of iterated integrals of $\GE_{2n}$, $n\geq 2$.
\end{defn}
The following proposition  is perhaps known to one or two  experts, but  is not stated explicitly in the literature to our knowledge. 

\begin{prop}
The function $J$ is the unique solution to  the differential equation 
\begin{equation} \label{dJ}  d J  =  - \omega J 
\end{equation}
taking the value $1$ at the tangent vector $\partial/\partial q$ at  $q=0$,  where  $\omega$ is the formal one-form
\begin{equation} \label{omegadef} \omega =  -\mathrm{ad}(\varepsilon_0)  {dq \over q}   + \sum_{n \geq 1}  {2 \over (2n)!}  \varepsilon_{2n+2} \GE_{2n+2}(q) {dq \over q}\ .
\end{equation} 
\end{prop}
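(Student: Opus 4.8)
The plan is to reduce the statement about $J = \mu I^E$ to the differential equation already established for $I^E$ in Proposition \ref{propIE}, by applying the Lie algebra homomorphism $\mu$ termwise. First I would recall from Proposition \ref{propIE}(i) that $dI^E = -\Omega^E I^E$, where $\Omega^E = \sum_{n\ge 1} \underline{E}_{2n+2}(\tau)\,\e_{2n+2}$ and the $\e_{2n+2}$ act by left concatenation. Applying the induced map on completed enveloping algebras $\mu: \widehat{\U}^{dR}_E \to \widehat{\U}^{\mathrm{geom}}$ (which is continuous and a morphism of algebras), and using that $\mu$ intertwines left multiplication by $\e_{2n+2}$ with left multiplication by $\mu(\e_{2n+2})$, we get $d(\mu I^E) = -\mu(\Omega^E)\,\mu(I^E)$. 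So it remains to identify $\mu(\Omega^E)$ with $\omega$ of \eqref{omegadef} and to check the initial condition.

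The key computational step is the identification $\mu(\Omega^E) = \omega$. Here I would expand $\underline{E}_{2n+2}(\tau)$ using the second formula in \eqref{EunderlineasGE}, namely $\underline{E}_{2n+2}(\tau) = \GE_{2n+2}(q)(\Xv - \log q\,\Yv)^{2n}\,\tfrac{dq}{q}$, so that
$$\Omega^E = \sum_{n\ge 1} \GE_{2n+2}(q)\,(\Xv - \log q\,\Yv)^{2n}\,\e_{2n+2}\,\frac{dq}{q}.$$
Now $\mu$ sends $\e_{2n+2}\Xv^{2n} \mapsto \tfrac{2}{(2n)!}\varepsilon_{2n+2}$ and is $\ssl_2$-equivariant, with the $\ssl_2$-actions matched by \eqref{XYderivs}. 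The element $(\Xv-\log q\,\Yv)^{2n}\e_{2n+2}$ is obtained from $\Xv^{2n}\e_{2n+2}$ by exponentiating the nilpotent operator $-\log q\cdot \Xv\tfrac{\partial}{\partial \Yv}$ (more precisely, it is the image under $\exp(-\log q \cdot \Xv \partial_{\Yv})$ acting on the $V^{dR}_{2n}$-factor, since $(\Xv - \log q\,\Yv) = \exp(-\log q\,\Xv\partial_{\Yv})(\Xv)$ up to keeping track of which variable is substituted — one must be careful about whether it is $\Xv\partial_\Yv$ or $\Yv\partial_\Xv$ and the sign). Under $\mu$ this operator goes to $-\log q\cdot \mathrm{ad}(\varepsilon_0^\vee)$ by \eqref{XYderivs}; but I expect the correct bookkeeping to produce instead the conjugation by $\exp(-\log q\,\mathrm{ad}(\varepsilon_0))$ acting on the whole concatenation operator, which is exactly the role of the $-\mathrm{ad}(\varepsilon_0)\tfrac{dq}{q}$ term in $\omega$ once one rewrites the connection in a gauge-transformed form. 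This is the step I expect to be the main obstacle: correctly tracking how the $\log q$-dependence in $\underline{E}_{2n+2}$, which in the $\U^{dR}_E$ picture sits inside the $V^{dR}_{2n}$-coefficient, gets repackaged in the $\Ue$ picture as the separate $\mathrm{ad}(\varepsilon_0)\tfrac{dq}{q}$ term in $\omega$. Concretely I would verify that the substitution $I^E \mapsto J$ and $\Omega^E \mapsto \omega$ is compatible by checking that $q^{-\mathrm{ad}(\varepsilon_0)}$ conjugates $\sum_n \tfrac{2}{(2n)!}\varepsilon_{2n+2}\GE_{2n+2}(q)\tfrac{dq}{q}$ back to $\mu(\sum_n \underline E_{2n+2}(\tau)\e_{2n+2})$, using Lemma \ref{lemsl2onepsilons} (which gives $\mathrm{ad}(\varepsilon_0^\vee)^i\varepsilon_{2n+2}^\vee$ as a basis and the vanishing $\mathrm{ad}^{2n+1}(\varepsilon_0^\vee)\varepsilon_{2n+2}^\vee = 0$, matching the degree-$2n$ polynomial $(\Xv - \log q\,\Yv)^{2n}$) together with $\ssl_2$-equivariance of $\mu$.

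For the initial condition, I would note that $I^E$ is regularised at the tangential base point $\tone_\infty$ with $I^E(\tone_\infty) = 1$ by Proposition \ref{propIE}(ii), and that the base point $\tone_\infty$ corresponds under the Tate curve uniformisation to the tangent vector $\partial/\partial q$ at $q = 0$; since $\mu$ is a homomorphism sending $1$ to $1$, we get $J(\partial/\partial q) = \mu(I^E(\tone_\infty)) = 1$. Finally, uniqueness: the equation $dJ = -\omega J$ with the connection $d + \omega$ being integrable (which follows from $d\,\omega = \omega\wedge\omega = 0$, inherited from the analogous flatness of $\Omega^E$ noted before Proposition \ref{propIE}, or checked directly since the $\tfrac{dq}{q}$ forms wedge to zero) has a unique solution in $\Ue(\C)$ taking a prescribed value at a (tangential) base point, by the standard theory of regularised horizontal sections of unipotent connections. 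Hence $J$ is characterised as claimed.
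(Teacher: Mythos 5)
Your overall scaffolding matches the paper's proof: reduce to Proposition \ref{propIE} via $\mu$, use the identity $(\Xv-\log q\,\Yv)^{2n}=\exp\bigl(-\log q\,\Yv\tfrac{\partial}{\partial \Xv}\bigr)\Xv^{2n}$ together with the $\ssl_2$-equivariance of $\mu$ and the nilpotency from Lemma \ref{lemsl2onepsilons}, identify $\tone_{\infty}$ with $\partial/\partial q$, and invoke uniqueness of regularised horizontal sections. But the step you yourself flag as ``the main obstacle'' is a genuine gap, and the mechanism you propose for closing it does not work as stated. Applying $\mu$ to $dI^E=-\Omega^E I^E$ gives $dJ=-\mu(\Omega^E)J$ with
$$\mu(\Omega^E)\;=\;\sum_{n\geq 1}\tfrac{2}{(2n)!}\,\exp\bigl(\log q\,\mathrm{ad}(\varepsilon_0)\bigr)(\varepsilon_{2n+2})\,\GE_{2n+2}(q)\,\tfrac{dq}{q}\ ,$$
and this one-form is \emph{not} equal to $\omega$: it has no $\mathrm{ad}(\varepsilon_0)\tfrac{dq}{q}$ term, and its Eisenstein coefficients are conjugated by $q^{\mathrm{ad}(\varepsilon_0)}$. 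So ``it remains to identify $\mu(\Omega^E)$ with $\omega$'' is the wrong target. Moreover, the conjugation you suggest cannot repair this at the level of connection forms alone: gauge-transforming $d+\mu(\Omega^E)$ by $q^{\pm\mathrm{ad}(\varepsilon_0)}$ produces a connection whose horizontal section is $q^{\pm\mathrm{ad}(\varepsilon_0)}(J)$, not $J$. The content of the proposition is that the single function $J=\mu I^E$ satisfies the equation with $\omega$; this is a statement about $J$ itself, not an identity between the two one-forms, and it cannot be obtained by transport of structure alone.

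What the paper actually does to close this gap is to expand $J$ explicitly: the factor $\exp(\log q\,\mathrm{ad}\varepsilon_0)(\varepsilon_{2n+2})=\sum_m \varepsilon^{(m)}_{2n+2}\tfrac{(-\log q)^m}{m!}$ is rewritten using $\tfrac{(-\log q)^m}{m!}=\int_q^{\tone_{\infty}}\bigl(\tfrac{dq}{q}\bigr)^m$, so that $J$ becomes a sum of honest iterated integrals on the $q$-disc in which each form $\GE_{2n+2}(q)\tfrac{dq}{q}$ is preceded by a string of $m$ copies of $\tfrac{dq}{q}$, with prefactor $\varepsilon^{(m)}_{2n+2}=(\mathrm{ad}(-\varepsilon_0))^m\varepsilon_{2n+2}$. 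Differentiating then strips off the leftmost one-form: if it is a $\tfrac{dq}{q}$ one obtains the $\mathrm{ad}(\varepsilon_0)\tfrac{dq}{q}$ contribution, and if it is $\GE_{2n_1+2}\tfrac{dq}{q}$ one obtains the left-multiplication-by-$\varepsilon_{2n_1+2}$ contribution; summing over all terms yields exactly $dJ=-\omega J$. This term-by-term bookkeeping is precisely the content you defer, and it is where the separate $-\mathrm{ad}(\varepsilon_0)\tfrac{dq}{q}$ term of $\omega$ is produced. Your treatment of the initial condition and of uniqueness agrees with the paper. (A small slip: the operator generating $(\Xv-\log q\,\Yv)$ from $\Xv$ is $\exp\bigl(-\log q\,\Yv\tfrac{\partial}{\partial\Xv}\bigr)$, and by \eqref{XYderivs} it is $-\Yv\tfrac{\partial}{\partial\Xv}\mapsto\varepsilon_0$ that is relevant, not $\Xv\tfrac{\partial}{\partial\Yv}\mapsto\varepsilon_0^{\vee}$.)
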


\begin{proof}
It follows from the identity 
$$(\Xv- \log(q) \Yv)^n = \exp \Big(-\log(q)  \Yv {\partial \over \partial \Xv} \Big) \Xv^n$$
and the definition   \eqref{mudefn} of $\mu$ applied to  (\ref{EunderlineasGE}) that 
\begin{eqnarray} \mu (\e_{2n+2} \underline{E}_{2n+2}(\tau))  &= &  {2 \over (2n)!}  \exp \big(  \log(q) \mathrm{ad}(\varepsilon_0) \big) \varepsilon_{2n+2} \GE_{2n+2}(q) {dq \over q}  \nonumber \\
& = & {2 \over (2n)!}\sum_{m\geq 0 }  \frac{1}{m!}  (\log q)^m   \mathrm{ad}(\varepsilon_0)^m \varepsilon_{2n+2}  \GE_{2n+2}(q) {dq \over q}  \nonumber \\
&= & {2 \over (2n)!} \sum_{m\geq 0}\varepsilon_{2n+2}^{(m)}    \Big( \int^{\tone_\infty}_q  \underbrace{{dq \over q} \ldots {dq \over q}}_m \Big) \GE_{2n+2}(q) {dq \over q} \ , \nonumber
\end{eqnarray} 
where for simplicity we write $\varepsilon_{2n+2}^{(m)}$ for $( \mathrm{ad}( -\varepsilon_0))^m  \varepsilon_{2n+2}$. These elements  vanish whenever $m> 2n$  as a consequence of  lemma \ref{lemsl2onepsilons}.  

The tangential basepoint $\tone_{\infty}$ corresponds, in the $q$-disc, to $ \frac{\partial}{\partial q}$,  the unit tangent vector at the origin. 
From the formula \eqref{IEitint} for $I^E$ as an iterated integral transposed to the $q$-disk, we see that its image $\mu I^E$ has the form
$$ J = \mu I^E = \sum_{r\geq 0 } \sum_{n_{1}, \ldots, n_r\geq 0 } \sum_{m_1,\ldots, m_r\geq 0 }    J^{m_1,\ldots, m_r}_{n_1,\ldots, n_r} (q)$$
where 
\begin{multline} J^{m_1,\ldots, m_r}_{n_1,\ldots, n_r}(q)  = {2 \over (2n_1)! } \ldots {2 \over (2n_r)!} \varepsilon_{2n_1+2}^{(m_1)}  \ldots   \varepsilon_{2n_r+2}^{(m_r)}  
\\
\times 
\int_q^{\frac{\partial}{\partial q}}   \Big( \underbrace{{dq \over q} \ldots {dq \over q}}_{m_1}  \GE_{2n_1+2}(q) {dq \over q} \Big)  \ldots \Big( \underbrace{{dq \over q} \ldots {dq \over q}}_{m_r} \GE_{2n_r+2}(q) {dq \over q}\Big)
\end{multline} 
 It follows from the formula for the derivative of an  iterated integral that
$$ d J^{m_1,\ldots, m_r}_{n_1,\ldots, n_r}(q)  \quad   = \quad     - \mathrm{ad}( - \varepsilon_0)  {dq \over q} \, J^{m_1-1, m_2, \ldots, m_r}_{n_1,n_2, \ldots, n_r}(q)  $$
if $m_1 \geq 1$, and 
$$ d J^{0, m_2,\ldots, m_r}_{n_1,\ldots, n_r}(q)  \quad   = \quad   -   {2 \over (2n_1)!}  \varepsilon_{2n_1+2} \GE_{2n_1+2}(q)   {d q\over q}    \, J^{m_2,\ldots, m_r}_{n_2,\ldots, n_r}(q)  $$
otherwise. Therefore $J$ is 
the unique solution to the differential equation $(\ref{dJ})$ which takes the value $1$ at the unit tangent vector at the origin. 
\end{proof} 
The image of  $\CC^E$ under the homomorphism $\mu$ is a non-abelian cocycle
$$ \CCu := \mu \,\CC^E  \quad \in \quad Z^1 (\SL_2(\Z) ,   \Ue(\C) )\ ,$$
where $\SL_2(\Z)$ acts via  the Betti action    (\ref{SL2dRversusB}). 
 The function $J$ satisfies  \begin{equation} \label{Jmonodromy}
  J(\gamma \tau ) \big|_{\gamma}  \,  \CCu_{\gamma}  = J(\tau) 
 \end{equation} 
 for all $\tau \in \HH$, and $\CCu_{\gamma}$ satisfies the cocycle equation 
 \begin{equation} \label{CCucocycle}
\CCu_{gh} = \CCu_g\big|_{h}  \CCu_h 
 \end{equation} 
  for all $g, h \in \SL_2(\Z)$. 

\section{Action of complex conjugation} \label{sect: ComplexConjugation}
Complex conjugation acts via $q\mapsto \overline{q}$  on the unit disc, and  corresponds on the upper half plane to the involution $\tau\mapsto -\overline{\tau}$. 

 It acts on $\Ue(\C)$ by complex conjugation on the coefficients. 
It is important to note that it does not respect the cocycle relation (\ref{CCucocycle}), since
$$\overline{\CCu}_{gh} = \overline{\CCu}_g\big|_{\overline{h}} \, \overline{\CCu}_h $$
and $\overline{h}$ is not in general equal to $h$ since 
 the (Betti) action of $\SL_2(\Z)$ on $\Ue(\C)$ is via  (\ref{SL2dRversusB}), which involves powers of $2\pi i$ (see (\ref{SandTdeRhamdef})). To remedy this, we compose  the action of complex conjugation with the element  $-1 \in \G_m(\Q)$, where $\G_m$ is the multiplicative group corresponding to the $M$-grading.  
 
 More precisely, consider the involution 
 $$ (\av, \bv) \mapsto (-\av, \bv) : \LL(\av, \bv) \To \LL(\av, \bv)$$
 It has the effect of multiplying terms of $M$-degree $m$ by $(-1)^m$.  
This induces an involution $(-1): \ue \rightarrow \ue$ on the Lie subalgebra $\ue$, and hence an involution
$(-1) : \Ue \rightarrow \Ue$ on affine group schemes. 

\begin{defn}
Consider  the homomorphism  of groups 
\begin{equation} \label{sdef} \sv: \Ue(\C) \To \Ue(\C)
\end{equation}
which is obtained by composing $(-1) : \Ue \rightarrow \Ue$ with the action of complex conjugation on coefficients, in either order (they commute). 
\end{defn} 

It follows that the involution $\sv$ now preserves the cocycle relation:
 $$
\sv\, \CCu_{gh} = \sv \! \left(\CCu_g\big|_{h} \right) \sv \left( \CCu_h  \right)
 $$
  for all $ g, h \in \SL_2(\Z)$, 
and hence $\sv \,\CCu \in Z^1(\SL_2(\Z), \Ue(\C))$. It was shown in \cite{MMV} that the space of cocycles is a torsor over a certain automorphism group, and the technical heart of this paper is to relate $\CCu$ and $\sv\,  \CCu$ via this automorphism group.
To state this, 
recall that 
$\ZZ^{\sv} \subset \R$ 
denotes the ring of   single-valued multiple zeta values.
The following theorem involves a considerable amount  of the machinery constructed in \cite{MMV}, and may be taken as a black box.

\begin{thm} \label{thmExistbphi} There exist elements 
$$b^{\sv}  \in \Ue(\ZZ^{\sv}) \qquad \hbox{ and } \qquad \phi^{\sv} \in \mathrm{Aut}(\Ue)^{\SL_2}(\ZZ^{\sv})$$
such that, for all $\gamma \in \SL_2(\Z)$, we have 
\begin{equation} \label{bphionCgamma} (b^{\sv})^{-1}\big|_{\gamma} \phi^{\sv}( \sv \, \CCu_{\gamma} )  b^{\sv} = \CCu_{\gamma} \ .
\end{equation}
The  pair of elements $(b^{\sv}, \phi^{\sv})$ is well-defined  up to replacing it with $( ab^{\sv} , a \phi^{\sv} a^{-1})$, where $a\in \Ue(\ZZ^{\sv})^{\SL_2}$.
Furthermore, the automorphism  $$ (b^{\sv})^{-1} \phi^{\sv} b^{\sv}  \quad \in \quad  \mathrm{Aut}(\Ue)(\ZZ^{\sv})$$ is induced by an automorphism   $\psi^{\sv} \in \mathrm{Aut}(\Pi)(\ZZ^{\sv})$ of $\Pi$ (defined in (\ref{Pidefn})).
\end{thm}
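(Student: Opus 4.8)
The plan is to prove Theorem \ref{thmExistbphi} in three stages, the first two of which are the substantive content and the last of which is the comparison with $\Pi$ that appears as the final assertion. First I would set up the torsor structure on the space of cocycles $Z^1(\SL_2(\Z), \Ue(\C))$: by the results imported from \cite{MMV}, two cocycles with the same image in cohomology (equivalently, the same ``de Rham data'' at the cusp) differ by the action of an element of the relevant automorphism/unipotent group. The key observation is that $\sv\,\CCu$ and $\CCu$ have, by construction, the same associated graded and the same local behaviour at the cusp up to the correction by $(-1)\in\G_m$ that was built into the definition of $\sv$ precisely so that $\sv\,\CCu$ remains a genuine cocycle. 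One then extracts from this torsor structure the existence of a pair $(b, \phi)$ with $b\in\Ue(\C)$, $\phi\in\mathrm{Aut}(\Ue)^{\SL_2}(\C)$ satisfying \eqref{bphionCgamma}; the ambiguity $(b,\phi)\mapsto(ab, a\phi a^{-1})$ with $a\in\Ue(\C)^{\SL_2}$ is exactly the stabiliser of the torsor action, which is a standard cocycle-coboundary computation. The real content of the first stage is thus bookkeeping with the machinery of \cite{MMV}, reorganised for the involution $\sv$.

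The second and hardest stage is the \emph{rationality} (more precisely, the $\ZZ^{\sv}$-integrality) of $b^{\sv}$ and $\phi^{\sv}$: that one can choose the pair with coefficients in the ring of single-valued multiple zeta values rather than in $\C$. The plan here is to use the $q$-expansions established earlier (Proposition \ref{propIE} gives that the coefficients of $I^E$, and hence of $J=\mu I^E$, lie in $\C[[q]][\log q]$ with explicit bounds), together with the computation of $\CCu_T$ whose coefficients are powers of $2\pi i$, to reduce the problem to an identity at the cusp. The comparison of $\CCu$ with $\sv\,\CCu$ near $q=0$ then becomes a statement about the single-valued version of the regularised limit, i.e. about single-valued multiple zeta values: the element $b^{\sv}$ should be essentially the single-valued period of the ``associator-type'' element governing the local monodromy, and $\phi^{\sv}$ is determined by it via the torsor relation. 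This is where the input that $\zeta^{\sv}(w)=\Lo_w(-\tone_1)$ satisfies all motivic relations, and that the single-valued period map is a ring homomorphism onto $\ZZ^{\sv}$, does the work; I would invoke the single-valued theory in the form used in \cite{SVMP, MMV} rather than reprove it. I expect this integrality step — pinning the \emph{a priori} complex elements down to $\ZZ^{\sv}$ — to be the main obstacle, since it requires identifying the relevant local comparison constant with a single-valued period rather than merely a period.

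For the final assertion I would argue as follows. The composite $\psi^{\sv}:=(b^{\sv})^{-1}\phi^{\sv}b^{\sv}\in\mathrm{Aut}(\Ue)(\ZZ^{\sv})$ is, by \eqref{bphionCgamma}, precisely the automorphism intertwining the monodromy representation of $\SL_2(\Z)$ on $\Ue$ with its $\sv$-conjugate; so it is the automorphism induced on $\Ue$ by the action of the real Frobenius $F_\infty$ (complex conjugation) on the relevant fundamental group. Now recall from the geometric preamble that $\Ue$ is a quotient of the de Rham relative completion $\mathcal{G}^{dR}_{1,1}$, which acts on $\Pi=\pi_1^{dR}(\Eq,\tone_1)$ via the monodromy homomorphism, and that $\Pi$ is the de Rham realisation of a mixed Tate motive over $\Z$. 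Complex conjugation acts on the Betti realisation of this motive and hence, via the comparison isomorphism over $\R$ (again twisted by the $(-1)\in\G_m$ correction so as to land in $\ZZ^{\sv}$), induces an automorphism $\psi^{\sv}\in\mathrm{Aut}(\Pi)(\ZZ^{\sv})$. Functoriality of the monodromy homomorphism $\mathcal{G}^{dR}_{1,1}\to\Ue$ — which is equivariant for all the structures in play, in particular for the action of $F_\infty$ — then shows that the automorphism of $\Ue$ induced by $\psi^{\sv}$ on $\Pi$ coincides with $(b^{\sv})^{-1}\phi^{\sv}b^{\sv}$; uniqueness of the compatible automorphism (the stabiliser computation of stage one, now on the $\Pi$ side) closes the argument. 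The only subtlety to check carefully is that the $\G_m$-twist by $(-1)$ is applied consistently on both the $\Ue$ side and the $\Pi$ side so that ``$F_\infty$ composed with $(-1)$'' is the object being compared in both places; this is forced by the definition of $\sv$ and the normalisation \eqref{SL2dRversusB} of the Betti action.
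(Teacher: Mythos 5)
Your high-level picture is the right one --- the theorem is exactly the statement that $\CCu$ and $\sv\,\CCu$ are related by an element of the automorphism group, and your treatment of the final assertion (functoriality of the monodromy homomorphism $\GG^{dR}_{1,1}\to\mathrm{Aut}(\Pi)$ with respect to the real Frobenius) matches the paper. But the paper does not run three separate stages. It constructs a \emph{single} element: the canonical $\mathbf{s}\in G^{dR}_{\mathcal{H}}(\C)$ in the Tannaka group of the category of realisations, given by $c^{-1}\circ F_{\infty}\circ c$, corrected to $\sv=(-1)\mathbf{s}$ so that it acts trivially on $\Q(-1)$. Because $\sv$ lives in the Tannaka group it acts compatibly on $\GG^{dR}_{1,1}$, on $\Ue$ and on $\Pi$ at once; the explicit description of $\mathrm{Aut}(\GG^{dR}_{1,1})$ from \cite{MMV} \S 10 writes its action as a pair $(b^{\sv},\phi^{\sv})$ up to the stated ambiguity; and the cocycle equation \eqref{bphionCgamma} drops out because $\gamma\in\SL_2(\Z)$ is Betti-rational and the affine ring of $\SL_2$ is pure Tate, so $\sv$ fixes $\gamma$. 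Existence, the cocycle identity, the ambiguity, and the descent to $\Pi$ are thus all consequences of one construction, not of a torsor argument on $Z^1(\SL_2(\Z),\Ue(\C))$ followed by separate verifications.

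The genuine gap is in your second stage. You propose to establish $\ZZ^{\sv}$-rationality by ``reducing to an identity at the cusp'', comparing $\CCu$ with $\sv\,\CCu$ near $q=0$ and identifying $b^{\sv}$ with a single-valued period of the local (associator-type) monodromy. This cannot work: the cocycle is determined by its values at \emph{both} $T$ and $S$ (the paper's item (iii) of \S\ref{SectFurther} makes this explicit --- the inertial relation at the cusp constrains but does not determine $(b^{\sv},\phi^{\sv})$), and the coefficients of $\CCu_S$ are multiple modular values, which include periods of cusp forms and are emphatically not multiple zeta values. No analysis localised at the cusp can explain why the comparison of $\CCu_S$ with $\sv\,\CCu_S$ is governed by elements with coefficients in $\ZZ^{\sv}$. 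The actual source of rationality in the paper is global and motivic: Hain's theorem \cite{HaMTZ} that $\pi_1(\Eq,\tone_0)$ is the de Rham realisation of a mixed Tate motive over $\Z$, combined with the result of \cite{SVMP} that the single-valued Frobenius $\sv$ acting on the de Rham realisation of \emph{any} mixed Tate motive over $\Z$ has coefficients in $\ZZ^{\sv}$. You invoke this mixed Tate input only in your third stage, but it is precisely what is needed in the second; deferring it leaves the rationality claim unsupported. (The paper's remark \ref{remHainMorph} does acknowledge a more computational route via the Hain morphism from $\Pro^1\setminus\{0,1,\infty\}$, but notes that it would forfeit some of the constraints of \S\ref{SectFurther}, and in any case that route still goes through $S$, not just the cusp.)
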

The group $ \mathrm{Aut}(\Ue)^{\SL_2}$ denotes the $\SL_2$-invariant automorphisms of $\Ue$, i.e., 
automorphisms $\phi: \Ue \overset{\sim}{\rightarrow} \Ue$ satisfying $\phi ( x g) = \phi(x) g$ for all $ g\in \SL^{dR}_2$, with  the de Rham action  on $\Ue$.  Therefore, an element $\phi \in  \mathrm{Aut}(\Ue)^{\SL_2}$ commutes with the (Betti)  image  of $\SL_2(\Z)$ in $
\SL_2^{dR}(\C)$ under the comparison map (\ref{SL2dRversusB}).

\subsection{Further properties} \label{SectFurther} Before turning to the proof, we state  a number of further  properties satisfied by the elements $b^{\sv}, \phi^{\sv}$ of the theorem.

(i).  The elements $b^{\sv}, \phi^{\sv}$ are respectively exponentials of elements
$$\beta^{\sv} \in \ue (\ZZ^{\sv}) \quad \hbox{ and } \quad \delta^{\sv} \in \mathrm{Der} ( \ue)^{\ssl_2}(\ZZ^{\sv}) $$
where the derivation $\delta^{\sv}$ has  the property that 
$$[\delta^{\sv}, \varepsilon_0] =0 \qquad \hbox{ and } \qquad [\delta^{\sv}, \varepsilon^{\vee}_0] =0  $$
which  is equivalent to saying that  $\delta^{\sv}$ is $\ssl_2$-invariant. 
\\

(ii). There exists a  derivation 
$$\pp^{\sv}=\Lie \,\psi^{\sv}    \quad \in \quad    \mathrm{Der}^{\Theta} \Lie(\av,\bv)(\ZZ^{\sv}) \ $$
which preserves $\ue(\ZZ^{\sv}) $ and whose restriction to   $\ue(\ZZ^{\sv}) $  satisfies
$$\pp^{\sv} \big|_{\ue(\ZZ^{\sv}) } =  \mathrm{ad}(\beta^{\sv}) + \delta^{\sv} \ .$$
This is a highly non-trivial constraint.  For example, it expresses the non-obvious fact that the derivation on the right hand side 
is uniquely determined by its action on a single element $\av$ (or $\bv$). Conversely, the fact that $\pp^{\sv}$ in turn normalises the image of all geometric derivations $\ue$ is equivalent to an infinite sequence of  combinatorial constraints of the form
$$ [  \pp^{\sv} , \varepsilon_{2n+2} ] \in \ue (\ZZ^{\sv})  $$
for every $n\geq 1$.  It is far from obvious that there  exist any solutions to these equations but follows from 
the Tannakian  theory implicit in the proof of theorem \ref{thmExistbphi} (we know that `motivic' derivations must satisfy a similar property). 
\\

(iii). The equation $(\ref{bphionCgamma})$, in the case $\gamma=T$, is equivalent to the  `inertial relation'
$$[\beta^{\sv}, \varepsilon_0] + [\beta^{\sv}, N_+]+ \delta^{\sv}(N_+) \quad = \quad 0 $$
where the element $N_+ \in \ue$ is the element of $M$-degree $-1$ given by 
$$N_+\quad =  \quad \sum_{n\geq 1}  {B_{2n+2}\over 4n+4} {2 \over (2n)!}  \varepsilon_{2n+2} $$
and is the unipotent part of the logarithm of $(T, \CCu_T) \in \SL_2\ltimes \Ue(\C)$. 
The inertial relation ties together $\beta^{\sv}$ and $\delta^{\sv}$:  information about $\beta^{\sv}$ can be deduced
from information about $\delta^{\sv}$ and vice-versa. 
The equation  $(\ref{bphionCgamma})$ is uniquely determined by its two instances $\gamma =T$ and $\gamma =S$. In other words, $(b^{\sv},\phi^{\sv})$ are uniquely determined (up to twisting by $a\in (\Ue)^{\SL_2}$ via $(b,\phi) \mapsto (ab^{\sv}, a \phi^{\sv} a^{-1})$), by the inertial relation and 
$$ ( b^{\sv})^{-1}\big|_{S} \phi^{\sv}( \sv \,  \CCu_{S} )  b^{\sv} = \CCu_{S} \ .$$

(iv). It follows as a consequence of \S18 of \cite{MMV} that to lowest order
$$b \equiv 1 + \sum_{n\geq 1}  \zeta_{\sv}(2n+1) \, \varepsilon^{\vee}_{2n+2}  \qquad \pmod{[\ue(\ZZ^{\sv}), \ue(\ZZ^{\sv})]} \ ,$$
is canonical, where $\zeta_{\sv}(2n+1) = 2 \zeta(2n+1)$.  The element $\phi^{\sv}$ is also known to lowest order by  \cite{MMV}, theorem 16.9, via the inertial relation. In particular, $$[\delta^{\sv}, \ue(\ZZ^{\sv})] \quad \subset \quad  [\ue(\ZZ^{\sv}), \ue(\ZZ^{\sv})]\ .$$ 
The element $\pp^{\sv}$   satisfies 
$\pp^{\sv} (\av) \equiv \av$ and $\pp^{\sv} (\bv) \equiv \bv$ modulo terms of degree $\geq 3 $ in $\bv$. 
\\

(v).   The weights of  the coefficients of $a$, $b^{\sv}$, $\phi^{\sv}$ are determined by the $M$-filtration. In fact, much more is true: replacing $b^{\sv}, \phi^{\sv}$ with their `motivic' versions as in \cite{MMV}, \S18.3, the action of the de Rham motivic Galois group of $\MT(\Z)$ on their coefficients is determined by its action on $(b^{\sv},\phi^{\sv})$. Restricting to the subgroup $\G_m$  implies that the weights of the motivic multiple zeta values are induced by the $M$-grading on $\Or(\Ue)$. 
\\

(vi). Since complex conjugation is an involution, the elements $\sv$ and hence $(b^{\sv}, \phi^{\sv})$ satisfy an involution equation which we will not write down.

\begin{rem} \label{remHainMorph} The element  $\psi^{\sv} \in \mathrm{Aut} \, \Pi(\C)$ is image of the single-valued element  $\sv$ defined below, and could  be computed independently  via the periods of $\Pi$. 
In particular, it is compatible  \cite{MMV} with the Hain morphism  $\Phi$  from the motivic fundamental group of the projective line minus three points $\Pro^1\backslash \{0,1,\infty\}$. The image of $\sv$ in its group of automorphisms was computed in \cite{SVMP} and involves single-valued multiple zeta values.  This provides yet another constraint, and hence a method to obtain information about $b^{\sv}$ and $\phi^{\sv}$, which we will not exploit here. 
\end{rem}

 In conclusion,
the elements $b^{\sv}, \phi^{\sv}$  are  very heavily constrained and it should in principle be possible to compute them explicitly to higher orders.

\subsection{Proof of theorem \ref{thmExistbphi}} 
The proof follows closely the argument given in \cite{MMV}, \S19 with some minor differences.   We summarize the main ingredients, and refer to loc. cit. for further details. 

\begin{enumerate} 

\item The objects $\GG^{dR}_{1,1}, \pi_1^{dR}(\Eq,\tone_0)$ are affine group schemes over $\Q$ whose affine rings are the de Rham components of Ind-objects in a category $\mathcal{H}$ of realisations. The objects  in $\mathcal{H}$ consist of triples
$(M_{B}, M_{dR}, c)$
where $M_B, M_{dR}$ are finite-dimensional $\Q$-vector spaces, and $c$ is an isomorphism $M_{dR} \otimes_{\Q} \C \overset{\sim}{\rightarrow} M_B \otimes_{\Q} \C$. In addition, $M_B, M_{dR}$ are equipped with an increasing weight filtration (in our situation, the $M$-filtration) compatible with $c$, and $M_{dR}$ has a decreasing filtration $F$ such that $M_B$,  equipped with the weight filtration and the filtration $c(F)\otimes \C$  on $M_B\otimes_{\Q} \C$ is a  graded polarisable $\Q$-mixed Hodge structure. The final piece of data is  a real Frobenius involution $F_{\infty}: M_B \overset{\sim}{\rightarrow}M_B$ compatible with $c$
and with complex conjugation on both $M_B \otimes \C$ and $M_{dR} \otimes \C$. 

\vspace{0.05in}
\item By a Tannakian argument \cite{MEM} Appendix B, one can choose splittings of the weight and Hodge filtrations $M$ and $F$ in the de Rham realisation of $\mathcal{H}$. They  are compatible with the monodromy homomorphism $\mu$.  Therefore we can identify
$\pi_1^{dR}(\Eq,\tone_0)$ with $\Pi$. The image of $\U_{1,1}^{dR}$, the unipotent radical of $\GG^{dR}_{1,1}$,  under  $\mu$ is by definition $\Ue$: 
$$\mu : \U_{1,1}^{dR} \To  \Ue \leq  \mathrm{Aut}(\Pi)\ .$$
\item Let $G^{dR}_{\mathcal{H}}$ denote the  group of tensor automorphisms of the   fiber functor on the category $\mathcal{H}$  which sends $(M_B, M_{dR}, c)$ to $M_{dR}$. It is an affine group scheme over $\Q$. 
Consider the composition of isomorphisms
$$M_{dR} \otimes     \C \overset{c}{\To} M_B \otimes    \C \overset{F_{\infty}}{\To} M_B \otimes  \C \overset{c^{-1}}{\To} M_{dR} \otimes \C\ .$$
Since it is functorial in $M$, it defines a canonical element 
$$\mathbf{s} \in G^{dR}_{\mathcal{H}}(\C)\ . $$
 Thus for every object $X$ in $\mathcal{H}$, we deduce the existence of $\mathbf{s} \in \mathrm{Aut}(X^{dR})(\C)$ which is compatible with all morphisms in $\mathcal{H}$, and computes the action of the real Frobenius $F_{\infty}$ in the de Rham realisation.

\vspace{0.05in}
\item The element $\mathbf{s}$ is canonical, but it is convenient to modify it as follows. Here our presentation differs slightly from that of \cite{MMV}, \S19.  The action of $G^{dR}_{\mathcal{H}}$  on the de Rham component  of the Lefschetz  object $\Q(-1) = (\Q, \Q, 1 \mapsto 2 \pi i )$ in $\mathcal{H}$
defines a morphism $\pi: G^{dR}_{\mathcal{H}} \rightarrow \mathrm{Aut}(\Q)=  \G_m$. The image of $\mathbf{s}$ under $\pi$ is $-1 \in \C^{\times} =  \G_m(\C)$. Now, the
 choice of splitting of the weight filtration $M$ is equivalent to an action of the  multiplicative group $\G_m$ on the de Rham component of objects of $\mathcal{H}$, i.e., a splitting of the  homomorphism $\pi: G^{dR}_{\mathcal{H}} \rightarrow \G_m $. We now  multiply  $\mathbf{s}$ by the image of  $-1 \in \G_m(\Q)$ under this splitting  to obtain a  modified element  $\sv = (- 1) \mathbf{s} \in G^{dR}_{\mathcal{H}}(\C)$,  which now acts by the identity on $\Q(-1)$. The element $\sv$ depends on the choice of splitting. 
 
 Note that  since $\pi_1(\Eq
 \tone_0)$ has  a mixed Tate Hodge structure, its $M$-filtration is canonically split in the de Rham realisation by $F$, and the action of $\sv$ upon its complex points is  canonical.   The same applies for $\Ue$, and so the statement of the theorem depends in no way on the choices of splittings.

\vspace{0.05in}
\item We therefore deduce the existence of an element 
$$\sv \in \mathrm{Aut}(\GG^{dR}_{1,1})(\C) \times \mathrm{Aut}(\Pi)(\C)$$
which is compatible with $\mu$.  It is the image of the element $\sv \in G^{dR}_{\mathcal{H}}$, which acts compatibly on 
both $\GG^{dR}_{1,1}(\C)$ and $\Pi(\C)$. 
On the other hand, in \cite{MMV} \S10, we gave an explicit description of the group of automorphisms of $\GG^{dR}_{1,1}$.
We showed that,  for any choice of splitting $\GG^{dR}_{1,1} \cong \SL_2 \ltimes \U^{dR}_{1,1}$, any  automorphism  of  $\GG^{dR}_{1,1}$ which acts trivially on $\SL_2$ defines a pair 
$$b \quad \in \quad \U^{dR}_{1,1}(\C) \qquad \hbox{ and } \qquad  \phi \quad \in \quad \mathrm{Aut}(\U^{dR}_{1,1})^{\SL_2}(\C)\ .$$
They are well-defined up to twisting by an element $a\in (\U^{dR}_{1,1})^{\SL_2}(\C)$. 
The (right) action of $(b, \phi)$ on $(g, u) \in (\SL_2 \ltimes \U^{dR}_{1,1})(\C)$ is  $(g, b^{-1}\big|_{g} \phi(u) b)$. 
 There exists, by the argument of  \cite{MEM} Appendix B,  a  splitting 
 $\GG^{dR}_{1,1} \cong \SL_2 \ltimes \U^{dR}_{1,1}$ compatible with the choice of $M$-splittings.   We deduce that the image of $\sv$ under the monodromy homomorphism $\mu$ is represented by a pair 
$$ b^{\sv} \quad \in \quad \Ue(\C) \qquad \hbox{ and } \qquad \phi^{\sv} \quad \in \quad \mathrm{Aut}(\Ue)^{\SL_2}(\C)\ ,$$
which are well-defined up to twisting  by $a \in (\Ue)^{\SL_2}(\C)$. 
It follows from the compatibility of $\sv$ with $\mu$ that the image of $\sv \in \mathrm{Aut}(\Ue)(\C)$ is given by the  automorphism $(b^{\sv})^{-1} \phi^{\sv} b^{\sv} $. It is induced by the automorphism $\sv \in \mathrm{Aut}(\Pi)(\C)$, which we call $\psi^{\sv}$ in the statement of the theorem. 
 
 \vspace{0.05in}
\item Now let us apply the element $\sv$ to the `canonical cocycle'. The Betti component $\GG^B_{1,1}$ of the relative completion $\GG_{1,1}$ admits a natural map 
$$\pi_1^{\mathrm{top}} (\mathcal{M}_{1,1}(\C), \tone_{\infty}) = \SL_2(\Z) \To \GG^B_{1,1}(\Q)\ . $$
This is one of the defining properties of relative completion. We deduce a map 
$$\SL_2(\Z) \To \GG^B_{1,1}(\Q) \overset{c}{\To} \GG^{dR}_{1,1}(\C) \cong \SL^{dR}_2(\C) \ltimes \U^{dR}_{1,1}(\C)$$
where $c$ denotes  the comparison isomorphism $\comp_{B,dR}$ for short. The image of $\gamma \in \SL_2(\Z)$ is 
$(c\gamma,  \mathcal{C}_{\gamma})$,  
where $c\gamma$ is the image of $\gamma$ under (\ref{SL2dRversusB}) and $\mathcal{C}_{\gamma} \in   \U^{dR}_{1,1}(\C)$ is called the canonical cocycle. Its image under $\mu$ is precisely $\CCu_{\gamma}\in \Ue(\C)$.  
Since $\gamma$ is Betti-rational, the action of $F_{\infty}$, corresponds, via the comparison isomorphism, to complex conjugation on coefficients of $c(\gamma)$. This action is computed by the element $\mathbf{s}$. The element $\sv$ computes complex conjugation composed with  the map $-1\in \G_m$ (\ref{sdef}).   Since the affine ring of $\SL_2$ is pure Tate, the latter action is trivial on $\SL_2$ and hence $\sv$ acts trivially on $\gamma$ (this is the reason for preferring $\sv$ over $\mathbf{s}$, which does not).  It follows that   the (right) action of $\sv$ satisfies the equation 
$$ (\gamma,\sv\,   \mathcal{\CCu}_{\gamma})  \circ \sv=   (\gamma,  \mathcal{\CCu}_{\gamma})\ .$$
Writing $\sv$ in terms of $(b^{\sv}, \phi^{\sv})$ above, we deduce that 
$$(b^{\sv})^{-1}\big|_{\gamma}  \phi^{\sv}( \sv \,\CCu_{\gamma})  b^{\sv} =  \CCu_{\gamma}\ .$$

\vspace{0.05in}
\item It remains to compute the coefficients of $\sv$. For this we need the fact that the motivic fundamental group of the punctured Tate elliptic curve (or rather, the image of $\GG_{1,1}$ under the monodromy homomorphism $\mu$) is a mixed Tate motive over $\Z$ \cite{HaMTZ}. It then follows from the results of \cite{SVMP} that, for any mixed Tate motive $M$ over $\Z$, the coefficients of $\sv$  in $\mathrm{Aut}(M_{dR})(\R)$ are single-valued multiple zeta values. 
\end{enumerate}

The  idea of this proof   applies in a  much more general setting.   It may be possible to circumvent the final step (7), which appeals to deep results about the category of mixed Tate motives over $\Z$ by a direct argument following the procedure outlined in remark \ref{remHainMorph}.  However, it is likely that this would forfeit some of the constraints described in \S\ref{SectFurther}, which would become conjectures given the current state of knowledge.

\section{Equivariant iterated Eisenstein integrals} \label{sect: EquivEisen}
We can now define modular-equivariant  iterated integrals of Eisenstein series.

\begin{defn} \label{defnJeqv} Let $(b^{\sv},\phi^{\sv})$ be as in   theorem \ref{thmExistbphi}.  Then define
$$J^{\eqv}(\tau)  =   J (\tau)   (b^{\sv})^{-1}  \phi^{\sv} (\sv J(\tau)^{-1}) \ .$$
It is well-defined up to right multiplication by an element $a\in (\Ue)^{\SL_2}(\ZZ^{\sv})$. 
\end{defn}

\begin{thm}  \label{thmJeqv} The series $J^{\eqv}$ defines a real analytic function 
$$J^{\eqv}: \HH \To \Ue(\C)$$
which is equivariant for the action of $\SL_2(\Z)$:
$$ J^{\eqv}(\gamma\tau )\big|_{\gamma} = J^{\eqv}(\tau) \ .$$
\end{thm}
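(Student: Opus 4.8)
The plan is to verify directly that the defining formula $J^{\eqv}(\tau) = J(\tau)\,(b^{\sv})^{-1}\,\phi^{\sv}(\sv\, J(\tau)^{-1})$ transforms correctly under $\gamma \in \SL_2(\Z)$, using the monodromy identity \eqref{Jmonodromy} for $J$, the cocycle relation \eqref{CCucocycle}, the fundamental relation \eqref{bphionCgamma} of theorem \ref{thmExistbphi}, and the fact that $\phi^{\sv} \in \mathrm{Aut}(\Ue)^{\SL_2}$ (so it commutes with the Betti action of $\SL_2(\Z)$). Real analyticity is immediate: $J$ is real analytic (indeed its coefficients lie in $\C[[q]][\log q]$ by proposition \ref{propIE}), $\sv$ acts by complex conjugation on coefficients composed with the algebraic involution $(-1)$, and $(b^{\sv})^{-1}, \phi^{\sv}$ act algebraically with constant coefficients in $\ZZ^{\sv}$; hence $J^{\eqv}$ is a composite of real analytic maps.

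First I would record the transformation of each factor. From \eqref{Jmonodromy} we have $J(\gamma\tau)\big|_{\gamma} = J(\tau)\, (\CCu_{\gamma})^{-1}$, hence $J(\gamma\tau) = J(\tau)\big|_{\gamma^{-1}}\,\big((\CCu_{\gamma})^{-1}\big)\big|_{\gamma^{-1}}$; it is cleaner to apply $\big|_{\gamma}$ directly. Applying $\sv$ (which commutes with the $\SL_2$-action because, after the twist by $(-1)\in\G_m$, it acts trivially on the pure-Tate object $\SL_2$, exactly as in step (6) of the proof of theorem \ref{thmExistbphi}), one gets $\sv\, J(\gamma\tau)^{-1} = \big(\sv\,\CCu_{\gamma}\big)\,\cdot\,\big(\sv\, J(\tau)^{-1}\big)\big|_{\gamma}$ after moving the $\big|_{\gamma}$ through. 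Then apply $\phi^{\sv}$, using $\phi^{\sv}(x g) = \phi^{\sv}(x) g$ for $g \in \SL_2$, to obtain $\phi^{\sv}\big(\sv\, J(\gamma\tau)^{-1}\big) = \phi^{\sv}\big(\sv\,\CCu_{\gamma}\big)\,\phi^{\sv}\big(\sv\, J(\tau)^{-1}\big)\big|_{\gamma}$.

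Next I would assemble the product. We compute
$$
J^{\eqv}(\gamma\tau)\big|_{\gamma}
= \Big( J(\tau)(\CCu_{\gamma})^{-1}\Big)\,(b^{\sv})^{-1}\big|_{\gamma}\,\phi^{\sv}\big(\sv\,\CCu_{\gamma}\big)\,\phi^{\sv}\big(\sv\, J(\tau)^{-1}\big),
$$
where I have applied $\big|_{\gamma}$ to every factor and used $J(\gamma\tau)\big|_{\gamma}=J(\tau)(\CCu_{\gamma})^{-1}$, $(b^{\sv})^{-1}\big|_{\gamma}$ as written, and the displayed transformation of $\phi^{\sv}(\sv\, J^{-1})$. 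Now insert the key identity \eqref{bphionCgamma} of theorem \ref{thmExistbphi}, namely $(b^{\sv})^{-1}\big|_{\gamma}\,\phi^{\sv}(\sv\,\CCu_{\gamma})\,b^{\sv} = \CCu_{\gamma}$, rewritten as $(b^{\sv})^{-1}\big|_{\gamma}\,\phi^{\sv}(\sv\,\CCu_{\gamma}) = \CCu_{\gamma}\,(b^{\sv})^{-1}$. Substituting gives
$$
J^{\eqv}(\gamma\tau)\big|_{\gamma} = J(\tau)(\CCu_{\gamma})^{-1}\,\CCu_{\gamma}\,(b^{\sv})^{-1}\,\phi^{\sv}\big(\sv\, J(\tau)^{-1}\big) = J(\tau)\,(b^{\sv})^{-1}\,\phi^{\sv}\big(\sv\, J(\tau)^{-1}\big) = J^{\eqv}(\tau),
$$
as required.

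The main obstacle is bookkeeping rather than conceptual: one must be scrupulous about the order of multiplication in the non-abelian group $\Ue(\C)$ and about how $\big|_{\gamma}$ interacts with multiplication, with $\sv$, and with $\phi^{\sv}$ — in particular that $\phi^{\sv}$ commutes with $\big|_{\gamma}$ only because $\phi^{\sv}\in\mathrm{Aut}(\Ue)^{\SL_2}$ and $\gamma$ acts through the Betti embedding $\SL_2(\Z)\hookrightarrow\SL_2^{dR}(\C)$ of \eqref{SL2dRversusB}. One should also check that $J^{\eqv}$ genuinely takes values in $\Ue(\C)$ (not merely a completed enveloping algebra): this holds because $J(\tau)\in\Ue(\C)$ by construction as $\mu I^E$ with $I^E(\tau)\in\U^{dR}_E(\C)$, $b^{\sv}\in\Ue(\ZZ^{\sv})$, and $\phi^{\sv}$ is an automorphism of $\Ue$, so group-likeness is preserved throughout. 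Finally, the ambiguity in $(b^{\sv},\phi^{\sv})$ up to $(ab^{\sv}, a\phi^{\sv}a^{-1})$ with $a\in\Ue(\ZZ^{\sv})^{\SL_2}$ translates, by a short computation using $\phi^{\sv}(\sv J^{-1}) = \phi^{\sv}(\sv J)^{-1}$ and the $\SL_2$-invariance of $a$, into right multiplication of $J^{\eqv}$ by $a$, which accounts for the stated well-definedness.
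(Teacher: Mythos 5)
Your proposal is correct and follows essentially the same computation as the paper's proof: apply $\big|_{\gamma}$ to each factor, use the monodromy relation \eqref{Jmonodromy} together with the $\SL_2$-equivariance of $\phi^{\sv}$ and of $\sv$, and then insert the defining identity \eqref{bphionCgamma} to cancel $\CCu_{\gamma}^{-1}\,\CCu_{\gamma}$. The only quibble is a misplaced $\big|_{\gamma}$ in your intermediate display, which should read $\big(\sv\, J(\gamma\tau)^{-1}\big)\big|_{\gamma} = \big(\sv\,\CCu_{\gamma}\big)\,\big(\sv\, J(\tau)^{-1}\big)$; your assembled product is nonetheless the correct one and coincides with the paper's.
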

\begin{proof} Using the monodromy equation $(\ref{Jmonodromy})$, we compute 
\begin{eqnarray} J^{\eqv}( \gamma\tau) \big|_{\gamma}  &  = &     J (\gamma\tau) \big|_{\gamma} \,  (b^{\sv})^{-1}|_{\gamma}\,  \phi^{\sv} (\sv J (\gamma\tau)^{-1}\big|_{\gamma} ) \nonumber \\
& = & J(\tau)  \CCu_{\gamma}^{-1}  \,  (b^{\sv})^{-1} |_{\gamma}\, \phi^{\sv}( \sv \, \CCu_{\gamma} \,  \sv \, J(\tau)^{-1})  \nonumber \\
& = &  J(\tau)  \CCu_{\gamma}^{-1}  \,  \Big( (b^{\sv})^{-1}|_{\gamma}\,   \phi^{\sv}( \sv \, \CCu_{\gamma})  b^{\sv} \Big) (b^{\sv})^{-1}  \phi^{\sv}( \sv \, J(\tau)^{-1}) \nonumber \\
& = & J(\tau) (b^{\sv})^{-1}  \phi^{\sv}( \sv \, J(\tau)^{-1}) \nonumber \\
& = & J^{\eqv}(\tau) \ . \nonumber 
\end{eqnarray}
The cancellation of terms going from  the third  to the fourth equation follows from the defining property $(\ref{bphionCgamma})$.
\end{proof}
The series $J^{\eqv}$ is of the form 
$$J^{\eqv}  =  J(\tau)  \left( \sv  J(\tau)\right)^{-1}  +  \hbox{ correction terms} \ ,$$
since $b^{\sv}$ and $\phi^{\sv}$ are to lowest order equal to $1$ and the identity respectively. 
Therefore, to leading orders, the coefficients of $J^{\eqv}$ are the real or imaginary parts  of iterated integrals of Eisenstein series of the appropriate length. 
The correction terms involve  linear combinations of products of real and imaginary parts of iterated integrals of Eisenstein series of lower lengths with    single-valued multiple zeta value coefficients. 

\subsection{A digression: `single-valued' versus `equivariant'}
The series $J^{\eqv}(\tau)$ does not quite correspond to the `single-valued version' of $J(\tau)$, which is 
$$J^{\sv}(\tau) := J (\tau)   (b^{\sv})^{-1}  \phi^{\sv} (\sv\, J(\tau)^{-1}) b^{\sv} =  J^{\eqv}(\tau) b^{\sv} \ .$$
Unlike $J^{\eqv}$, it is canonically defined, i.e., does not depend on the choice of representative $(b^{\sv}, \phi^{\sv})$. Its value at the unit tangent vector at the cusp is one:
$$J^{\sv}(\tone_{\infty})=1\ .$$
The generating function $J^{\sv}(\tau)$ is not modular equivariant. In fact, by the previous theorem it satisfies
\begin{eqnarray} 
J^{\sv}(\gamma \tau)\big|_{\gamma}  &= & J^{\eqv}(\gamma \tau)\big|_{\gamma}  b^{\sv}\big|_{\gamma}  \nonumber \\
 & =  & J^{\eqv}(\tau) b^{\sv}\big|_{\gamma} \nonumber  \\
 & = & J^{\sv}(\tau) (b^{\sv})^{-1} b^{\sv}\big|_{\gamma} \nonumber 
\end{eqnarray} 
Thus if we define 
$$\CCu_{\gamma}^{\sv} =  (b^{\sv})^{-1} b^{\sv}\big|_{\gamma}    \quad \in \quad Z^1(\SL_2(\Z) , \Ue(\C))$$
to be the single-valued cocycle, the single-valued generating series $J^{\sv}$ satisfies
$$J^{\sv}(\gamma \tau) \big|_{\gamma} = J^{\sv}(\tau) \CCu_{\gamma}^{\sv}\ .$$
The coefficients of $\CCu_{\gamma}^{\sv}$ are single-valued multiple zeta values since this is true for  $b^{\sv}$. A key point is that the cocycle $\CCu^{\sv}$ is a coboundary. It is for this reason that it can be trivialised (non-canonically) to 
produce a modular equivariant function $J^{\eqv}(\tau)$.

\subsection{Properties of $J^{\eqv}$} Let $(b^{\sv},\phi^{\sv})$ be as in theorem \ref{thmExistbphi}.

\begin{cor}  \label{corPropertiesofJeq} The generating function $J^{\eqv}$ satisfies the differential equation 
\begin{equation}  \label{JeqvODE} d J^{\eqv} =  - \omega J^{\eqv}   + J^{\eqv}  \phi^{\sv}( \sv \,\omega)\ .
\end{equation} 
It is the unique solution whose value at the tangent vector of length one at the cusp is 
$$J^{\eqv}( \tone_{\infty} )    =  (b^{\sv})^{-1} \ .  $$
\end{cor}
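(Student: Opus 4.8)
The plan is to prove Corollary \ref{corPropertiesofJeq} by direct differentiation of the formula in Definition \ref{defnJeqv}, using the differential equation \eqref{dJ} satisfied by $J$, together with the fact that $\phi^{\sv}$ and $\sv$ are (anti)automorphisms of $\Ue$, hence intertwine the relevant connections. First I would recall that $J$ satisfies $dJ = -\omega J$ (Proposition in \S on the generating series $J$) and that $J(\tau)^{-1}$ therefore satisfies $d(J^{-1}) = J^{-1}\omega$. Applying the Leibniz rule to $J^{\eqv}(\tau) = J(\tau)\,(b^{\sv})^{-1}\,\phi^{\sv}(\sv\, J(\tau)^{-1})$, and noting that $(b^{\sv})^{-1}$ is a constant, one gets
\begin{equation*}
dJ^{\eqv} = (dJ)\,(b^{\sv})^{-1}\phi^{\sv}(\sv J^{-1}) + J\,(b^{\sv})^{-1}\,d\bigl(\phi^{\sv}(\sv J^{-1})\bigr)\ .
\end{equation*}
The first term is $-\omega J^{\eqv}$ by \eqref{dJ}. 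For the second term, since $\sv$ acts $\C$-linearly (it is an automorphism of $\Ue$ composed with complex conjugation, but the differential is taken with respect to the holomorphic structure on the trivial bundle so this is compatible) and $\phi^{\sv}$ is an automorphism of group schemes, one has $d\bigl(\phi^{\sv}(\sv J^{-1})\bigr) = \phi^{\sv}\bigl(\sv(d(J^{-1}))\bigr) = \phi^{\sv}\bigl(\sv(J^{-1}\omega)\bigr) = \phi^{\sv}(\sv J^{-1})\,\phi^{\sv}(\sv\omega)$. Substituting back and recognising $J\,(b^{\sv})^{-1}\phi^{\sv}(\sv J^{-1}) = J^{\eqv}$ gives exactly \eqref{JeqvODE}.

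For the initial condition, I would evaluate at the tangential base point $\tone_{\infty}$. By property $(ii)$ of Proposition \ref{propIE} (transported through $\mu$), $J(\tone_{\infty}) = 1$, hence $\sv J(\tone_{\infty})^{-1} = \sv(1) = 1$ and $\phi^{\sv}(1) = 1$, so $J^{\eqv}(\tone_{\infty}) = 1\cdot(b^{\sv})^{-1}\cdot 1 = (b^{\sv})^{-1}$, as claimed. Uniqueness of the solution to \eqref{JeqvODE} with this prescribed value follows from the standard theory of linear ODEs with a fixed base point: \eqref{JeqvODE} is an inhomogeneous-looking but still first-order integrable equation for a group-valued function, and two solutions $J_1^{\eqv}, J_2^{\eqv}$ agreeing at $\tone_{\infty}$ must coincide because their ``ratio'' $J_1^{\eqv}(J_2^{\eqv})^{-1}$ (appropriately interpreted) is constant — alternatively, integrate \eqref{JeqvODE} term by term in the grading to see the coefficients are determined recursively from the value at the cusp.

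The main obstacle, such as it is, is bookkeeping around how $\sv$ interacts with the holomorphic differential $d$: one must check that the composite $\phi^{\sv}\circ\sv$ applied to a $\tau$-dependent group element, differentiated, produces $\phi^{\sv}(\sv(-))$ applied to the derivative, i.e., that the complex conjugation hidden inside $\sv$ does not turn $d$ into $\overline{d}$. The resolution is that $\sv$ acts on the \emph{coefficients} of elements of $\Ue(\C)$ by complex conjugation, but here we apply $\sv$ to $J(\tau)^{-1}$ whose coefficients are (after the substitution encoded in $J^{\eqv}$) treated as the appropriate holomorphic/antiholomorphic combinations; more precisely, $J(\tau)^{-1}$ is holomorphic in $\tau$, and $\sv J(\tau)^{-1}$ is the relevant antiholomorphic object, so $d(\sv J(\tau)^{-1}) = \sv(J(\tau)^{-1}\omega) = (\sv J(\tau)^{-1})(\sv\omega)$ where now $\sv\omega$ is the conjugated one-form. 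Once this compatibility is stated carefully (it is essentially the definition of how $\sv$ extends to the de Rham bundle over $\HH$), the computation above goes through verbatim. Everything else — the Leibniz rule, the evaluation at the cusp, the uniqueness — is routine.
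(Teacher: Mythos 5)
Your proposal is correct and follows essentially the same route as the paper: differentiate the defining formula $J^{\eqv} = J\,(b^{\sv})^{-1}\phi^{\sv}(\sv J^{-1})$ using $dJ^{-1}=J^{-1}\omega$ (obtained from $J^{-1}J=1$), and deduce the value at $\tone_{\infty}$ from $J(\tone_{\infty})=1$ together with the fact that $\phi^{\sv}$ and $\sv$ are group homomorphisms and so preserve the identity. Your extra care about how $\sv$ (which conjugates coefficients, turning holomorphic into antiholomorphic dependence on $\tau$) interacts with $d$ is a point the paper leaves implicit, and your conclusion there is the right one, consistent with the holomorphic/antiholomorphic split of \eqref{JeqvODE} stated just after the corollary.
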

\begin{proof} The differential equation follows from the definition \ref{defnJeqv} together with the observation that 
$J^{-1}$ satisfies the equation $d J^{-1} = J^{-1} \omega$. This follows from differentiating the equation $ J^{-1} J=1$, which implies that 
$(dJ^{-1} ) J - J^{-1} \omega J =0$, and by multiplying on the right by $J^{-1}$.  The formula for the value of $J^{\eqv}$ at $\tone_{\infty}$ is a consequence of the definition, the fact that $J(\tone_{\infty})=1$, and the fact that $\phi^{\sv}, \sv$ are group  homomorphisms, and therefore preserve the identity in $\Ue(\ZZ^{\sv})$. 
\end{proof}

In particular, the holomorphic component of the differential  equation 
$$ \Big({\partial   \over \partial \tau} J^{\eqv}\Big) d\tau = -\omega  J^{\eqv} $$ is canonical, but the anti-holomorphic part
$$\Big({\partial \over   \partial \overline{\tau}}  J^{\eqv}  \Big) d\overline{\tau} =    J^{\eqv}  \phi^{\sv}( \sv\, \omega)$$ depends on the choice of $\phi^{\sv}$. 
The right-hand side is $J^{\eqv} \sv\,  \omega$ to leading order, and so
$  d J^{\eqv} \equiv -\omega J^{\eqv} + J^{\eqv} \sv\,  \omega$ modulo lower order terms.

\begin{lem}  \label{lemComplexConj} Complex conjugation acts on $J^{\eqv}$ via the formula:
$$\sv ( b^{\sv} J^{\eqv} ) =       \widetilde{\phi}^{\sv}  (J^{\eqv} b^{\sv})^{-1}   \ ,$$
where $\widetilde{\phi}^{\sv} =  (-1) \phi^{\sv} (-1)$. 
\end{lem}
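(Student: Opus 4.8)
The plan is to start from the definition $J^{\eqv}(\tau) = J(\tau)\,(b^{\sv})^{-1}\,\phi^{\sv}(\sv\, J(\tau)^{-1})$ and simply apply the homomorphism $\sv$ to both sides of $b^{\sv} J^{\eqv} = b^{\sv} J (b^{\sv})^{-1} \phi^{\sv}(\sv\, J^{-1})$, using that $\sv$ is a group homomorphism $\Ue(\C)\to\Ue(\C)$. Concretely, $\sv$ applied to a product is the product of the $\sv$'s, so $\sv(b^{\sv} J^{\eqv}) = \sv(b^{\sv})\,\sv(J)\,\sv((b^{\sv})^{-1})\,\sv\phi^{\sv}(\sv\, J^{-1})$. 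Now $\sv$ is an involution on coefficients composed with $(-1):\Ue\to\Ue$, and the key algebraic input is how $\sv$ interacts with $\phi^{\sv}$: since $\phi^{\sv}\in\mathrm{Aut}(\Ue)^{\SL_2}(\ZZ^{\sv})$ has single-valued-MZV coefficients, applying $\sv$ (which acts by complex conjugation on coefficients, hence fixes real single-valued MZV's, together with the sign involution $(-1)$) transforms $\phi^{\sv}$ into the conjugated automorphism $\widetilde{\phi}^{\sv} = (-1)\phi^{\sv}(-1)$. More precisely, for any $x\in\Ue(\C)$ one has $\sv(\phi^{\sv}(x)) = \widetilde{\phi}^{\sv}(\sv(x))$, because conjugation of coefficients commutes past $\phi^{\sv}$ up to conjugating its own coefficients (which are real, hence fixed), while the $(-1)$'s must be conjugated through, producing exactly $(-1)\phi^{\sv}(-1)$.

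First I would record the identity $\sv(b^{\sv}) = b^{\sv}$ — or rather, I would track it carefully: $b^{\sv}\in\Ue(\ZZ^{\sv})$ has single-valued-MZV coefficients, but $\sv$ also includes the sign operator $(-1)$, so $\sv(b^{\sv})$ is the image of $b^{\sv}$ under $(-1)$ (the complex conjugation being trivial on real coefficients). I expect the cleanest route is not to evaluate $\sv(b^{\sv})$ in isolation but to use the involutivity property alluded to in \S\ref{SectFurther}(vi): applying $\sv$ twice to $J^{\eqv}$ must return $J^{\eqv}$ (possibly up to the ambiguity by $a\in(\Ue)^{\SL_2}(\ZZ^{\sv})$), and this forces a compatibility between $\sv(b^{\sv})$, $\widetilde{\phi}^{\sv}$ and $(b^{\sv},\phi^{\sv})$. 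Substituting that compatibility into the expansion of $\sv(b^{\sv} J^{\eqv})$ should collapse everything: the factors $\sv(b^{\sv})$ and $\sv((b^{\sv})^{-1})$ cancel, $\sv(J)$ gets absorbed, and $\sv\phi^{\sv}(\sv\, J^{-1}) = \widetilde{\phi}^{\sv}(\sv\sv\, J^{-1}) = \widetilde{\phi}^{\sv}(J^{-1})$ by involutivity of $\sv$ on $J$. Reorganizing, one lands on $\sv(b^{\sv}J^{\eqv}) = \widetilde{\phi}^{\sv}\big((J b^{\sv})^{-1}\big)$, which is the claim after noting $\widetilde{\phi}^{\sv}$ is an automorphism so commutes with inversion, and $(J b^{\sv})^{-1} = (b^{\sv})^{-1} J^{-1}$ matches up with $\widetilde{\phi}^{\sv}$ applied to the appropriate rearrangement of $J^{\eqv}b^{\sv} = J (b^{\sv})^{-1}\phi^{\sv}(\sv J^{-1}) b^{\sv} = J^{\sv}$.

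I expect the main obstacle to be bookkeeping the action of $\sv$ on $b^{\sv}$ and $\phi^{\sv}$ precisely, i.e.\ disentangling the complex-conjugation part (trivial on real single-valued MZV coefficients) from the $(-1)\in\G_m$ part (which genuinely acts and produces the tilde). This is where one must be scrupulous about whether $\sv(b^{\sv})$ equals $b^{\sv}$, $(b^{\sv})$ with a sign twist, or something determined only implicitly by the involution equation of \S\ref{SectFurther}(vi); getting the signs wrong will break the final cancellation. A secondary point to handle with care is that $J^{\eqv}$ is only well-defined up to right multiplication by $a\in(\Ue)^{\SL_2}(\ZZ^{\sv})$, so the stated formula should be read as an identity of equivalence classes (or, having fixed a representative $(b^{\sv},\phi^{\sv})$, as a genuine identity once one checks the ambiguity propagates consistently through both sides). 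Everything else is a direct, if slightly lengthy, manipulation of group elements using only that $\sv$, $\phi^{\sv}$, and $\widetilde{\phi}^{\sv}$ are homomorphisms and that $\sv$ is an involution.
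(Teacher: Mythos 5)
Your strategy is the purely algebraic route that the paper itself mentions immediately after its proof (``the previous result can also be deduced from the fact that \ldots\ $\sv$ \ldots\ are involutions; this implies a cumbersome identity involving $b^{\sv}$, $\phi^{\sv}$ and $\sv$ which we chose to omit''), but as written it has a genuine gap at the central step. Expanding as you propose gives
$\sv(b^{\sv}J^{\eqv}) = \sv(b^{\sv})\,\sv(J)\,\sv(b^{\sv})^{-1}\,\widetilde{\phi}^{\sv}(J^{-1})$,
while the target is
$\widetilde{\phi}^{\sv}\big((J^{\eqv}b^{\sv})^{-1}\big) = \widetilde{\phi}^{\sv}\big((b^{\sv})^{-1}\big)\,\big(\widetilde{\phi}^{\sv}\phi^{\sv}\big)(\sv J)\,\widetilde{\phi}^{\sv}(b^{\sv})\,\widetilde{\phi}^{\sv}(J^{-1})$.
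So what actually has to be proved is that $\widetilde{\phi}^{\sv}\phi^{\sv}$ is the \emph{inner} automorphism whose conjugating element matches up the $\sv(b^{\sv})$- and $\widetilde{\phi}^{\sv}(b^{\sv})$-factors. This is exactly the point where the transcendental series $\sv J$ refuses to ``get absorbed'' by formal manipulation, and it is the content of the cumbersome involution identity the paper declines to write down. You cannot obtain that identity from ``applying $\sv$ twice to $J^{\eqv}$ must return $J^{\eqv}$'': that argument is circular, since having a usable formula for $\sv(J^{\eqv})$ is precisely the statement of the lemma. The identity would have to be extracted from the Tannakian construction in the proof of Theorem \ref{thmExistbphi} (from $\sv^2=\mathrm{id}$ in $G^{dR}_{\mathcal{H}}(\C)$ and the representation of $\SL_2$-trivial automorphisms by pairs $(b,\phi)$ up to twisting by $a$), which your sketch does not do.

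The paper's actual proof sidesteps all of this with an analytic argument whose three ingredients are absent from your proposal: (1) the functions $K^{-1}=(\sv J^{\eqv})^{-1}$ and $F=\sv\,\phi^{\sv}\,\sv(J^{\eqv})$ satisfy first-order differential equations with the \emph{same holomorphic part}, so $F=K^{-1}A$ with $A$ antiholomorphic and modular equivariant; (2) the linear independence of iterated integrals of modular forms and their complex conjugates (Corollary \ref{corLIoverModforms}) forces such an $A$ to be constant; (3) the constant is pinned to $1$ by evaluating at the tangential base point, using $J^{\sv}(\tone_{\infty})=1$. Your preliminary observations ($\sv\circ\phi^{\sv}=\widetilde{\phi}^{\sv}\circ\sv$ because $\phi^{\sv}$ is defined over $\ZZ^{\sv}\subset\R$, and $\sv\sv J^{-1}=J^{-1}$) are correct and would be needed in either approach, but to complete your route you must first state and prove the involution identity for $(b^{\sv},\phi^{\sv})$; without it, the cancellation you describe is an assertion rather than a proof.
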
 
\begin{proof}  The element $K= \sv\, J^{\eqv}$ satisfies the equation 
$$ dK = - \sv\, \omega K + K \sv\, \phi^{\sv} \sv(\omega)$$
and hence 
$$ dK^{-1} = - \sv \,\phi^{\sv}\, \sv(\omega) K^{-1} + K^{-1} \sv( \omega) \ .$$
The element $F= \sv\, \phi^{\sv} \,\sv(J^{\eqv})$ satisfies the equation 
$$ dF =  - \sv \phi^{\sv} \sv (\omega) F + F \sv \phi^{\sv} \sv \phi^{\sv} \sv (\omega)\ .$$
Since the holomorphic components of these two differential equations agree, 
it follows that $F = K^{-1} A$, for some $A: \HH \rightarrow \Ue(\C)$ which  satisfies 
$$A(\gamma \tau )\big|_{\gamma}= A(\tau) \qquad \hbox{ and } \qquad  
{\partial A \over \partial \tau} =0\ .$$
Its modular equivariance follows from the equivariance of $\phi^{\sv}$ and the fact that $\sv$ preserves $\SL_2(\Z)$. 
Consider any coefficient $a: \HH \rightarrow V_{2n}\otimes \C$ of $A$.
It  defines a holomorphic section 
$\overline{a}  \in  \Gamma(  \SL_2(\Z) \backslash \! \!\backslash  \HH ;    \mathbb{V}_{2n})$, where $\mathbb{V}_{2n}$ denotes the vector bundle associated to $V_{2n}$, which is analytic   at the cusp.  Such sections correspond to  modular forms of weight $2n$. 
On the other hand, from $A= KF$ we see that the coefficients of $A$ are linear combinations of iterated integrals of modular forms and their complex conjugates. By  corollary \ref{corLIoverModforms},  $A$ is constant.
We deduce that 
$$\sv (J^{\eqv}) = A\,  \sv \, \phi^{\sv} \,  \sv (J^{\eqv})^{-1}$$
Multiplying by $\sv(b^{\sv})$ on the left and  changing the constant $A$, this  is equivalent to 
$$ \sv  (b^{\sv} J^{\sv} (b^{\sv})^{-1}) = A' \sv\,\phi^{\sv}\, \sv (J^{\sv})^{-1} = A' \widetilde{\phi}^{\sv}  (J^{\sv})^{-1}$$
where $J^{\sv} = J^{\eqv} b^{\sv}$. But $J^{\sv}(\tone_{\infty})=1$ which implies that $A'=1$.
\end{proof}
The previous result can also be deduced from the fact that complex conjugation, and hence the elements $\mathbf{s}$ and $\sv$ in the proof of theorem $ \ref{thmExistbphi}$,  are  involutions. This  implies a cumbersome identity involving  $b^{\sv}$,  $\phi^{\sv}$ and $\sv$ which we chose to omit.

\section{Definition of a class of real-analytic modular forms}

Having defined a modular-equivariant function $J^{\eqv}$, we can extract real-analytic modular forms from its coefficients, which in turn generate the algebra $\MI^E$.

\subsection{Coefficients of $J^{\eqv}$}
For every $\tau \in \HH$, 
we view $J^{\eqv}(\tau) \in \Ue(\C)$ as a homomorphism $\Or(\Ue) \rightarrow \C$. 
For every $\SL_2$-equivariant  $\Q$-linear map
\begin{equation} \label{coefffunction} c:  (V^{dR}_{2n})^{\vee} \To \Or(\Ue)
\end{equation}
consider the composite (all tensor products  are over $\Q$)
$$ \HH\overset{J^{\eqv}}{\To} \mathrm{Hom} (\Or(\Ue),\C) \overset{c}{\To} \mathrm{Hom}((V^{dR}_{2n})^{\vee}, \C) \cong V_{2n} \otimes \C $$
which we call the `coefficient of $c$ in $J^{\eqv}$'.  The last  isomorphism  is induced by  the comparison $V^{dR}_{2n} \otimes     \C \cong V_{2n} \otimes  \C$ (\ref{VBdRcomp}). The coefficient of $c$ is a real analytic map 
$$c(J^{\eqv}): \HH \To V_{2n}\otimes \C\ .$$
Since $J^{\eqv}$ is equivariant, it follows that for all $\gamma \in \SL_2(\Z)$: 
$$c(J^{\eqv})(\gamma \tau) \big|_{\gamma} =  c(J^{\eqv})\ .$$

\subsection{Equivariant sections}The following result  was proved in \cite{ZagFest}, \S7.
\begin{prop}  \label{propmodularformsfromsections}  A real-analytic section  $f: \HH \rightarrow V_{2n}\otimes  \C$
$$f(\tau) = \sum_{r+s=2n}  f^{r,s}(\tau) X^r Y^s$$
can be uniquely written in the form 
\begin{equation} \label{fsection2} f(\tau)= \sum_{r+s=2n} f_{r,s} (\tau)(X-\tau Y)^r (X- \overline{\tau} Y)^s \ , 
\end{equation}
where  $  f_{r,s}(\tau) : \HH \rightarrow \C$ are real analytic. 
Then $f$ is equivariant if and only if  the coefficients $f_{r,s}(\tau)$ are modular of weights $(r,s)$  for all $r,s$.
If  in addition the $f^{r,s}(\tau)$ admit  expansions in the ring 
$ \C[[q, \overline{q}]][\tau, \overline{\tau} ]$
then the $f_{r,s} \in P^{-(r+s)} \mathcal{M}_{r,s}$.
\end{prop}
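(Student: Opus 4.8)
The plan is to deduce all three assertions from a single explicit change of basis in $V_{2n}\otimes\C$, so I would begin there. For each $\tau\in\HH$ we have $\tau-\overline{\tau}=2i\,\mathrm{Im}\,\tau\neq 0$, so $X-\tau Y$ and $X-\overline{\tau}Y$ form a basis of $V_2\otimes\C$ and hence the degree-$2n$ monomials $(X-\tau Y)^r(X-\overline{\tau}Y)^s$ form a basis of $V_{2n}\otimes\C$; this already gives the existence and uniqueness of the $f_{r,s}$. Writing $u=X-\tau Y$ and $v=X-\overline{\tau}Y$, one inverts to get $X=(\tau-\overline{\tau})^{-1}(\tau v-\overline{\tau}u)$ and $Y=(\tau-\overline{\tau})^{-1}(v-u)$, so that $X^rY^s=(\tau-\overline{\tau})^{-2n}(\tau v-\overline{\tau}u)^r(v-u)^s$. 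Expanding the right-hand side in $u,v$ makes the transition matrix between the coordinate functions $(f^{r,s})$ and $(f_{r,s})$ completely explicit: its entries, and those of its inverse, are polynomials in $\tau,\overline{\tau}$ and $(\tau-\overline{\tau})^{-1}$, all of which are real-analytic on $\HH$. Hence the $f_{r,s}$ are real-analytic if and only if the $f^{r,s}$ are.

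Next I would treat equivariance. For $\gamma$ with entries $a,b,c,d$ as in $(\ref{gammaact})$, the relations $(X,Y)|_\gamma=(aX+bY,cX+dY)$, $ad-bc=1$ and $\overline{\gamma\tau}=\gamma\overline{\tau}$ (valid since $\gamma$ has real entries) give the one-line identities $(X-(\gamma\tau)Y)|_\gamma=(c\tau+d)^{-1}(X-\tau Y)$ and $(X-\overline{\gamma\tau}\,Y)|_\gamma=(c\overline{\tau}+d)^{-1}(X-\overline{\tau}Y)$. Substituting the expansion $(\ref{fsection2})$ at $\gamma\tau$ and applying $|_\gamma$, one obtains $f(\gamma\tau)|_\gamma=\sum_{r+s=2n}(c\tau+d)^{-r}(c\overline{\tau}+d)^{-s}f_{r,s}(\gamma\tau)\,(X-\tau Y)^r(X-\overline{\tau}Y)^s$. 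By the uniqueness of the expansion from the previous step, $f(\gamma\tau)|_\gamma=f(\tau)$ for every $\gamma$ is equivalent to $f_{r,s}(\gamma\tau)=(c\tau+d)^r(c\overline{\tau}+d)^sf_{r,s}(\tau)$ for every $\gamma$ and every $r,s$, which is exactly modularity of $f_{r,s}$ of weights $(r,s)$.

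For the expansion statement I would assume $f$ equivariant (so that the $f_{r,s}$ are modular) and that every $f^{r,s}$ lies in $\C[[q,\overline{q}]][\tau,\overline{\tau}]$. The factor $(\tau-\overline{\tau})^{-2n}$ in $X^rY^s$ above, together with the fact that $\tau-\overline{\tau}$ is a nonzero constant multiple of $\LL$ by $(\ref{LLdef})$, shows that $\LL^{r+s}f_{r,s}$ lies in $\C[[q,\overline{q}]][\tau,\overline{\tau}]$. Now I would exploit invariance under $T:\tau\mapsto\tau+1$: the functions $q,\overline{q},\LL$ are $T$-invariant and $f_{r,s}$ is $T$-invariant (its $T$-automorphy factor is $1$), so $g:=\LL^{r+s}f_{r,s}$ is $T$-invariant. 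Writing $g=\sum_{m,n\geq 0}c_{m,n}(\tau,\overline{\tau})\,q^m\overline{q}^n$ with $c_{m,n}$ polynomial and using the linear independence of the functions $q^m\overline{q}^n$ over $\C[\tau,\overline{\tau}]$ on $\HH$, each $c_{m,n}$ satisfies $c_{m,n}(\tau+1,\overline{\tau}+1)=c_{m,n}(\tau,\overline{\tau})$; passing to the coordinates $\tau-\overline{\tau}$ and $\tau+\overline{\tau}$, such a polynomial depends only on $\tau-\overline{\tau}$, hence is a polynomial in $\LL$. Therefore $g\in\C[[q,\overline{q}]][\LL]$, which says that $f_{r,s}$ has an expansion of the shape $(\ref{intro: fqcoeffs})$ with poles in $\LL$ of order at most $r+s$; combined with modularity this is precisely the assertion $f_{r,s}\in P^{-(r+s)}\mathcal{M}_{r,s}$ (see $(\ref{Polefilt})$).

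I expect the only part needing any care is the bookkeeping in the first step: one should get $X^rY^s$ from the closed formulas for $X,Y$ in terms of $u,v$ rather than by inverting the transition matrix entrywise, precisely so that the denominator that appears is exactly $(\tau-\overline{\tau})^{r+s}$ and not some larger power — this is what pins down the sharp pole bound $P^{-(r+s)}$. The single genuinely non-formal ingredient is the linear independence of the monomials $q^m\overline{q}^n$ over $\C[\tau,\overline{\tau}]$, which follows from the transcendence of $e^{2\pi i\tau}$ over $\C(\tau)$ applied first in $\tau$ and then in $\overline{\tau}$; I would cite this rather than reprove it. Everything else is routine linear algebra together with the elementary transformation rules recalled above.
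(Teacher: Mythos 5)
Your proposal is correct and follows essentially the same route as the paper, which simply records the invertible change of variables $(\ref{XYinverse})$ and defers the details to \cite{ZagFest} \S 7; your computation of $X,Y$ in terms of $X-\tau Y$ and $X-\overline{\tau}Y$ reproduces exactly that substitution, and your $T$-invariance argument for the pole bound is the same mechanism the paper invokes elsewhere via lemma 2.2 of \cite{ZagFest}. The details you supply (the automorphy of $X-\tau Y$, the sharp power $(\tau-\overline{\tau})^{-(r+s)}$, and the linear independence of the $q^m\overline{q}^n$ over $\C[\tau,\overline{\tau}]$) are all accurate.
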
 
The proposition follows from applying the invertible change of variables
\begin{eqnarray}  \label{XYinverse}
X &  \mapsto  &   \frac{ \tau}{\tau- \overline{\tau}}  (X-\overline{\tau}Y) -   \frac{  \overline{\tau}}{\tau- \overline{\tau}}  (X- \tau Y)     \\
 Y &  \mapsto  &   \frac{1}{\tau- \overline{\tau}}  (X-\overline{\tau}Y) -   \frac{1}{\tau-\overline{\tau}}  (X- \tau Y)  \ .\nonumber 
 \end{eqnarray} 
 In the de Rham basis of $V^{dR}_{2n}$, this corresponds to 
 \begin{eqnarray}  \label{dRXYinverse}
\Xv &  \mapsto  &   \frac{ \log(q)}{2   \LL}  (\Xv +  \log(\overline{q})\Yv) +  \frac{ \log (\overline{q})}{2 \LL}  (\Xv-  \log(q) \Yv)     \\
 \Yv &  \mapsto  &   \frac{1}{2 \LL}  (\Xv +  \log(\overline{q})\Yv) -   \frac{1}{2 \LL}  (\Xv-  \log(q) \Yv)  \nonumber 
 \end{eqnarray}

\subsection{Definition of the space $\MI^E$} \label{sect: ModComp}
Define   the \emph{modular components}
$c_{r,s} (J^{\eqv})$
of a coefficient function $c$ to be the functions defined in the manner of proposition \ref{propmodularformsfromsections}.  
 They are the unique functions satisfying 
$$\sum_{r+s=2n} c_{r,s}  (X-z Y)^r (X-\overline{z} Y)^s  =  c(J^{\eqv})(X,Y) \ .$$
We shall show in theorem \ref{thm: SVMZVcoeff} below  that the $c_{r,s}(J^{\eqv})$ lie in $\mathcal{M}_{r,s}$. 
\begin{defn}
Let $\MI^E\subset \mathcal{M}$ denote the  $\ZZ^{\sv}$-module generated by all  modular components $c_{r,s}$ of all coefficients of $J^{\eqv}$.   \end{defn}

The space $\MI^E$ is well-defined, i.e., independent of the choices  $b^{\sv}, \phi^{\sv}$ in theorem \ref{thmExistbphi}.
Multiplying  $J^{\eqv}$ on the right by an element $a \in (\Ue)^{\SL_2}(\ZZ^{\sv})$ amounts to modifying the  coefficients $c$ by a linear transformation with coefficients in  $\ZZ^{\sv}$.

\section{First properties of $\MI^E$} \label{sect: FirstProp}

Since the space $\MI^E$ is generated by the coefficients (\ref{coefffunction}), it inherits a number of structures from the affine ring
 $\Or(\Ue)$.  Some first examples are discussed here.
 
\subsection{Multiplicative structure}

\begin{prop}  \label{prop: algebra} The space $\MI^E[\LL^{\pm}]$ is closed under multiplication.
\end{prop}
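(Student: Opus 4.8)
The plan is to deduce closure under multiplication of $\MI^E[\LL^{\pm}]$ from the fact that $\Or(\Ue)$ is a Hopf algebra, so its coproduct dualizes to a multiplication on $\Ue$, together with the tensor-structure on $\SL_2$-representations. Concretely, $J^{\eqv}(\tau)$ is a point of $\Ue(\C)$, i.e. a group-like element, so for any two $\SL_2$-equivariant coefficient functions $c_1 : (V^{dR}_{2m})^\vee \to \Or(\Ue)$ and $c_2 : (V^{dR}_{2n})^\vee \to \Or(\Ue)$, the product $c_1(J^{\eqv}) \otimes c_2(J^{\eqv})$ evaluated against the coproduct of $\Or(\Ue)$ is again the value of $J^{\eqv}$ on a coefficient function. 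More precisely, the multiplication map $\Or(\Ue)\otimes \Or(\Ue)\to\Or(\Ue)$ (the shuffle product on the tensor coalgebra) is $\SL_2$-equivariant, and composing
$$(V^{dR}_{2m})^\vee \otimes (V^{dR}_{2n})^\vee \xrightarrow{c_1\otimes c_2} \Or(\Ue)\otimes\Or(\Ue)\xrightarrow{\mathrm{mult}} \Or(\Ue)$$
produces a new $\SL_2$-equivariant map out of $(V^{dR}_{2m}\otimes V^{dR}_{2n})^\vee$. Its coefficient in $J^{\eqv}$ is, by the group-like property, exactly the product $c_1(J^{\eqv})\cdot c_2(J^{\eqv})$ as functions valued in $(V_{2m}\otimes V_{2n})\otimes\C$.

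The next step is to decompose the target representation. Using the Clebsch–Gordan isomorphism $V_{2m}\otimes V_{2n}\cong \bigoplus_{k=0}^{\min(m,n)} V_{2m+2n-2k}$ from \S\ref{sectSL2}, realized by the explicit projectors $\delta^k$, one projects the composite coefficient function onto each irreducible summand $V_{2m+2n-2k}$. This expresses the vector-valued function $c_1(J^{\eqv})\cdot c_2(J^{\eqv})$ as a sum of coefficients $c^{(k)}(J^{\eqv})$, each taking values in a single $V_{2w}$, $w=m+n-k$. Then I apply Proposition \ref{propmodularformsfromsections} (equivariant sections): passing from the monomial basis $X^rY^s$ to the basis $(X-\tau Y)^r(X-\overline\tau Y)^s$ is the \emph{same} linear change of variables \eqref{XYinverse} applied to a product, and since that change of variables is multiplicative in the polynomial ring $\C[X,Y]$, the modular components of the product are $\Q[\LL^{\pm}]$-bilinear combinations of products $c_{1,r_1,s_1}(J^{\eqv})\cdot c_{2,r_2,s_2}(J^{\eqv})$ — the powers of $\LL$ enter precisely because $\tau-\overline\tau$ appears in the denominators of \eqref{XYinverse}/\eqref{dRXYinverse}. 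This is exactly why one must invert $\LL$. Tracking modular weights: if $f\in\mathcal M_{r_1,s_1}$ and $g\in\mathcal M_{r_2,s_2}$ then $fg$ is modular of weights $(r_1+r_2,s_1+s_2)$, so the modular components of the product land in the expected bigraded pieces of $\mathcal M$, and one checks they remain in the $P^{\bullet}$-filtered subspace \eqref{FirstQuadPoles} up to the $\LL^{\pm}$ already allowed.

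The main obstacle, and the point requiring care, is \emph{bookkeeping of the change of basis together with the $\delta^k$ projectors}: one must verify that projecting the product onto $V_{2w}$ in the de Rham basis $\Xv,\Yv$ and then converting to the $(X-\tau Y, X-\overline\tau Y)$ basis is compatible — via Remark \ref{remdeltadR} ($\delta^k_{dR}$ vs.\ $(2\pi i)^k\delta^k$) — with first converting each factor and then multiplying and projecting. This is where the powers of $2\pi i$ and $\LL$ must be matched exactly, and it is the only non-formal input; everything else (group-like $\Rightarrow$ multiplicativity of coefficients, $\SL_2$-equivariance of shuffle, that $\ZZ^{\sv}$ is a ring so the $\ZZ^{\sv}$-module structure is preserved) is essentially formal. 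I would also note that the statement is only asserted for $\MI^E[\LL^{\pm}]$, not $\MI^E$ itself, precisely because of the $\tau-\overline\tau$ denominators, so no attempt should be made to clear them.
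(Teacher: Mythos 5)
Your proposal is correct and follows essentially the same route as the paper: multiply coefficient functions via the ($\SL_2$-equivariant) multiplication on $\Or(\Ue)$, decompose the target by Clebsch--Gordan into the isotypic pieces $\dV^{dR}_{2m+2n-2k}$, and track the change of basis \eqref{dRXYinverse} to see where the powers of $\LL$ come from. The one point you gloss over is that a priori the change of basis produces coefficients in $\Q[\LL^{\pm},\log q,\log \overline{q}]$, since the numerators of \eqref{XYinverse} involve $\tau$ and $\overline{\tau}$ separately and not only $\tau-\overline{\tau}$; the paper disposes of this in one line by noting that the coefficients must be $T$-invariant, hence lie in $\Q[\LL^{\pm}]$.
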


\begin{proof}
Since $\Or(\Ue)$ is a commutative ring,   any two $\SL_2$-equivariant maps 
$$c : \dV^{dR}_{2m}\To \Or(\Ue) \quad  \hbox{ and } \quad c' : \dV^{dR}_{2n} \To \Or(\Ue)$$
can be multiplied together via 
$$   \dV^{dR}_{2m} \otimes    \dV^{dR}_{2n} \,  \overset{c\otimes c'}{\To} \, \Or(\Ue) \otimes \Or(\Ue) \,  \To  \, \Or(\Ue)$$
where the second map is  the multiplication on $\Or(\Ue)$. By composing with the inclusion of an isotypical  factor in an $\SL_2$-equivariant decomposition
$$\dV^{dR}_{2m+2n-2k} \quad  \hookrightarrow  \quad \dV^{dR}_{2m+2n} \oplus \ldots \oplus \dV^{dR}_{2|m-n|} \quad  \cong  \quad \dV^{dR}_{2m} \otimes \dV^{dR}_{2n}  $$
of the left hand side, we deduce  that the product of coefficients $c,c'$ can be decomposed as a linear combination of coefficients of the form
$$c_k'': \dV^{dR}_{2m+2n-2k}  \To \Or(\Ue)$$
for  $0\leq k \leq \min\{m,n\}$.  Since, furthermore, the transformation $(\ref{dRXYinverse})$  is linear with coefficients in  $\Q[\LL^{\pm}, \log q  , \log \overline{q}]$, it follows that every product  $c_{r,s} . c'_{r',s'}$ of modular components  (\S \ref{sect: ModComp})
can in turn be expressed as a linear combination, with coefficients in $\Q[\LL^{\pm} , \log q , \log \overline{q}]$, of $c''_{r+r',s+s'}$. In fact, the coefficients must lie in $\Q[\LL^{\pm}]$,  because any modular form
satisfying (\ref{gammaact})  is translation-invariant, and so  
the coefficients in this formula   must lie in the  $T$-invariant subspace of  $\Q[\LL^{\pm}, \log q  , \log \overline{q}]$, which is exactly  $\Q[\LL^{\pm}]$  
(see \cite{ZagFest}, lemma 2.2). 
\end{proof} 

In particular, for any $r,s,r',s'\geq 0$,
\begin{equation}  \label{MIEproduct}
\MI^E_{r,s} \times \MI^E_{r',s'} \quad \subset \quad  \MI^E[\LL^{\pm}]_{r+r',s+s'}\ .
\end{equation} 
Note that $\MI^E$ itself is not an algebra. 
\subsection{Complex conjugation}
The space $\MI^E$ is stable under the action of complex conjugation. This follows from the definition and lemma \ref{lemComplexConj}, since the coefficients of $b^{\sv}$ and $\phi^{\sv}$ lie in the ring $\ZZ^{\sv} \subset \MI^E$. 

\subsection{Length filtration}
The affine ring $\Or(\Ue)$ is equipped with an increasing length filtration $L$, which is dual to the lower central series on $\ue$. 

\begin{defn}This induces an increasing filtration 
$$\MI^E_{\ell} = \langle c_{r,s} \ : \      \mathrm{Im}\, (c) \subseteq  L_{\ell}  \Or(\Ue) \rangle_{\ZZ^{\sv}}\ .$$
\end{defn}
Elements in $\MI^E_{\ell}$ are generated by  the real and imaginary parts of iterated integrals of Eisenstein series of length less than or equal to $\ell$. 
The length filtration is compatible with the multiplicative structure defined above, since $L_{u} L_v \subset L_{u+v}$. 

\subsection{$M$-grading}
The affine ring $\Or(\Ue)$ is non-negatively graded  by the $M$-degree, and furthermore, the $M$-degree is compatible with the action of $\SL_2$. In particular, 
the  generators (\ref{XYderivs}) of $\ssl_2$ have $M$-degrees $-1,0,+1$. 

Likewise, the space $V^{dR}_{2n}$ inherits an $M$-grading  with degrees $0, -1, \ldots, -2n$ from $(\ref{degsXY})$. Its dual  $\dV^{dR}_{2n}$ has $M$-degrees $0,1,\ldots, 2n$. 

\begin{lem} Every non-zero coefficient function 
$$c: \dV^{dR}_{2n} \To \Or(\Ue)$$
is  a map of $M$-graded $\SL_2$-modules of degree $m$, for some  $m \geq 0$. 
It follows that its modular components $c_{r,s}$  inherit  an $M$-grading from $\Or(\Ue)$ satisfying
$$\deg_M c_{r,s} = \deg_M c =m  \qquad \hbox{ for all } r, s  \geq 0  \  . $$ 
\end{lem}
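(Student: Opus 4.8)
The plan is to combine two facts from the preceding material: that $\dV^{dR}_{2n}$ is an irreducible $\SL_2$-representation (it is dual to $\mathrm{Sym}^{2n}$ of the standard two-dimensional representation), and that the $M$-grading on $\Or(\Ue)$ is compatible with the $\SL_2$-action, in the concrete sense recorded above that the generators \eqref{XYderivs} of $\ssl_2$ act by operators of $M$-degrees $-1$, $0$ and $+1$. I read the conclusion, that $c$ is a map of $M$-graded $\SL_2$-modules of degree $m$, as saying that $c$ raises $M$-degree uniformly by $m$, i.e.\ $c\big((\dV^{dR}_{2n})_k\big)\subseteq\Or(\Ue)_{k+m}$ for all $k$; this is exactly what happens for the coefficient function giving the real-analytic Eisenstein series, where $m=1$, consistently with the fact that $\mathcal{E}_{r,s}$ and $\LL$ have $M$-degree $1$.

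First I would make the reduction to a homogeneous $c$. Decompose the linear map $c$ as $\sum_m c^{(m)}$, where $c^{(m)}$ is the part that raises $M$-degree by exactly $m$. The key point is that each $c^{(m)}$ is still $\SL_2$-equivariant, and this is where the phrase \emph{the $M$-degree is compatible with $\SL_2$} does its work: since the $\ssl_2$-generators are $M$-homogeneous of degrees $0$ and $\pm1$, the $M$-grading is the weight decomposition for a cocharacter that normalises $\SL_2^{dR}$ and acts on $\ssl_2$ through the inner action of the diagonal torus. Untwisting the $M$-grading by that torus therefore yields a second grading of $\Or(\Ue)$ whose graded pieces are genuine $\SL_2$-submodules and which differs from the $M$-grading only by a function of the $\hh$-eigenvalue; as $c$ commutes with $\hh$, it is homogeneous for one grading if and only if it is homogeneous for the other, with the same shift. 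Consequently $\Hom_{\SL_2}(\dV^{dR}_{2n},\Or(\Ue))$ splits into pieces each of which raises $M$-degree by a fixed integer, so after replacing $c$ by one of its homogeneous components — harmless, since the modular components of a sum of coefficient functions are the corresponding sums, and an analytic function on $\HH$ has uniquely determined $M$-homogeneous parts — we may assume $c$ raises $M$-degree by a single integer $m$. That $m\geq 0$ is then immediate: $c$ is injective because $\dV^{dR}_{2n}$ is irreducible and $c\neq0$, hence non-zero on the $M$-degree-$0$ line dual to $\Xv^{2n}$, whose image lies in $\Or(\Ue)_m$, while $\Or(\Ue)$ is non-negatively graded.

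Finally I would carry the $M$-degree over to the modular components. The series $J^{\eqv}$ is $M$-homogeneous of degree $0$ once $\LL$, $\log q$ and $\log\overline{q}$ are assigned $M$-degree $1$ and each single-valued multiple zeta value its weight; this follows from the differential equation \eqref{JeqvODE}, since $\Omega^E$ and $\sv\,\omega$ are one-forms of $M$-degree $0$ and $J^{\eqv}(\tone_{\infty})=(b^{\sv})^{-1}$ has $M$-degree $0$. Pairing with the homogeneous degree-$m$ map $c$ then produces a function $c(J^{\eqv})$ valued in $V^{dR}_{2n}\otimes\C$ all of whose coordinate functions have $M$-degree $m$, and the substitution \eqref{dRXYinverse}, which passes to the basis $\Xv-\log(q)\Yv,\ \Xv+\log(\overline{q})\Yv$ and has coefficients in $\Q[\LL^{\pm},\log q,\log\overline{q}]$, preserves $M$-degrees term by term; hence $\deg_M c_{r,s}(J^{\eqv})=m$ for every $r,s\geq0$. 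The main obstacle is the untwisting argument of the middle paragraph, which is what makes precise in what sense the $M$-grading and the $\SL_2$-action fit together; a minor technical nuisance there is that the twisting torus acts with half-integral weights on $\dV^{dR}_{2n}$, but this causes no trouble because one only ever compares $M$-degrees of elements within a single $\SL_2$-isotypic component.
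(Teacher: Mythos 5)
Your proof is correct and follows essentially the same route as the paper: compatibility of the $M$-grading with the $\SL_2$-action forces an equivariant map out of the irreducible $\dV^{dR}_{2n}$ to be (a sum of) $M$-homogeneous maps, non-negativity of the grading on $\Or(\Ue)$ together with the degree-$0$ line of $\dV^{dR}_{2n}$ gives $m\geq 0$, and the change of variables \eqref{dRXYinverse} is homogeneous of degree zero once $\deg_M\LL=1$. Your middle paragraph merely spells out, via the untwisting by the diagonal torus, the decomposition into homogeneous components that the paper compresses into the remark that a coefficient function is generated by its image on a lowest weight vector.
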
 
\begin{proof} The fact that  a coefficient function respects the $M$-grading follows from the compatibility between the $M$-grading and $\SL_2$-action:
any coefficient function is generated, under the action of $\SL_2$, by its image on a lowest weight vector. 
Since $\Or(\Ue)$ is concentrated in non-negative $M$-degrees,  and $\V^{dR}_{2n}$ has a component in $M$-degree $0$, it follows that 
any coefficient function must have degree $m\geq 0$. 
 
 Finally,   we define
  $\deg_M 2 \pi i \tau = \deg_M 2 \pi i \overline{\tau}=1$. This  implies that 
 \begin{equation} \label{degMLL} \deg_M \LL = 1\ .
\end{equation} 
 With these definitions\footnote{Note that the $M$-degree of $\LL$ is compatible with the Hodge-theoretic weight of the function $2\, \log|q|$, which is the single-valued period
of a family of Kummer extensions, and has Hodge-theoretic weight $2$.  Recall that $M$ is normalised to be one half of the Hodge theoretic weight. The motivic Lefschetz period  which is associated to $2\pi i$ also has Hodge-theoretic weight $2$, so these definitions are forced upon us from the fact that $\tau$ and $\overline{\tau}$ have $M$-degree zero. },  the terms in  $(\ref{dRXYinverse})$ are homogeneous  of degree zero, since 
 $\deg_M X=0$ and $\deg_M Y=-1$. Therefore $\deg_M c_{r,s} = \deg_M c^{r,s}= \deg_M c$. 
\end{proof}

\begin{defn}
The $M$-grading defines an increasing filtration  in the usual manner, and 
  extends to an $M$-filtration on $\MI^E$, where the $M$-filtration on $\ZZ^{\sv}$ is half the usual Hodge-theoretic weight filtration (the multiple zeta value weight). 
 \end{defn}
 
 The  $M$-filtration is well-defined  (independent of the choice of representatives $b^{\sv}, \phi^{\sv}$ in theorem \ref{thmExistbphi}) since it is induced by the $M$-filtration on $\Or(\Ue)$. Another way to say this is that the coproduct on $\Or(\Ue)$ dual to multiplication in $\Ue$ is compatible with the $M$-filtration, so twisting by an element $a$, as in theorem \ref{thmExistbphi}, only modifies the coefficient functions by elements of lower $M$-weight. 
 
  \begin{rem} 
 Conjecturally, the weight filtration is a grading on $\ZZ^{\sv}$.   One way to exploit this is to work with motivic periods instead of real numbers, since  motivic single-valued multiple zeta values are indeed weight-graded. This amounts to replacing $b^{\sv}, \phi^{\sv}$ with their `motivic' versions, defined in 
 \cite{MMV}.  Then the weight grading on the motivic periods is precisely dual to the $M$-grading on $\Or(\Ue)$. 
 It remains to verify that this grading is not disturbed when extracting coefficients and  modular components \S\ref{sect: ModComp}.
 To see this, note that the  comparison isomorphism $(\ref{VBdRcomp})$ must be replaced with the universal comparison isomorphism in which one replaces $2\pi i$ by its motivic version $(2 \pi i)^{\mm}$, which  is  Tate, and $M$-graded of degree $1$. 
 The motivic version of the change of variables (\ref{dRXYinverse}), in which one replaces $2 \pi i$ by $(2 \pi i)^{\mm}$,  is therefore homogeneous of total degree $0$,
 which implies that $\deg_M c_{r,s} = \deg_M c$. 
 It follows that  the $M$-grading on $\Or(\Ue)$ induces an  $M$-grading on all coefficient functions.    This leads to a class of `motivic' modular forms which are formal expansions  \eqref{intro: fqcoeffs} whose coefficients    are motivic periods (in this case, motivic  single-valued multiple zeta values). Their image under the period homomorphism are genuine modular forms in $\mathcal{M}$.   This will be discussed elsewhere. 
Note that  the $M$-grading can be used to determine the  powers of $\LL$ in the right-hand side of (\ref{MIEproduct}).
 \end{rem}
 
 Since  the generators of $\ue$ have strictly negative $M$-degrees,  and since $M_0 \ZZ^{\sv} \cong \Q$ we deduce that 
$M_0 \MI^E = \Q$.

\subsection{Expansions and coefficients}
\begin{thm}  \label{thm: SVMZVcoeff} Every element of $\MI^E$ admits an expansion in the ring
$$\ZZ^{\sv} [[q, \overline{q}]][\LL^{\pm}]\ ,$$
i.e., its coefficients are single-valued multiple zeta values.  An element of total modular  weight $w$ has poles in $\LL$ of order at most $w$. 
\end{thm}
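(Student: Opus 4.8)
The plan is to track, through the defining formula $J^{\eqv}(\tau) = J(\tau)\,(b^{\sv})^{-1}\,\phi^{\sv}(\sv\,J(\tau)^{-1})$ of Definition \ref{defnJeqv}, the ring in which the coefficients of $J^{\eqv}$ lie, then to pass to the modular components via the change of variables (\ref{dRXYinverse}), and finally to remove the spurious analytic variables by a $T$-invariance argument. First I would record that $J$ itself has coefficients in $\Q[[q]][\log q]$: by the second line of (\ref{EunderlineasGE}) the forms $\underline{E}_{2n+2}$ have coefficients in $\Q[[q]][\log q]\,\tfrac{dq}{q}$ with residue at $q=0$ a rational polynomial in $\log q$, the map $\mu$ is defined over $\Q$ (Definition \ref{defnuEtoue}), and $\Q[[q]][\log q]$ is closed under primitives with respect to $\tfrac{dq}{q}$; hence the argument proving the last assertion of Proposition \ref{propIE} applies verbatim with $\Q$ in place of $\C$ and shows that every coefficient of $J=\mu\,I^E$ lies in $\Q[[q]][\log q]$, with $\log q$-degree bounded in terms of length and weight. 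Complex conjugation sends $q\mapsto\overline q$, $\log q\mapsto\log\overline q$ and fixes $\Q$, and the sign involution $(-1)$ of $\Ue$ only changes signs of coefficients, so $\sv\,J(\tau)^{-1}$ has coefficients in $\Q[[\overline q]][\log\overline q]$; since $b^{\sv}\in\Ue(\ZZ^{\sv})$ and $\phi^{\sv}\in\mathrm{Aut}(\Ue)(\ZZ^{\sv})$ by Theorem \ref{thmExistbphi}, I conclude that every coefficient of $J^{\eqv}(\tau)$ lies in $R:=\ZZ^{\sv}[[q,\overline q]][\log q,\log\overline q]$, with bounded degrees in $\log q$ and $\log\overline q$.

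Next I would pass to the modular components. For an $\SL_2$-equivariant map $c\colon(V^{dR}_{2n})^{\vee}\to\Or(\Ue)$ the function $c(J^{\eqv})$ is, in the de Rham basis of $V^{dR}_{2n}$, a $\ZZ^{\sv}$-linear combination of finitely many coefficients of $J^{\eqv}$, so has coordinates in $R$ with bounded $\log q,\log\overline q$-degree. The transformation (\ref{dRXYinverse}) writes each of $\Xv,\Yv$ as $\LL^{-1}$ times a degree $\le 1$ polynomial in $\log q,\log\overline q$ with rational coefficients applied to $(\Xv-\log q\,\Yv)$ and $(\Xv+\log\overline q\,\Yv)$; therefore a monomial of degree $w=r+s$ picks up at most a factor $\LL^{-w}$, and the modular components satisfy $c_{r,s}(J^{\eqv})\in R[\LL^{-1}]$ with pole order in $\LL$ at most $w$. (This last point also follows directly from Proposition \ref{propmodularformsfromsections}, since the coordinates of $c(J^{\eqv})$ lie in $\C[[q,\overline q]][\tau,\overline\tau]$, which already places $c_{r,s}(J^{\eqv})$ in $P^{-w}\mathcal{M}_{r,s}$.) It then remains only to show that the coefficients of the expansion (\ref{intro: fqcoeffs}) of $c_{r,s}(J^{\eqv})$ lie in $\ZZ^{\sv}$ rather than merely in $\C$.

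For this I would use equivariance. By Theorem \ref{thmJeqv} and Proposition \ref{propmodularformsfromsections} the component $c_{r,s}(J^{\eqv})$ is modular of weights $(r,s)$, hence, since $T$ has trivial automorphy factor, invariant under $\tau\mapsto\tau+1$. Writing it near the cusp as $\sum a_{m,n,i,j,k}\,q^m\overline q^n(\log q)^i(\log\overline q)^j\LL^k$ with $a_{m,n,i,j,k}\in\ZZ^{\sv}$, $i,j\ge 0$ bounded and $k\ge -w$, I would substitute $\log q=\LL+2\pi i x$, $\log\overline q=\LL-2\pi i x$ and $q^m\overline q^n=e^{(m+n)\LL}e^{2\pi i(m-n)x}$ with $x=\mathrm{Re}(\tau)$; this groups the function into Fourier characters $e^{2\pi i\ell x}$ whose coefficients are polynomials in $x$ of bounded degree, and $T$-invariance forces each of these polynomials to be constant in $x$, so one may set $x=0$. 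Setting $x=0$ annihilates every power of $2\pi i$, turns $(\log q)^i(\log\overline q)^j$ into $\LL^{i+j}$ and $e^{(m+n)\LL}e^{2\pi i(m-n)x}$ back into $q^m\overline q^n$, yielding $c_{r,s}(J^{\eqv})=\sum q^m\overline q^n\big(\sum_k b_{m,n,k}\LL^k\big)$ with $b_{m,n,k}\in\ZZ^{\sv}$ and $k\ge -w$; by uniqueness of (\ref{intro: fqcoeffs}) this is the desired expansion, and the theorem follows because $\MI^E$ is the $\ZZ^{\sv}$-module generated by the $c_{r,s}(J^{\eqv})$. I expect this last step to be the main obstacle: both (\ref{dRXYinverse}) and the action of $\SL_2(\Z)$ bring in powers of $2\pi i$, which are not single-valued multiple zeta values, so one cannot argue $\ZZ^{\sv}$-rationality termwise; what saves the day is that the $2\pi i$-laden part of the expansion is precisely its $x$-dependent part, which is killed by $T$-invariance — the genus-one counterpart of \cite{ZagFest}, Lemma 2.2. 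A secondary point to get right is the rationality (over $\Q$ rather than $\C$) of the $q$-expansion of $J$ at the cusp in the first step, which is what makes the final module structure be over $\ZZ^{\sv}$ and not merely over $\C$.
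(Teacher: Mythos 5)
Your proposal is correct and follows essentially the same route as the paper's proof: track the coefficient ring through the definition of $J^{\eqv}$ (rationality of the $q$-expansion of $J$, the $\ZZ^{\sv}$-coefficients of $b^{\sv},\phi^{\sv}$ from theorem \ref{thmExistbphi}, the change of variables (\ref{dRXYinverse}) over $\Q[\LL^{\pm},\log q,\log\overline{q}]$), then invoke $T$-invariance to cut down to $\ZZ^{\sv}[[q,\overline{q}]][\LL^{\pm}]$ and read off the pole bound from (\ref{dRXYinverse}). Your explicit Fourier-character argument for the $T$-invariance step is just a spelled-out version of the paper's citation of \cite{ZagFest}, lemma 2.2.
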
 
\begin{proof} By theorem \ref{thmExistbphi}, the coefficients of $b^{\sv}, \phi^{\sv}$ are in the ring $\ZZ^{\sv}$. 
It follows from the definition of $J^{\eqv}$ that its coefficients are $\ZZ^{\sv}$-linear combinations of real and imaginary parts of coefficients of $J$. 
By proposition \ref{propIE}, the latter are iterated integrals with   expansions in  the ring $\Q [[q]][\log q]$.
Finally, the change of variables $\ref{dRXYinverse}$ is defined over $\Q[\LL^{\pm} , \log q, \log \overline{q}]$. This proves that  every element of
$\MI^E$ admits an expansion in 
$$\ZZ^{\sv} [[q, \overline{q}]][\LL^{\pm} ,\log q, \log \overline{q} ]\ .$$
A modular form is  translation invariant, so its expansion lies in the subring of $T$-invariants, which is precisely
$\ZZ^{\sv} [[q, \overline{q}]][\LL^{\pm}]$ (see \cite{ZagFest}, lemma 2.2).  The bound on the order of poles in $\LL$ follows from the definition 
of the coefficients and  $(\ref{dRXYinverse})$.
\end{proof}

\subsection{Finiteness}

\begin{thm}  \label{prop: finite}
The subspace  of elements of  $\MI^E$ of total  modular weight $w$  and $M$-degree $\leq m$ is finite-dimensional for every $m,w$. 
\end{thm}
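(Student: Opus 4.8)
The plan is to reduce the finiteness statement to a finiteness statement about the affine ring $\Or(\Ue)$, which in turn follows from the fact that $\ue$ is finitely generated in each $M$-degree and each length. First I would recall that, by definition, $\MI^E$ of total modular weight $w$ is spanned over $\ZZ^{\sv}$ by the modular components $c_{r,s}(J^{\eqv})$ with $r+s = w$, arising from $\SL_2$-equivariant $\Q$-linear maps $c : (V^{dR}_{2n})^{\vee} \to \Or(\Ue)$ with $2n \geq w$ (so that $V_{2n}$ contains a subrepresentation of $\ssl_2$-weight $w$). The key observation is that such a coefficient function $c$ is determined by the image of a lowest-weight vector, hence by a single $\SL_2$-equivariant map into $\Or(\Ue)$; moreover, as shown in the preceding lemma, $c$ is homogeneous for the $M$-grading with $\deg_M c = \deg_M c_{r,s}$. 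Therefore the space of coefficient functions of $M$-degree $\leq m$ landing in weight-$w$ modular components is identified with a subspace of $\Hom_{\SL_2}\big( (V^{dR}_{2n})^{\vee}, M_{\leq m}\Or(\Ue)\big)$ for the finitely many relevant values of $n$ (in fact $n$ is constrained: a coefficient function of $M$-degree $\leq m$ lands in $M_{\leq m}\Or(\Ue)$, and the dual $(V^{dR}_{2n})^{\vee}$ has components in $M$-degrees $0, \ldots, 2n$, so equivariance forces $2n \leq m$, bounding $n$).

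Next I would establish that $M_{\leq m}\Or(\Ue)$ is finite-dimensional over $\Q$. This follows because $\ue$ is a graded Lie subalgebra of $\mathrm{Der}^{\Theta}\Lie(\av,\bv)$ generated, together with the $\ssl_2$-action, by the finitely many $\varepsilon^{\vee}_{2k+2}$ with $2k+2 \leq m$ in $M$-degree $\geq -m$; hence $\ue$ is finite-dimensional in each $M$-degree, and so is its universal enveloping algebra truncated in $M$-degree, and dually $M_{\leq m}\Or(\Ue)$ is a finite-dimensional $\Q$-vector space (each graded piece $\Or(\Ue)$ in $M$-degree $j$ is finite-dimensional since $\ue$ is a finitely generated graded Lie algebra with finite-dimensional graded pieces, and the $M$-grading is non-negative on $\Or(\Ue)$). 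Consequently $\Hom_{\SL_2}\big( (V^{dR}_{2n})^{\vee}, M_{\leq m}\Or(\Ue)\big)$ is finite-dimensional for each of the finitely many relevant $n$.

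Finally I would assemble the conclusion: the modular components $c_{r,s}(J^{\eqv})$ of $M$-degree $\leq m$ and total modular weight $w$ span, over $\ZZ^{\sv}$, a subspace of $\MI^E$ which is the image of the finite-dimensional $\Q$-vector space of the corresponding coefficient functions; but $\MI^E$ is a module over $\ZZ^{\sv}$, and the $M$-filtration on $\MI^E$ is compatible with the $M$-filtration on $\ZZ^{\sv}$ (half the weight filtration on single-valued multiple zeta values), with $M_0\ZZ^{\sv} = \Q$. Since $M_{\leq m}\ZZ^{\sv}$ is itself finite-dimensional over $\Q$ (the weight-graded pieces of $\ZZ^{\sv}$ are finite-dimensional), the product $M_{\leq m}\MI^E$ of weight $w$ is spanned over $\Q$ by products of a coefficient function of $M$-degree $\leq m_1$ with an element of $M_{\leq m_2}\ZZ^{\sv}$, $m_1 + m_2 \leq m$, and is therefore finite-dimensional.

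The main obstacle is the bookkeeping needed to show that only finitely many coefficient functions contribute: one must argue carefully that the $\SL_2$-equivariance together with the non-negativity of the $M$-grading on $\Or(\Ue)$ forces both the rank $2n$ of the source representation $V_{2n}$ and the $M$-degree of $c$ to be bounded in terms of $m$, and that for fixed source and target the $\Q$-space of equivariant maps is finite-dimensional. The genuinely substantive input, namely that $\ue$ (and hence $\Or(\Ue)$ and $\ZZ^{\sv}$) is finite-dimensional in each $M$-degree, is already available: for $\ue$ it is immediate from its description as a finitely generated graded Lie algebra, and for $\ZZ^{\sv}$ it follows from the corresponding finiteness for multiple zeta values of bounded weight.
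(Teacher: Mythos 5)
There is a genuine gap, and it sits at the heart of your argument: the claim that $\ue$ (and hence $\Or(\Ue)$) is finite-dimensional in each $M$-degree is false. Every generator $\varepsilon_{2n+2}$, $n\geq 1$, lies in $M$-degree $-1$: indeed $\deg_M \e_{2n+2}=-1$ and $\deg_M \Xv =0$, so $\e_{2n+2}\Xv^{2n}$ has $M$-degree $-1$ for \emph{every} $n$, and the monodromy map $\mu$ preserves the $M$-grading. Since the $\varepsilon_{2n+2}$ are linearly independent (their values $\varepsilon_{2n+2}(\bv)=-\mathrm{ad}(\bv)^{2n+2}\av$ already are), $\gr^M_{-1}\ue$ is infinite-dimensional, and so is $M_{\leq m}\Or(\Ue)$ for every $m\geq 1$. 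For the same reason your parenthetical bound ``equivariance forces $2n\leq m$'' does not hold: a coefficient function homogeneous of $M$-degree $d$ sends the degree-$j$ component of $\dV^{dR}_{2n}$ into degree $j+d$ of $\Or(\Ue)$, which ranges up to $2n+d$, so the image does not land in $M_{\leq m}\Or(\Ue)$ and no bound on $n$ comes out of this. (In any case $n$ needs no bounding: the total modular weight fixes it, since $r+s=2n=w$.)

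The finiteness is therefore \emph{not} a property of the $M$-filtration alone; it is the interaction of the $M$-degree bound with the \emph{fixed} $\SL_2$-isotypical component determined by $w$ that does the work, and this is the step your proposal is missing. The paper's proof runs as follows: it suffices to bound each $\SL_2$-isotypical component of $\gr^M_m\Or(\Ue)$, and since $\ue$ is a quotient of the free Lie algebra $\uu^{dR}_E$ on the $\e_{2n+2}V^{dR}_{2n}$, one may work there. The inclusion of an isotypical component $V^{dR}_{2\ell}\hookrightarrow V^{dR}_{2n_1}\otimes\cdots\otimes V^{dR}_{2n_r}$ has $M$-degree $\ell-(n_1+\cdots+n_r)$ (dual to the projector $\delta^k_{dR}$, which has $M$-degree $k=n_1+\cdots+n_r-\ell$); together with the contribution $-r$ from the symbols $\e_{2n_i+2}$, fixing $\ell$ by the modular weight and bounding the total $M$-degree by $m$ forces $r+n_1+\cdots+n_r$ to be bounded, so only finitely many tuples $(n_1,\ldots,n_r)$ of Eisenstein weights can contribute. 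The remaining ingredients of your assembly --- finite multiplicities of $V_{2\ell}$ in each tensor product, and the finite-dimensionality of $\ZZ^{\sv}$ in bounded weight --- are correct and are also used in the paper, but they cannot substitute for this isotypical-component argument.
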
 

\begin{proof}  It is enough to show that the dimension of each $\SL_2$-isotypical component of $\gr^M_m \Or(\Ue)$, or equivalently $\gr^M_{-m} \ue$, is finite-dimensional. Since the latter is a quotient of $\uu^{dR}_E$, it is enough to prove it for  the free Lie algebra on $\e_{2n+2} V^{dR}_{2n}$. Note in passing that since $\gr_M \e_{2n+2}=-1$, 
the space $\gr_{-m}^M \uu^{dR}_E$ only involves Lie words of bounded length in the $\e_{2n+2} V^{dR}_{2n}$. 
Now  the  inclusion of an isotypical component  
$$V^{dR}_{2\ell} \To  V^{dR}_{2n_1} \otimes \ldots \otimes V^{dR}_{2n_r}$$
has $M$-degree $\ell -(n_1+ \ldots +n_r )$.  This follows from the fact that the map $\delta_{dR}^k$ defined in (\ref{deltakdef}) has $M$-degree $k$, so its dual has $M$-degree $-k$, where $k=n_1+ \ldots +n_r-\ell$. The integer $\ell$ is constrained by the modular weight:  $\ell=w$. 
The statement follows from the fact that the number of strictly positive integers $n_1,\ldots, n_r$ whose sum is bounded above is finite, and that the subring of  $\ZZ^{\sv}$ of bounded weight is finite-dimensional. 
\end{proof}

\subsection{Compatibilities}
The above structures are all mutually compatible: for example, the length filtration and $M$-filtration are compatible with the algebra structure.

\section{Differential structure of $\MI^E$}

The ordinary differential equations satisfied by $J^{\eqv}$ are equivalent to  a system of differential equations involving the operators $\partial$ and $\overline{\partial}$ 
 satisfied by its modular components.  These in turn give rise to inhomogeneous Laplace eigenvalue equations.

\subsection{Vector-valued differential equations}
\begin{prop} \label{propdF=A}  Let $F,A,B: \mathcal{H} \rightarrow V_{2n}\otimes \C$ be real analytic. Then the equation 
\begin{equation} \label{dF=A} {\partial F \over \partial z} = {2 \pi i  \over 2}  A(z)
\end{equation} 
is equivalent to the following system of equations: 
\begin{eqnarray}
 \partial F_{2n,0}  & = &   \LL  A_{2n,0}   \\
 \partial  F_{r,s} - (r+1) F_{r+1, s-1}  &= & \LL  A_{r,s} \qquad \hbox{ if } \quad s\geq 1 \ ,  \nonumber 
\end{eqnarray} 
for all $r+s = 2n$, and $r,s \geq 0$.
In a similar manner, 
\begin{equation} \label{dbarF=B} {\partial F \over \partial \overline{z}} = {2 \pi i  \over 2}  B(z)
\end{equation} 
is equivalent to the following system of equations:
\begin{eqnarray}
 \overline{\partial}  F_{0,2n}  & = &   \LL  B_{0,2n}   \\
 \overline{\partial} F_{r,s} - (s+1) F_{r-1, s+1}  &= & \LL  B_{r,s} \qquad \hbox{ if } \quad r\geq 1 \ .  \nonumber 
\end{eqnarray} 
\end{prop}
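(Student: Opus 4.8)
The plan is to obtain the two scalar systems by differentiating the canonical expansion \eqref{fsection2} of $F$ term by term and then re-expressing the answer in the same basis. Write $u = X - zY$ and $v = X - \overline{z}\,Y$, so that $F = \sum_{r+s=2n} F_{r,s}\,u^{r}v^{s}$ with the $F_{r,s}$ real analytic. Since $z$ and $\overline{z}$ are treated as independent variables, $\partial u/\partial z = -Y$ and $\partial v/\partial z = 0$, so
$$\frac{\partial F}{\partial z} \;=\; \sum_{r+s=2n}\Big( \frac{\partial F_{r,s}}{\partial z}\,u^{r}v^{s} \;-\; r\,F_{r,s}\,Y\,u^{r-1}v^{s} \Big)\ .$$
The only extra ingredient is the expression of $Y$ back in the basis $\{u,v\}$: from $u-v=(\overline{z}-z)Y$ together with $\LL = \pi i\,(z-\overline{z})$ one gets $Y = \tfrac{\pi i}{\LL}(v-u)$, which is exactly the second line of the change of variables \eqref{XYinverse}. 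Substituting $Y\,u^{r-1}v^{s} = \tfrac{\pi i}{\LL}\big(u^{r-1}v^{s+1}-u^{r}v^{s}\big)$ and collecting, the coefficient of $u^{a}v^{b}$ in $\partial F/\partial z$ is $\partial F_{a,b}/\partial z + \tfrac{\pi i}{\LL}\,a\,F_{a,b} - \tfrac{\pi i}{\LL}\,(a+1)\,F_{a+1,b-1}$, the last term being present only when $b\ge 1$.

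I would then compare with the right-hand side of \eqref{dF=A}, whose $u^{a}v^{b}$-coefficient is $\pi i\,A_{a,b}$. The monomials $u^{a}v^{b}$ with $a+b=2n$ form a basis of $V_{2n}\otimes\C$ over the real-analytic functions on $\HH$ — this is precisely the uniqueness assertion in Proposition \ref{propmodularformsfromsections} — so \eqref{dF=A} is equivalent to the family of scalar identities obtained by equating coefficients. Multiplying each one through by $\LL/(\pi i) = z-\overline{z}$ and recognising that $(z-\overline{z})\tfrac{\partial}{\partial z}+a$ is exactly the operator $\partial_{a}$ on $\mathcal{M}_{a,\bullet}$, one lands on $\partial F_{a,b} - (a+1)F_{a+1,b-1} = \LL\,A_{a,b}$ for $b\ge 1$ and $\partial F_{2n,0}=\LL\,A_{2n,0}$, which is the first claimed system.

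The second system is obtained by running the identical computation with the roles of $z$ and $\overline{z}$ — equivalently $u$ and $v$ — interchanged: now $\partial v/\partial\overline{z}=-Y$, $\partial u/\partial\overline{z}=0$, one uses the same substitution $Y=\tfrac{\pi i}{\LL}(v-u)$, and at the end one multiplies by $z-\overline{z}$ and recognises the operator $\overline{\partial}_{b}=(\overline{z}-z)\tfrac{\partial}{\partial\overline{z}}+b$. (It can also be deduced from the first system by applying complex conjugation to \eqref{fsection2}, which swaps $u\leftrightarrow v$ and sends $F_{r,s}\mapsto\overline{F_{s,r}}$.) This statement has no obstacle of principle — it is a direct calculation — and the only thing that genuinely needs care is the sign and scalar bookkeeping: the interplay of the $\pi i$ coming from the factor $\tfrac{2\pi i}{2}$, the relation $\LL = \pi i(z-\overline{z})$, the constant terms in $\partial_{r}$ and $\overline{\partial}_{s}$, and the two boundary cases $b=0$ (resp. $a=0$) in which the shifted term disappears.
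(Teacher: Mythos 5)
Your method is exactly the one the paper intends: Proposition \ref{propdF=A} is dispatched there with the single line ``the proof is a straightforward computation'' and a pointer to \S 7 of the first paper in the series, and your expansion in the basis $u^rv^s$ with $u=X-zY$, $v=X-\overline{z}Y$, the substitution $Y=\tfrac{\pi i}{\LL}(v-u)$, and comparison of coefficients (legitimate by the uniqueness assertion of Proposition \ref{propmodularformsfromsections}) is that computation. The holomorphic half is complete and correct: multiplying the coefficient identity by $z-\overline{z}=\LL/(\pi i)$ turns $(z-\overline{z})\tfrac{\partial}{\partial z}+a$ into $\partial_a$ and the right-hand side $\pi i A_{a,b}$ into $\LL A_{a,b}$.

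The one place you have not actually closed the argument is the sign in the antiholomorphic half, which you explicitly defer to ``bookkeeping''. Carrying it out: the coefficient of $u^av^b$ in $\partial F/\partial\overline{z}$ is $\partial F_{a,b}/\partial\overline{z}-\tfrac{\pi i}{\LL}\,b\,F_{a,b}+\tfrac{\pi i}{\LL}(b+1)F_{a-1,b+1}$, and to produce $\overline{\partial}_b=(\overline{z}-z)\tfrac{\partial}{\partial\overline{z}}+b$ you must multiply by $\overline{z}-z=-\LL/(\pi i)$, not by $z-\overline{z}$ as you state; this sends the right-hand side $\pi i B_{a,b}$ to $-\LL B_{a,b}$, so the direct computation yields $\overline{\partial}F_{r,s}-(s+1)F_{r-1,s+1}=-\LL B_{r,s}$ rather than $+\LL B_{r,s}$. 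The same sign is visible in the worked example of \S\ref{sectRAEisenstein}: there $\tfrac{\partial}{\partial\overline{\tau}}\mathcal{E}=\tfrac{2\pi i}{2}B$ with $B_{0,w}=-\overline{\GE}_{w+2}$ (the conjugate of $2\pi i\,d\tau$ produces a minus sign), while \eqref{realanalantiholequation} asserts $\overline{\partial}\,\mathcal{E}_{0,w}=+\LL\,\overline{\GE}_{w+2}$. So either the normalisation of $B$ carries an implicit sign or the displayed second system has a sign slip; in either case this is precisely the point your sketch waves at, and you should carry the factor $\overline{z}-z$ through explicitly (or invoke complex conjugation of the first system, tracking $\overline{\LL}=\LL$ and $\overline{F_{r,s}}$ carefully) rather than asserting that the second system follows ``identically''.
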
 
The proof is a straightforward computation (\cite{ZagFest} \S7).

\begin{lem} \label{lemapplydeltak}  Suppose that $A: \HH \rightarrow V_{2n} \otimes\C$ and set
$$F= {\delta_{dR}^k \over (k!)^2} \Big(  (X-zY)^{2m} \otimes A \Big)\ . $$
Then  $F: \HH \rightarrow V_{2m+2n-2k}\otimes \C$ vanishes if $k>2n $ or $k>2m$, and its components  satisfy
\begin{equation}
F_{ r,s }  =    (2\LL)^k \,     \binom{2m}{k}\binom{s+k}{k} A_{r-2m+k, s+k}   
  \end{equation}
where we set $A_{p,q}=0$ for $p<0$ or $q<0$. Therefore  $F_{r,s}$ vanishes   if $r<2m-k$, or equivalently, $s+k>2n$. 
\end{lem}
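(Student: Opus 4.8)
The plan is to compute $\delta_{dR}^k\bigl((\Xv-\log(q)\Yv)^{2m}\otimes A\bigr)$ directly, using the de Rham change of variables (\ref{dRXYinverse}) to pass between the basis $\{\Xv^i\Yv^j\}$ of $V^{dR}_{2m+2n}$ (in which $\delta_{dR}^k$ is a differential operator) and the basis $\{(\Xv-\log(q)\Yv)^r(\Xv-\log(\overline q)\Yv)^s\}$ (in which the modular components live). First I would expand $A=\sum_{p+q=2n} A_{p,q}\,(\Xv-\log(q)\Yv)^p(\Xv-\log(\overline q)\Yv)^q$, so that the tensor $(\Xv-\log(q)\Yv)^{2m}\otimes A$ becomes a sum of terms $(\Xv-\log(q)\Yv)^{2m}\otimes (\Xv-\log(q)\Yv)^p(\Xv-\log(\overline q)\Yv)^q$. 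The key observation is that $\delta_{dR}^k$, being $\mathrm{SL}_2^{dR}$-equivariant, can be computed after any invertible linear substitution in $\Xv,\Yv$; in particular the operator $\frac{\partial}{\partial\Xv}\otimes\frac{\partial}{\partial\Yv}-\frac{\partial}{\partial\Yv}\otimes\frac{\partial}{\partial\Xv}$ annihilates $u\otimes u$ for any linear form $u$, and acts simply on $u\otimes v$ when $u,v$ are two fixed linear forms. Concretely, writing $u=\Xv-\log(q)\Yv$ and $v=\Xv-\log(\overline q)\Yv$, one has $u,v$ spanning $\Q\Xv\oplus\Q\Yv$ with $\frac{\partial}{\partial\Xv}\otimes\frac{\partial}{\partial\Yv}-\frac{\partial}{\partial\Yv}\otimes\frac{\partial}{\partial\Xv}$ sending $u\otimes v\mapsto (\text{Wronskian})\cdot 1\otimes 1$, where the relevant Jacobian factor is $\log(\overline q)-\log(q)=-2\LL$ up to sign; this is exactly where the power $(2\LL)^k$ will come from.

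The main computation is then the following: apply $\bigl(\frac{\partial}{\partial\Xv}\otimes\frac{\partial}{\partial\Yv}-\frac{\partial}{\partial\Yv}\otimes\frac{\partial}{\partial\Xv}\bigr)^k$ to $u^{2m}\otimes u^pv^q$. Only the cross-terms survive: each application must differentiate one $u$ in the second-slot factor $u^pv^q$ and one $u$ in the first slot, or a $v$ in the second slot and a $u$ in the first — but differentiating $u$ in the first slot and $u$ in the second produces the Wronskian of $u$ with itself, which is zero, so in fact each of the $k$ derivatives pairs a $\partial_u$ acting on the first factor $u^{2m}$ with a $\partial_v$ acting on $v^q$ inside the second factor (up to the sign conventions implicit in $\delta^k$). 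This is a standard binomial bookkeeping: it produces $\binom{2m}{k}\binom{q}{k}(k!)^2$ times $(\pm 2\LL)^k$ times $u^{2m-k}\otimes u^{p}v^{q-k}$, after multiplication $\mu$ giving $(2\LL)^k\binom{2m}{k}\binom{q}{k}u^{2m+p-k}v^{q-k}$. Dividing by $(k!)^2$ and summing over $p+q=2n$, and reindexing $r=2m+p-k$, $s=q-k$ (so $q=s+k$, and $\binom{q}{k}=\binom{s+k}{k}$, and $p=r-2m+k$), yields precisely
\[
F_{r,s}=(2\LL)^k\binom{2m}{k}\binom{s+k}{k}A_{r-2m+k,\,s+k}.
\]
The vanishing statements are then immediate: the binomial $\binom{2m}{k}$ forces $F=0$ when $k>2m$; the requirement that $A_{p,q}$ only exists for $0\le p,q\le 2n$ forces $A_{r-2m+k,s+k}=0$ whenever $s+k>2n$, equivalently $r<2m-k$ (using $r+s=2m+2n-2k$); and $k>2n$ makes every index $s+k$ exceed $2n$, so $F\equiv 0$.

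The one point requiring genuine care — and the main obstacle — is tracking the signs and the exact combinatorial constants through the definition $\delta^k=\mu\circ(\partial_X\otimes\partial_Y-\partial_Y\otimes\partial_X)^k$, Remark \ref{remdeltadR} identifying $\delta_{dR}^k$ with $(2\pi i)^k\delta^k$ under comparison, and the $2\pi i$-twists built into the change of variables (\ref{dRXYinverse}) versus (\ref{XYinverse}). In particular one must check that the factor $\log(\overline q)-\log(q)$ arising from the Wronskian combines with the $2\pi i$'s and the $\frac{1}{2\LL}$-factors in (\ref{dRXYinverse}) to leave exactly $(2\LL)^k$ with a $+$ sign, and that no stray power of $2\pi i$ survives — consistent with the fact, noted after (\ref{EunderlineasGE}), that the $q$-expansions here have rational (hence $2\pi i$-free) coefficients. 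I expect this to be a short but delicate bookkeeping exercise rather than a conceptual difficulty, and the cleanest route is to perform the whole computation in the de Rham basis $u,v$ from the outset, only invoking the comparison isomorphism at the very end to confirm the normalisation.
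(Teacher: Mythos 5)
Your proposal is correct in substance, but it takes a different route from the paper: the paper's entire proof is a two-line citation, namely that the statement with $\delta^k$ in place of $\delta_{dR}^k$ is proved in \cite{ZagFest} \S 7, after which one rescales using $\delta_{dR}^k = (2\pi i)^k \delta^k$ from remark \ref{remdeltadR}. Your direct computation is a perfectly good self-contained substitute, and it is essentially the argument one would expect to find in the cited reference: in the basis $u,v$ adapted to the modular components, the bidifferential operator becomes $W\,(\partial_u\otimes\partial_v-\partial_v\otimes\partial_u)$ with $W$ the Wronskian, only $(\partial_u\otimes\partial_v)^k$ survives against $u^{2m}\otimes u^pv^q$, and the falling factorials divided by $(k!)^2$ give $\binom{2m}{k}\binom{q}{k}$; the reindexing $r=2m+p-k$, $s=q-k$ and the support of $A_{p,q}$ then yield both the formula and all three vanishing statements. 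What your version buys is independence from the companion paper; what the paper's version buys is brevity and the reuse of an already-verified normalisation.

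One concrete correction to your bookkeeping, at exactly the spot you flagged as delicate: the de Rham counterpart of $X-\overline{\tau}Y$ is $\Xv+\log(\overline{q})\Yv$ with a \emph{plus} sign (this is visible in \eqref{dRXYinverse}, and follows from $\log\overline{q}=-2\pi i\overline{\tau}$ together with $\Yv\mapsto(2\pi i)^{-1}Y$). With $u=\Xv-\log(q)\Yv$ and $v=\Xv+\log(\overline{q})\Yv$ the Wronskian is $\log q+\log\overline{q}=2\LL$ exactly, with no sign ambiguity and no stray $2\pi i$. The quantity you wrote, $\log(\overline{q})-\log(q)$, equals $-2\pi i(\tau+\overline{\tau})$ and is not $\pm 2\LL$; had you carried that basis through, the $(2\LL)^k$ in the final formula would not have come out. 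This is a correctable slip rather than a gap, since the rest of your argument is insensitive to which linear form plays the role of $v$, but it should be fixed before the computation is written out in full.
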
 
\begin{proof}
The statement with  $\delta_{dR}$  replaced by $\delta$ is given in  \cite{ZagFest} \S7. Use $\delta_{dR} = 2\pi i \delta$. 
\end{proof}

\subsection{Differential structure of $\MI^E$}
Recall that the graded $\Q$-vector space generated by Eisenstein series is denoted by
$$E = \bigoplus_{n\geq 1} \GE_{2n+2} \Q \ .$$
It is  placed in $M$-degree zero.

\begin{thm}  \label{thm: moddiff} Every modular component $c_{r,s}$ of a coefficient function  $c$ satisfies
\begin{equation} \label{holcrs} \partial c_{r,s} - (r+1) c_{r+1, s-1} \quad \in \quad E[\LL] \times \MI^E_{k-1} 
\end{equation} 
where we define $c_{r,s}$ to be zero if  $r$ or $s$ is negative. Similarly,
\begin{equation} \label{antiholcrs} \overline{\partial} c_{r,s} - (s+1) c_{r-1, s+1}   \quad \in \quad \overline{E}[\LL] \times \MI^E_{k-1}\ .
\end{equation}
\end{thm}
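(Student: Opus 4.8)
The plan is to differentiate a coefficient of $J^{\eqv}$ using the holomorphic part of the differential equation $(\ref{JeqvODE})$ and then to translate the resulting first–order system for a vector–valued function into a system for its modular components by means of Proposition \ref{propdF=A}. Concretely, fix an $\SL_2$–equivariant coefficient function $c:\dV^{dR}_{2n}\to\Or(\Ue)$ landing in $L_k\Or(\Ue)$, and set $F=c(J^{\eqv}):\HH\to V_{2n}\otimes\C$. By Corollary \ref{corPropertiesofJeq} the holomorphic part of the connection is $\omega$ of $(\ref{omegadef})$, so, writing $\tfrac{dq}{q}=2\pi i\,d\tau$,
$$\frac{\partial}{\partial\tau}c(J^{\eqv})\;=\;2\pi i\,c\bigl(\mathrm{ad}(\varepsilon_0)J^{\eqv}\bigr)\;-\;2\pi i\sum_{n\geq1}\frac{2}{(2n)!}\,\GE_{2n+2}(q)\,c\bigl(\varepsilon_{2n+2}J^{\eqv}\bigr),$$
where $\varepsilon_{2n+2}$ acts on $J^{\eqv}$ by left multiplication inside the enveloping algebra of $\widehat{\ue}$. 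The goal is to read off the function $A$ of Proposition \ref{propdF=A} from the right–hand side.

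The first term is handled by equivariance: since $c$ intertwines the $\SL_2$–actions, $c(\mathrm{ad}(\varepsilon_0)J^{\eqv})$ is the image of $c(J^{\eqv})$ under the lowering operator of $\ssl_2$ corresponding to $\varepsilon_0$ under $(\ref{XYderivs})$; combined with the effect of $\tfrac{\partial}{\partial\tau}$ on the basis polynomials $(X-\tau Y)^r(X-\overline\tau Y)^s$, this contribution is precisely what is reorganised into the left–hand side $\partial c_{r,s}-(r+1)c_{r+1,s-1}$ of Proposition \ref{propdF=A}, so that $A$ receives no contribution from it. This is the same mechanism already visible in the real analytic Eisenstein series, where the clean equations $(\ref{realanalholequation})$ are equivalent to $d\mathcal E=\mathrm{Re}(\underline E_{2w+2})$ even though the underlying connection carries an $\mathrm{ad}(\varepsilon_0)$ term. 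It then remains to analyse $\GE_{2n+2}(q)\,c(\varepsilon_{2n+2}J^{\eqv})$. Here $\GE_{2n+2}\in E$, and dually the operation of prepending the $\SL_2$–irreducible generated by $\varepsilon_{2n+2}$ (Lemma \ref{lemsl2onepsilons}) to $J^{\eqv}$ and feeding the result through $c$ truncates one letter, producing — once the resulting $V^{dR}_{2n}$–index is contracted against the $(\Xv-\log q\,\Yv)^{2n}$ in $\underline E_{2n+2}(\tau)$, see $(\ref{EunderlineasGE})$ — a coefficient function of length $\leq k-1$, whose modular components lie in $\MI^E_{k-1}$. Passing to modular components requires the change of basis $(\ref{dRXYinverse})$ and the isotypical decomposition of the product, i.e. the projectors $\delta^k_{dR}$ of $(\ref{deltakdef})$; by Lemma \ref{lemapplydeltak} each such projection introduces a factor $(2\LL)^k$, so the Eisenstein contribution to $\LL A_{r,s}$ is a $\Q[\LL]$–linear combination of products of Eisenstein series with elements of $\MI^E_{k-1}$. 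Any bare powers of $\log q,\log\overline q$ are forbidden by $T$–invariance of genuine modular forms, exactly as in the proof of Theorem \ref{thm: SVMZVcoeff}, so the outcome lies in $E[\LL]\times\MI^E_{k-1}$, which is $(\ref{holcrs})$.

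The anti–holomorphic statement $(\ref{antiholcrs})$ follows either by the parallel computation with the anti–holomorphic part $J^{\eqv}\phi^{\sv}(\sv\,\omega)$ of $(\ref{JeqvODE})$ — noting that $\phi^{\sv}$ modifies $\sv\,\omega$ only by terms of length $\geq2$ with coefficients in $\ZZ^{\sv}$, which contribute to $\MI^E_{k-2}\subset\MI^E_{k-1}$, while the leading term reproduces the conjugate Eisenstein contribution — or, more quickly, by applying complex conjugation and using that $\overline\partial$ is the conjugate of $\partial$ and that $\MI^E$ is stable under complex conjugation (Lemma \ref{lemComplexConj}).

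The main obstacle is the bookkeeping underlying the claim that the $\mathrm{ad}(\varepsilon_0)$ term of the connection is \emph{exactly} absorbed into the $-(r+1)c_{r+1,s-1}$ shift of Proposition \ref{propdF=A}, with nothing left over: this requires matching, term by term, the transport of the inner $\ssl_2$–action on $\ue$ through the equivariant map $c$ against the action of $\partial_r=(z-\overline z)\partial_z+r$ on sections written in the frame $\{(X-\tau Y)^r(X-\overline\tau Y)^s\}$, and then correctly identifying which $\delta^k_{dR}$–projections occur in the Eisenstein term in order to pin down the precise powers of $\LL$ in $E[\LL]$. Everything else is the routine translation between the formal differential equation for $J^{\eqv}$ and the concrete system for its modular components.
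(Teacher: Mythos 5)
Your overall strategy matches the paper's: differentiate $c(J^{\eqv})$ in the holomorphic direction, use the coproduct and the projectors $\delta^k_{dR}$ to decompose the result into an Eisenstein factor times a coefficient function of strictly smaller length, then feed the output into Proposition \ref{propdF=A} and Lemma \ref{lemapplydeltak}; deducing the anti-holomorphic half from stability of $\MI^E$ under complex conjugation is also exactly what the paper does. The one substantive divergence is your starting point. You differentiate using the $q$-coordinate connection $\omega$ of $(\ref{omegadef})$, which contains the term $-\mathrm{ad}(\varepsilon_0)\,\frac{dq}{q}$, and you must then argue that this term is \emph{exactly} absorbed into the shift $-(r+1)c_{r+1,s-1}$ produced by differentiating the frame $(X-\tau Y)^r(X-\overline{\tau}Y)^s$. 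You correctly flag this as the main obstacle, but you only assert it; the term-by-term matching is never carried out. The paper avoids the issue entirely: it writes $J^{\eqv}=\mu\big(I^E\,\overline{K}\big)$ and differentiates $I^E$ against the $\tau$-coordinate form $\widetilde{\Omega}^E = 2\pi i\sum_{m}\e_{2m+2}\,\GE_{2m+2}(\tau)(X-\tau Y)^{2m}$, which has no $\mathrm{ad}(\varepsilon_0)$ term. The two connections differ by the gauge transformation $\exp(\log(q)\,\mathrm{ad}(\varepsilon_0))$, which is precisely the passage between the constant frame $\Xv^{2m}$ and the moving frame $(X-\tau Y)^{2m}$; working on the $\widetilde{\Omega}^E$ side means Proposition \ref{propdF=A} applies with $A$ equal to the Eisenstein contribution alone, and Lemma \ref{lemapplydeltak} applies verbatim to $\delta^k_{dR}\big(\GE_{2m+2}(\tau)(X-\tau Y)^{2m}\otimes c''(J^{\eqv})\big)$, yielding the powers of $\LL$ directly.

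A second, related point: in the $q$-picture the Eisenstein part of $\omega$ carries only the lowest weight vectors $\varepsilon_{2m+2}$, not the full modules $(X-\tau Y)^{2m}$, so even after disposing of the $\mathrm{ad}(\varepsilon_0)$ term you would still need to reassemble the $\SL_2$-module structure before Lemma \ref{lemapplydeltak} (stated for $(X-zY)^{2m}\otimes A$) can be invoked; your phrase about contracting against $(\Xv-\log q\,\Yv)^{2n}$ gestures at this but again leaves the computation undone. Both missing verifications are true --- they amount to the gauge equivalence of $\omega$ and $\Omega^E$ already established in the paper's proposition on $J$ --- so your route can be completed, but as written the central bookkeeping is a gap. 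The cleanest repair is to adopt the paper's choice of connection form, after which your argument and the paper's coincide.
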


\begin{proof}  Let $c:\dV^{dR}_{2n} \rightarrow \Or(\Ue)$ be a coefficient. Embed  $\Or(\Ue) \subset \Or(\U^{dR}_E)$ via $\mu^*$, where $\mu$ is the monodromy homomorphism.  We shall also denote the induced  $\SL_2$-equivariant map $ \mu^* c:  \dV^{dR}_{2n} \rightarrow \Or(\U^{dR}_E)$  by $c$ and view a coefficient function 
 as taking values in $\Or(\Ue)$ or $\Or(\U^{dR}_E)$ as appropriate. If we
 choose any splitting $\Or(\U^{dR}_E) \rightarrow \Or(\Ue)$ as $\SL_2$-modules, we can write  $J^{\eqv}$, via definition  \ref{defnJeqv} in the form 
$$J^{\eqv}(\tau) = \mu  \big(  I^E   \overline{ K}  \big) $$
for some antiholomorphic function  $\overline{K} : \HH \rightarrow \Ue(\C)$ which is viewed as function taking values in the scheme  $\U^{dR}_E(\C)$  via our choice of splitting. Therefore 
$${ \partial \over \partial \tau} c(J^{\eqv}) = -   c\left( \mu(\widetilde{\Omega}^E I^E  \overline{K}) \right)  \ , $$
where 
$$ \widetilde{\Omega}^E =  2 \pi i \sum_{n\geq 1}  \e_{2n+2}\,  \G_{2n+2} (\tau) (X- \tau Y)^{2n}\ ,$$
which follows from the differential equation $d I^E = - \Omega^E I^E$ and (\ref{OmegaEdef}). The coproduct 
$$\Delta: \Or(\U^{dR}_E) \To \Or(\U^{dR}_E) \otimes \Or(\U^{dR}_E)$$ 
dual to the multiplication law in $\U^{dR}_E$ is $\SL_2$-equivariant.  In particular, it  induces a coproduct on coefficient functions 
$c: \dV^{dR}_{2n} \rightarrow \Or(\U^{dR}_E)$ in the following the form (using a variant of Sweedler's notation)
$$\Delta \,c = \sum_{k\geq 0}  (c' \otimes c'') (\delta_{dR}^k)^{\vee}\ ,$$
where 
$(\delta_{dR}^k)^{\vee}:  \check{V}^{dR}_{2n}  \subset \check{V}^{dR} \rightarrow \check{V}^{dR} \otimes  \check{V}^{dR}$
is the dual of $\delta_{dR}^k: V^{dR} \otimes V^{dR} \rightarrow V^{dR}$, and $V^{dR}= \bigoplus_{n\geq0} V^{dR}_n = \Q[\Xv,\Yv]$. 
Let $\langle, \rangle$ denote the pairing between the points of $\Ue$ and $\Or(\Ue)$.  We can rewrite our differential equation in the form
$${ \partial \over \partial \tau} c(J^{\eqv}) = -   \langle c,   m (\mu \otimes \mu) (\widetilde{\Omega}^E \otimes I^E \overline{K})\rangle  $$
where $m$ denotes multiplication, and $\widetilde{\Omega}^E$ is viewed as a section of $L_1\Or(\U^{dR}_E)$. 
By the duality between coproducts and multiplication, this equals
\begin{eqnarray}
{ \partial \over \partial \tau} c(J^{\eqv})  &=  & -  \langle \Delta c, (\mu \otimes \mu)(\widetilde{\Omega}^E \otimes I^E \overline{K} )\rangle \nonumber   \\
 &=  & -  \sum_{k\geq 0} \langle   (c' \otimes c'') (\delta_{dR}^k)^{\vee}     ,  \widetilde{\Omega}^E \otimes  J^{\eqv} \rangle \nonumber \\
 &=  & -  \sum_{k\geq 0}   \delta_{dR}^k  \left( (c' \otimes c'') (\widetilde{\Omega}^E \otimes J^{\eqv}) \right) \nonumber 
\end{eqnarray}  
where in the second line, we view  $c''$ as a coefficient function on $\Or(\Ue)$ and $c'$ as a coefficient function on $\Or(\U^{dR}_E)$. 
Restricted to $L_1\Or(\U^{dR}_E)$,  the map $c'$ is a linear combination of the maps which send one $e_{2r+2}$ to $1$ and all other $e_{2r'+2}$ to $0$. 
We conclude that
$${ \partial \over \partial \tau} c(J^{\eqv}) = 2\pi i  A$$
where $A: \HH \rightarrow V_{2n}\otimes \C$ is a  $\Q$-linear combination of  terms of the form
$$ \delta^k_{dR}  \Big( \GE_{2m+2}(\tau)(X-\tau Y)^{2m} \otimes c'' (J^{\eqv}) \Big)$$
where $c''$ is a coefficient function of strictly smaller length than $c$, since the coproduct $\Delta$ is compatible with the length filtration.
The first part of theorem follows on 
applying  proposition $\ref{propdF=A}$ and lemma \ref{lemapplydeltak}. The second part follows from the first using the fact that  $\MI^E$ is  stable
under complex conjugation.
 \end{proof}

Since $\MI^E$ is generated by coefficient functions we deduce the
\begin{cor}
The space $\MI^E$ has the following differential structure:
\begin{eqnarray} 
\partial  \big( \MI_k^E\big)    &\subset &    \MI^E_{k}  \quad +  \quad E[\LL]  \times \MI_{k-1}^E    \nonumber \\
\overline{\partial}  \big( \MI_k^E \big)  &\subset &    \MI^E_{k} \quad + \quad   \overline{E}[\LL]  \times \MI_{k-1}^E    \ . \nonumber 
\end{eqnarray} 
The operators $\partial, \overline{\partial}$ respect the $M$-filtration, i.e., $\deg_M \partial= \deg_M \overline{\partial}=0$. 
\end{cor}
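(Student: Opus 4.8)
The plan is to obtain the Corollary as an essentially formal consequence of Theorem~\ref{thm: moddiff}, together with the definitions of the length filtration $\MI^E_\bullet$ and of the $M$-filtration on $\MI^E$. Since every element of $\ZZ^{\sv}$ is a constant, the operators $\partial$ and $\overline{\partial}$ are $\ZZ^{\sv}$-linear, so it suffices to verify the asserted inclusions on a generating set of the $\ZZ^{\sv}$-module $\MI^E_k$. By definition such generators are the modular components $c_{r,s}(J^{\eqv})$ of coefficient functions $c\colon \dV^{dR}_{2n}\to\Or(\Ue)$ whose image lies in $L_k\Or(\Ue)$.

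For such a generator one invokes Theorem~\ref{thm: moddiff}, which gives
$$\partial c_{r,s}=(r+1)\,c_{r+1,s-1}+\rho,\qquad \rho\in E[\LL]\times\MI^E_{k-1},$$
with the convention that $c_{a,b}=0$ when $a<0$ or $b<0$. The point is that $c_{r+1,s-1}$ is again one of the modular components of the \emph{same} coefficient function $c$ (the index pair still sums to $2n$, and when $s=0$ it is zero by convention), hence is itself a generator of $\MI^E_k$; therefore the first term lies in $\MI^E_k$ and the second, by definition, in $E[\LL]\times\MI^E_{k-1}$. This yields $\partial(\MI^E_k)\subseteq\MI^E_k+E[\LL]\times\MI^E_{k-1}$. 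The inclusion for $\overline{\partial}$ follows in exactly the same way from \eqref{antiholcrs}, using that $\MI^E$ is stable under complex conjugation, so that $\overline{E}[\LL]\times\MI^E_{k-1}$ is again a space of products of $\MI^E$-elements with $\overline{E}[\LL]$.

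It then remains to check that $\partial$ and $\overline{\partial}$ are of $M$-degree $0$. The way to do this is to track $M$-degrees through the proof of Theorem~\ref{thm: moddiff}. The term $(r+1)c_{r+1,s-1}$ has the same $M$-degree as $c_{r,s}$, namely $\deg_M c$, by the Lemma identifying the $M$-degree of a modular component with that of its coefficient function. The correction term $\rho$ is, by that proof, a $\Q$-linear combination of expressions obtained by feeding $\GE_{2m+2}(\tau)(X-\tau Y)^{2m}\otimes c''(J^{\eqv})$ — with $c''$ a coefficient function of length $\le k-1$ occurring in the coproduct of $c$ — through $\delta^k_{dR}$ and then through Proposition~\ref{propdF=A} and Lemma~\ref{lemapplydeltak}, which together multiply by $\LL^{k+1}$ (one power of $\LL$ from Proposition~\ref{propdF=A} on top of the $\LL^k$ of Lemma~\ref{lemapplydeltak}). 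Using $\deg_M\LL=1$, $\deg_M\GE_{2m+2}=0$, $\deg_M(X-\tau Y)=0$ in the de Rham basis (because of the power of $2\pi i$ absorbed by the comparison \eqref{VBdRcomp}), $\deg_M\delta^k_{dR}=k$ (Remark~\ref{remdeltadR}), and the fact that the deconcatenation coproduct on $\Or(\U^{dR}_E)$ is $M$-graded — so that on the relevant component $\deg_M c'+\deg_M c''-k=\deg_M c$ — one finds $\deg_M\rho=\deg_M c=\deg_M c_{r,s}$. Equivalently, one may argue directly that $\partial_r=(z-\overline{z})\tfrac{\partial}{\partial z}+r$ is $M$-homogeneous of degree $0$ on $\MI^E$: the factor $z-\overline{z}=\LL/\pi i$ has $M$-degree $0$, and $\tfrac{\partial}{\partial z}$ together with the $2\pi i$ arising from $dz=(2\pi i)^{-1}dq/q$ acts with $M$-degree $0$ on $q$-expansions, and this is preserved under passage to modular components since the motivic version of the change of variables \eqref{dRXYinverse} is $M$-homogeneous of degree $0$.

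The only step that genuinely requires care — and where a degree-or-normalisation slip is easiest — is this last $M$-degree bookkeeping: one must keep exact track of the powers of $2\pi i$ hidden in the comparison isomorphism \eqref{VBdRcomp} and in $\delta^k_{dR}$, and of the single extra power of $\LL$ produced by Proposition~\ref{propdF=A} beyond the $\LL^k$ of Lemma~\ref{lemapplydeltak}. Everything else is purely formal given Theorem~\ref{thm: moddiff} and the definitions of $\MI^E_\bullet$ and of the $M$-filtration.
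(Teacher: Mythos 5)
Your route is the one the paper itself takes: the two inclusions are read off from Theorem~\ref{thm: moddiff} applied to the generators $c_{r,s}(J^{\eqv})$ (noting, as you do, that $c_{r+1,s-1}$ is a modular component of the \emph{same} coefficient function and hence again lies in $\MI^E_k$), and the assertion $\deg_M\partial=\deg_M\overline{\partial}=0$ is obtained by tracking $M$-degrees through the proof of that theorem, using that the coproduct is $M$-graded, that $\GE_{2m+2}$ sits in $M$-degree $0$, and that the powers of $\LL$ produced by Lemma~\ref{lemapplydeltak} compensate the degree of $\delta^k_{dR}$. The first part of your argument is correct and complete.

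There is, however, a concrete slip in the $M$-degree bookkeeping, exactly at the point you flag as delicate. You assert that the $M$-graded coproduct gives $\deg_M c'+\deg_M c''-k=\deg_M c$. With the paper's conventions this sign is wrong: the grading on $\dV^{dR}_{2n}$ is the \emph{negative} of that on $V^{dR}_{2n}$ (degrees $0,1,\dots,2n$), so the transpose $(\delta^k_{dR})^{\vee}\colon\dV^{dR}_{2n}\to\dV^{dR}\otimes\dV^{dR}$ occurring in $\Delta c=\sum_k(c'\otimes c'')(\delta^k_{dR})^{\vee}$ has $M$-degree $+k$, the same as $\delta^k_{dR}$ itself. (It is the adjoint inclusion $V^{dR}_{2\ell}\hookrightarrow V^{dR}_{2n_1}\otimes\cdots$, used in the finiteness proof, that has degree $-k$; the two maps live on opposite sides of the duality and their degrees differ by a sign.) The correct relation is therefore $\deg_M c'+\deg_M c''+k=\deg_M c$. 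Since $c'$ restricted to $L_1\Or(\U^{dR}_E)$ carries the factor $\EE_{2m+2}$ of $M$-degree $+1$, this gives $\deg_M c''=\deg_M c-k-1$, and the count closes up: the correction term $\rho$ acquires one power of $\LL$ from Proposition~\ref{propdF=A}, $k$ powers from Lemma~\ref{lemapplydeltak}, nothing from $\GE_{2m+2}$, so
$$ \deg_M\rho \;=\; 1\;+\;k\;+\;0\;+\;\deg_M c'' \;=\; \deg_M c\ . $$
With your relation the same count yields $\deg_M\rho=\deg_M c+2k$, and the argument fails for every $k\geq 1$. A sanity check is the product of two real-analytic Eisenstein series: the component $c''_k=m\circ(c\otimes c')\circ(\delta^k_{dR})^{\vee}$ has modular components proportional to $\LL^k\,\mathcal{E}_{a,b}\,\mathcal{E}_{a',b'}$, of $M$-degree $k+2=\deg_M c+\deg_M c'+k$, consistent with the $+k$ and not the $-k$. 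Finally, your alternative ``direct'' argument on $q$-expansions is not a proof of the statement as formulated, since the $M$-filtration on $\MI^E$ is defined through the coefficient functions rather than through the expansion coefficients, and $(z-\overline z)\partial/\partial z=2\LL\, q\,\partial/\partial q$ visibly raises the power of $\LL$ on terms $q^m$ with $m\geq 1$; so the coproduct computation above is the one that has to be made to work.
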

\begin{proof} The first part is immediate from the previous theorem. The statement about the $M$-filtration follows since the coproduct $\Delta$
respects the $M$-grading on $\Or(\U^{dR}_E)$, the Eisenstein series $\GE_{2m+2}$ lie in $M$-degree zero, and the fact that in lemma \ref{lemapplydeltak}, the $M$-degree of the powers of $\LL$  match the $M$-degree of $\delta_{dR}^k$.
\end{proof}

By proposition \ref{propModularKernel}, an element  $\xi \in \mathcal{M}_{r,s}$ is uniquely determined by $\partial \xi$ and $\overline{\partial} \xi$, up to a possible multiple of $\LL^{-r}$ in the case $r=s$.  When $\xi \in \MI^E$, this constant is an element of $\ZZ^{\sv}$, whose $M$-weight can be determined from the $M$-grading. 

\begin{rem}  \label{remsplitRHS} By the independence of iterated integrals (corollary \ref{corLIoverModforms}), the sums on  the right-hand side in the previous corollary are direct, and so we may write
$$\partial \big( \MI_k^E\big)   \quad   \subset \quad     \MI^E_{k}  \quad    \oplus  \quad \big(  E[\LL]  \times \MI_{k-1}^E \big)   $$
and similarly for $\overline{\partial}$. This is because $E$ does not contain any  constant functions.
\end{rem}

\subsection{Reconstruction of vector-valued modular forms}\label{remsl2onMIE}

Given an element $f_{m,n} \in \MI^E_k$ of modular weights $m,n \geq 0$, we can use the splitting of remark \ref{remsplitRHS}  to define functions $f_{r,s} \in \MI^E_k$  for all $m+n= r+s$, and $r,s \geq 0$ via
$$\partial f_{r,s} = (r+1) f_{r+1, s-1}  \pmod{ E[\LL] \times \MI^E_{k-1}}$$
whenever  $s>1$ (since $r+1\neq 0$)  and via 
$$\overline{\partial} f_{r,s} = (s+1) f_{r-1, s+1}  \pmod{\overline{E}[\LL] \times \MI^E_{k-1}}$$
whenever $r>1$ (since $s+1 \neq 0$).  That these are equations are consistent  follows from $[\partial , \overline{\partial}] = h$. The function 
$$F(\tau) = \sum_{r,s} f_{r,s}(X-\tau Y)^r (X- \overline{\tau} Y)^s$$
is then a vector-valued  modular form, and can be   reconstructed from any one of its individual modular components. 

\subsection{Laplace operator for vector-valued functions}
The following lemma explains the  existence of Laplace eigenvalue equations in a general setting. 
\begin{lem} Let $F: \HH \rightarrow V_{2n} \otimes \C$ be real analytic satisfying the equation 
$$ dF =  {(2 \pi i )\over 2}    \big( A dz + B d \overline{z} \big) $$ 
for some $A, B: \HH \rightarrow V_{2n} \otimes \C$.  Then 
\begin{eqnarray} \big( \Delta +  r+s \big) \, F_{r,s}   & = &   \LL\, \big(  \overline{\partial} A_{r,s} +   (r+1)   B_{r+1,s-1} \big)  \label{LaplaceFrs} \\ 
& = &   \LL\,  \big(      \partial B_{r,s} + (s+1)  A_{r-1,s+1} \big)    \nonumber 
\end{eqnarray} 
where $A_{r,s}, B_{r,s}$ are understood to be zero if any subscript $r$ or $s$ is negative.
\end{lem}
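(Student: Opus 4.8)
The plan is to reduce this vector-valued statement to the scalar differential identities of Proposition~\ref{propdF=A}, and then simply to unwind the definition of $\Delta_{r,s}$.

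First I would split the hypothesis $dF = \frac{2\pi i}{2}\big(A\,dz + B\,d\overline{z}\big)$ into its holomorphic and anti-holomorphic parts, $\frac{\partial F}{\partial z} = \frac{2\pi i}{2}A$ and $\frac{\partial F}{\partial \overline{z}} = \frac{2\pi i}{2}B$, and apply Proposition~\ref{propdF=A} to each. Writing everything in the basis $(X-\tau Y)^r(X-\overline{\tau}Y)^s$, this yields, for all $r+s=2n$ with $r,s\geq 0$ and with the convention that any component carrying a negative index vanishes, the two families
\[
\partial_r F_{r,s} = (r+1)\,F_{r+1,s-1} + \LL\,A_{r,s}, \qquad
\overline{\partial}_s F_{r,s} = (s+1)\,F_{r-1,s+1} + \LL\,B_{r,s}.
\]

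Next I would expand $(\Delta_{r,s}+(r+s))F_{r,s}$ using the first expression $\Delta_{r,s} = -\,\overline{\partial}_{s-1}\partial_r + r(s-1)$. Since $r(s-1)+(r+s) = s(r+1)$, this equals $-\,\overline{\partial}_{s-1}\partial_r F_{r,s} + s(r+1)F_{r,s}$. I substitute the first scalar identity for $\partial_r F_{r,s}$ and apply $\overline{\partial}_{s-1}$, using (i) the elementary relation $\overline{\partial}_{s-1}(\LL\,g) = \LL\,\overline{\partial}(g)$, which follows from the Leibniz rule for $(\overline{z}-z)\tfrac{\partial}{\partial\overline{z}}$ together with $(\overline{z}-z)\tfrac{\partial}{\partial\overline{z}}\LL = \LL$, and (ii) the second scalar identity applied to $F_{r+1,s-1}$, i.e.\ $\overline{\partial}_{s-1}F_{r+1,s-1} = s\,F_{r,s} + \LL\,B_{r+1,s-1}$. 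The $s(r+1)F_{r,s}$ contributions cancel and what survives is $\LL$ times a combination of $\overline{\partial}A_{r,s}$ and $B_{r+1,s-1}$, which is the first equality in \eqref{LaplaceFrs}. The second equality is obtained symmetrically, starting instead from $\Delta_{r,s} = -\,\partial_{r-1}\overline{\partial}_s + s(r-1)$ and substituting the $\overline{\partial}$-identity first and then the $\partial$-identity; alternatively it follows from the first using the commutator relation $[\partial,\overline{\partial}] = h$.

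I do not anticipate a genuine obstacle: once set up, the computation is a two-step substitution with everything collapsing. The one point needing care is the bookkeeping of modular weights. The functions $A$ and $B$ arise by differentiating an equivariant $F$, so they are not themselves equivariant but carry extra automorphy factors $(c\tau+d)^2$ and $(c\overline{\tau}+d)^2$; hence $A_{r,s}\in\mathcal{M}_{r+2,s}$ and $B_{r,s}\in\mathcal{M}_{r,s+2}$, so that $\LL A_{r,s}\in\mathcal{M}_{r+1,s-1}$ and $\LL B_{r,s}\in\mathcal{M}_{r-1,s+1}$ land in precisely the weight spaces on which $\overline{\partial}_{s-1}$ and $\partial_{r-1}$ are the relevant graded pieces of $\overline{\partial}$ and $\partial$, and the degree shift in $\overline{\partial}_{s-1}(\LL g) = \LL\,\overline{\partial}_s(g)$ is applied with the correct index. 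One should also check that the boundary cases $s=0$ and $r=0$, where some of $F_{r+1,s-1}$, $A_{r-1,s+1}$, $B_{r+1,s-1}$ are declared to vanish, are covered by the same manipulation.
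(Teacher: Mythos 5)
Your argument is correct and is essentially the paper's own proof: both routes apply Proposition \ref{propdF=A} twice together with $[\overline{\partial},\LL]=0$ to evaluate $\overline{\partial}_{s-1}\partial_r F_{r,s}$ (resp.\ $\partial_{r-1}\overline{\partial}_s F_{r,s}$) and then unwind the definition \eqref{LaplaceDef} of $\Delta_{r,s}$, the only cosmetic difference being that the paper deduces the equality of the two right-hand sides from $d^2F=0$, i.e.\ $\partial B_{r,s}-(r+1)B_{r+1,s-1}=\overline{\partial}A_{r,s}-(s+1)A_{r-1,s+1}$, whereas you obtain it by running the symmetric computation with the second formula for $\Delta_{r,s}$. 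One caveat: carried out to the end, both your substitution and the paper's own displayed computation yield $-\LL\big(\overline{\partial}A_{r,s}+(r+1)B_{r+1,s-1}\big)$, so the identity as printed holds only up to an overall sign, and you should not assert that the plus sign "collapses out" without flagging this discrepancy (it is internal to the paper and harmless for Corollary \ref{cor: Laplace}).
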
 

\begin{proof} The differential equation $d^2 F = 0$ implies that  ${\partial B \over \partial z} - {\partial A \over \partial \overline{z}}=0$. By proposition $\ref{propdF=A}$ this is equivalent to the equations
$$\partial B_{r,s}  - (r+1) B_{r+1,s-1} =      \overline{\partial} A_{r,s} - (s+1) A_{r-1,s+1}   $$
for all $r+s = 2n$, 
This shows that the two expressions $(\ref{LaplaceFrs})$ are equivalent. 
  Again by proposition  $\ref{propdF=A}$  and the relation $[\overline{\partial}, \LL]=0$, we verify that 
\begin{eqnarray} \overline{\partial} \partial F_{r,s }   &= &  (r+1)  \overline{\partial} F_{r+1, s-1} + \LL \,  \overline{\partial} A_{r,s} \nonumber  \\ 
 & = & (r+1)s  F_{r,s}  + \LL \,   \overline{\partial} A_{r,s}  + (r+1) \LL \, B_{ r+1 ,s-1 }\ . \nonumber 
\end{eqnarray} 
The statement follows from the definition of the Laplacian (\ref{LaplaceDef}).
\end{proof}

\subsection{Laplace operator structure for $\MI^E$}

\begin{cor} \label{cor: Laplace}
Every element $F \in \MI_k^E$ of modular weights $(r,s)$ satisfies an inhomogeneous Laplace equation of the following form:
$$ (\Delta + r+s )\, F  \quad \in \quad  (E+ \overline{E})[\LL] \times \MI^E_{k-1} + E \overline{E} [\LL]  \times \MI^E_{k-2} \ ,$$
where the  eigenvalue is minus the total modular weight.  
\end{cor}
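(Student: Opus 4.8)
The plan is to combine the differential structure of $\MI^E$ from Theorem~\ref{thm: moddiff} (and its Corollary) with the vector-valued Laplace identity of the preceding lemma. So first I would take $F\in\MI^E_k$ of modular weights $(r,s)$ and reconstruct, as in~\S\ref{remsl2onMIE}, the full vector-valued modular form $\mathbf{F}(\tau)=\sum_{r'+s'=r+s} F_{r',s'}(X-\tau Y)^{r'}(X-\overline\tau Y)^{s'}$ whose $(r,s)$-component is $F$; here each $F_{r',s'}\in\MI^E_k$. This $\mathbf F$ is the modular component of some coefficient function $c\colon\dV^{dR}_{2n}\to\Or(\Ue)$ with $2n=r+s$, and by the proof of Theorem~\ref{thm: moddiff} it satisfies a first-order system $d\,c(J^\eqv)=\tfrac{2\pi i}{2}(A\,dz+B\,d\overline z)$, where
$$A \;=\; \sum \lambda\,\delta^k_{dR}\!\Big(\GE_{2m+2}(\tau)(X-\tau Y)^{2m}\otimes c''(J^\eqv)\Big)$$
is a $\Q$-linear combination of terms with $c''$ a coefficient function of length $\le k-1$, and similarly $B$ is built from $\overline{\GE}$ and length-$\le k-1$ coefficient functions (using stability under complex conjugation). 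In other words $A$ has components in $E[\LL]\times\MI^E_{k-1}$ and $B$ in $\overline E[\LL]\times\MI^E_{k-1}$.

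Next I would simply feed this into equation~(\ref{LaplaceFrs}): for the $(r,s)$-component,
$$(\Delta+r+s)\,F \;=\; \LL\big(\overline\partial A_{r,s}+(r+1)B_{r+1,s-1}\big).$$
The term $(r+1)B_{r+1,s-1}$ already lies in $\overline E[\LL]\times\MI^E_{k-1}$, absorbing the extra factor $\LL$ into $E[\LL]$, which is closed under multiplication by $\LL$. For $\overline\partial A_{r,s}$ I apply $\overline\partial$ to a term $\GE_{2m+2}\cdot g$ with $g\in\MI^E_{k-1}$ a modular component (up to a power of $\LL$): since $\overline\partial$ is a derivation annihilating $\GE_{2m+2}$ (the anti-Leibniz part vanishes because $\GE_{2m+2}$ is holomorphic, hence killed by $\overline\partial$) and $\overline\partial\LL=0$, one gets $\GE_{2m+2}\cdot\overline\partial g$ up to powers of $\LL$. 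By the Corollary to Theorem~\ref{thm: moddiff}, $\overline\partial g\in\MI^E_{k-1}\oplus(\overline E[\LL]\times\MI^E_{k-2})$. Multiplying by $\GE_{2m+2}\in E$ and by $\LL$ then lands in $E[\LL]\times\MI^E_{k-1} \;+\; E\overline E[\LL]\times\MI^E_{k-2}$, exactly the claimed target. The alternative expression $(\Delta+r+s)F=\LL(\partial B_{r,s}+(s+1)A_{r-1,s+1})$ gives the complex-conjugate bookkeeping and a consistency check.

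The main obstacle, and the step needing genuine care rather than bookkeeping, is controlling the $\LL$-powers and the Leibniz terms precisely enough that the products land in $\MI^E[\LL^\pm]$ with poles bounded so that the stated membership (with integer, not $\LL^\pm$, coefficients on the $\MI^E_{k-1}$, $\MI^E_{k-2}$ factors times $E[\LL]$, $E\overline E[\LL]$) actually holds. Concretely: the terms $A_{r,s}$ coming out of $\delta^k_{dR}$ in Lemma~\ref{lemapplydeltak} carry explicit factors $(2\LL)^k$, and one must verify that after applying $\overline\partial$ — which does not commute with multiplication by modular components though it is a derivation — and multiplying by the extra $\LL$ from~(\ref{LaplaceFrs}), every resulting monomial is of the form (element of $E$ or $E\overline E$) $\times$ (polynomial in $\LL$) $\times$ (element of $\MI^E_{k-1}$ or $\MI^E_{k-2}$). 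Since $E$ and $\overline E$ contain no constants, the length drops are strict, and the argument terminates. The $M$-degree compatibility — needed only if one wants the refined graded statement — follows as in the Corollary, since $\deg_M\Delta=0$, $\deg_M\GE_{2m+2}=0$, $\deg_M\LL=1$, and the $\LL$-powers from $\delta^k_{dR}$ match its $M$-degree $k$.
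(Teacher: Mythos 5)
Your proposal is correct and follows essentially the same route as the paper: the paper's proof is precisely to feed the first-order system established in the proof of Theorem \ref{thm: moddiff} (equivalently, equations (\ref{holcrs}) and (\ref{antiholcrs})) into the vector-valued Laplace identity (\ref{LaplaceFrs}), with the Leibniz rule and $\overline{\partial}\,\GE_{2m+2}=0$ handling the term $\overline{\partial}A_{r,s}$ exactly as you describe. Your extra bookkeeping on the $\LL$-powers (only non-negative powers arise from Lemma \ref{lemapplydeltak} and the factor of $\LL$ in (\ref{LaplaceFrs})) is a correct filling-in of details the paper leaves implicit.
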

\begin{proof}
This follows from the previous lemma, and equations (\ref{holcrs}) and (\ref{antiholcrs}) (or by direct application of the definition of the Laplace operator, using these same two equations, and the Leibniz rule).
\end{proof} 
The sum in the right-hand side is direct, by corollary \ref{corLIoverModforms}. It could be written 
$$(E[\LL]  \times \MI^E_{k-1}) \quad \oplus \quad ( \overline{E}[\LL] \times \MI^E_{k-1})  \quad \oplus \quad (E \overline{E} [\LL]  \times \MI^E_{k-2})\ . $$

\section{Algebraic structure of $\MI^E$}
In this section we delve more deeply into the algebraic structure of $\MI^E$.
Since the space $\MI^E$ is generated from the coefficients of $\Or(\Ue)$, its structure is closely related to that of the geometric Lie algebra $\ue$. 
Although the precise structure of the latter is not completely known,   we can use the relationship with $\MI^E$ to transfer information back and forth between modular forms and derivations in $\ue$.

\subsection{Algebraic structure and dimensions}   Let us denote the  subspace of  lowest weight vectors  for $\SL_2$ in $\Or(\Ue)$ by 
$$\lw (\Or(\Ue)) \quad  \subset  \quad \Or(\Ue)\ .$$
 It is a subalgebra of $\Or(\Ue)$,  filtered by length, and graded with respect to $M$. 

 On the other hand, consider
$$\mathcal{M}_{\bullet, 0} = \bigoplus_{n\geq 0} \mathcal{M}_{n,0}$$
which defines a subalgebra of $\mathcal{M}$.  It consists of functions which transform like  classical modular forms (with no anti-holomorphic factor of automorphy).  Let us define 
$$\lw( \MI^{E})  =  \MI^E \cap \mathcal{M}_{\bullet, 0}\ ,$$
and call the elements lowest weight vectors. 
Although the Lie algebra $\ssl_2$ does not  act on $\MI^E$ \emph{per se}, the terminology is justified since images of  elements in $\lw (\MI^E)$ behave like lowest weight vectors 
in the context of remark \ref{remsl2onMIE}.

\begin{thm} \label{thmMIEandOU} The subspace 
$\lw (\MI^E) \subset \MI^E$
is closed under multiplication.

There is a canonical  $\ZZ^{\sv}$-linear isomorphism of algebras
\begin{equation} \label{OuetoMIE}  \gr^L_{\bullet}\,  \lw (\Or(\Ue)) \otimes  \ZZ^{\sv} \overset{\sim}{\To}  \gr^L_{\bullet} \,\lw (\MI^{E} )\end{equation} 
It respects the  $M$-filtration on both sides of the isomorphism. 
\end{thm}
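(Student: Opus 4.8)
The plan is to construct the map \eqref{OuetoMIE} directly from the assignment $c \mapsto c_{n,0}(J^{\eqv})$ that sends a coefficient function to its top modular component, and then to show that passing to the length-graded pieces turns this into an isomorphism. First I would check that $c \mapsto c_{n,0}(J^{\eqv})$ is well-defined on lowest weight vectors: if $c: \dV^{dR}_{2n}\to \Or(\Ue)$ lands in the lowest-weight subspace (equivalently, factors through the quotient of $\dV^{dR}_{2n}$ that is a lowest weight line for the $\SL_2$-action), then by Proposition \ref{propmodularformsfromsections} its modular component $c_{n,0}$ transforms with weights $(n,0)$, hence lies in $\lw(\MI^E)$. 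Conversely, by the reconstruction discussed in \S\ref{remsl2onMIE}, every element of $\lw(\MI^E)$ arises this way. Surjectivity of the map (after extending $\ZZ^{\sv}$-linearly) is then essentially the definition of $\MI^E$, combined with the observation that the whole vector-valued modular form $c(J^{\eqv})$ is recovered from $c_{n,0}$ via $\partial,\overline{\partial}$, so the lowest-weight components already generate $\lw(\MI^E)$ as a $\ZZ^{\sv}$-module.

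Next I would address multiplicative closure of $\lw(\MI^E)$. By Proposition \ref{prop: algebra} we already know $\MI^E[\LL^{\pm}]$ is an algebra; the point is that a product of two lowest-weight modular components is again of modular weight type $(m,0)$, hence has no negative powers of $\LL$ forced on it and lies in $\MI^E_{\bullet,0}$. Concretely, $\lw(\Or(\Ue))$ is a subalgebra of $\Or(\Ue)$ (the lowest-weight subspace is closed under the commutative product, since the projection onto the lowest $\SL_2$-isotypical piece of a tensor product of lowest weight vectors is a lowest weight vector), and the correspondence $c\mapsto c_{n,0}$ is compatible with multiplication up to powers of $\LL$ coming from the change of variables \eqref{dRXYinverse}; restricting to the lowest-weight slot $X^{2n}\otimes\cdots$ kills those powers of $\LL$, because $\Xv$ has $M$-degree $0$ and the $\LL$'s are exactly bookkeeping the $\Yv$'s. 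So the product of two lowest-weight components is again a lowest-weight component (up to $\ZZ^{\sv}$-coefficients), which simultaneously gives algebra-closure and shows the map in \eqref{OuetoMIE} is a ring homomorphism. Compatibility with the $M$-filtration is immediate from the lemma in \S\ref{sect: ModComp} stating $\deg_M c_{r,s}=\deg_M c$, together with $\deg_M\LL=1$ and the fact that the $M$-filtration on $\MI^E$ is induced from that on $\Or(\Ue)$.

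The heart of the argument — and the step I expect to be the main obstacle — is injectivity of the associated graded map $\gr^L\lw(\Or(\Ue))\otimes\ZZ^{\sv}\to\gr^L\lw(\MI^E)$. The subtlety is that $J^{\eqv}$ differs from the `naive' single-valued series $J(\tau)(\sv J(\tau))^{-1}$ by correction terms of lower length with $\ZZ^{\sv}$-coefficients (Definition \ref{defnJeqv} and the discussion after Theorem \ref{thmJeqv}), so a linear relation among the $c_{n,0}(J^{\eqv})$ could in principle hold only because of cancellations mixing different lengths. Passing to $\gr^L$ is designed precisely to remove this: on the top length-graded piece, $c_{n,0}(J^{\eqv})$ agrees (modulo lower length and modulo the $\phi^{\sv}$-corrections, which are themselves lower-length-changing) with the corresponding component of $J(\tau)\cdot(\text{complex conjugate})$, i.e. with an honest $\ZZ^{\sv}$-linear combination of products of real/imaginary parts of iterated integrals of Eisenstein series of total length $\ell$. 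Now suppose $\sum_i \lambda_i\, (c_i)_{n,0}(J^{\eqv}) \in \MI^E_{\ell-1}$ with $\lambda_i\in\ZZ^{\sv}$, where the $c_i$ represent a basis of $\gr^L_\ell\lw(\Or(\Ue))$. Write each side out in its $q,\overline q,\LL$-expansion: the left side becomes a $\ZZ^{\sv}$-linear combination of length-$\ell$ iterated integrals of $\GE$'s in $q$ against their conjugates in $\overline q$, and this must vanish. Here I would invoke Corollary \ref{corLIoverModforms} (linear independence of iterated integrals of holomorphic forms over $M[\tau]$, applied term-by-term in $q$ and $\overline q$) to conclude that all $\lambda_i$ vanish — bearing in mind that the map $\mu: \U^{dR}_E\to\Ue$ may collapse some iterated integrals, so one must work with a genuine basis of $\gr^L\lw(\Or(\Ue))$ rather than of $\Or(\U^{dR}_E)$, and check that the monodromy image of that basis remains independent in the coalgebra of iterated integrals; this uses that $\mu^*$ is injective on $\Or(\Ue)$ and that the pairing of $\Or(\Ue)$ against the $J$-coefficients detects every element (which is the content of $J$ being the universal horizontal section, i.e. the holonomy of a flat connection with no further relations beyond the shuffle and $\mu$-relations already built in). Modulo that bookkeeping, injectivity on $\gr^L$ follows, and combined with the surjectivity and ring-homomorphism properties above, \eqref{OuetoMIE} is an isomorphism of $M$-filtered $\ZZ^{\sv}$-algebras.
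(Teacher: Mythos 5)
Your proposal is correct and follows essentially the same route as the paper: the map $v \mapsto (c_v)_{n,0}(J^{\eqv})$ on lowest weight vectors, surjectivity from the definition of $\MI^E$, injectivity on $\gr^L$ via Corollary \ref{corLIoverModforms} applied to the leading length-$\ell$ terms after embedding $\Or(\Ue)$ into $\Or(\U^{dR}_E)$ by $\mu^*$, and multiplicativity (hence closure of $\lw(\MI^E)$ under products) via the projection $\delta^0_{dR}$ onto the top $\SL_2$-isotypic component, which introduces no powers of $\LL$. The one point you leave implicit is why the graded map is \emph{canonical}: twisting $(b^{\sv},\phi^{\sv})$ by $a\in(\Ue)^{\SL_2}(\ZZ^{\sv})$ changes $\chi$ only by terms of lower length, since multiplication is trivial on the associated graded of the lower central series --- the same mechanism you already invoke for the correction terms.
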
 

\begin{proof}
 Fix a choice of  elements $(b^{\sv},\phi^{\sv})$ as in theorem \ref{thmExistbphi} and define $J^{\eqv}(\tau)$ according to  definition \ref{defnJeqv}. A non-trivial lowest weight vector $v\in \mathrm{lw}(\Or(\Ue))$ of $\mathrm{SL}_2$-weight $n$ generates, under the action of $\SL_2$, an irreducible $\SL_2$-submodule
$$c_v: \dV^{dR}_{n}\quad \subset \quad \Or(\Ue)\ ,$$
and furthermore, every irreducible $\SL_2$-submodule  arises in this way. 
Taking the coefficient $c_v(J^{\eqv})$ and extracting the term $(c_v)_{n,0}$ in the manner of proposition $\ref{propmodularformsfromsections}$ defines a modular form in
$ \MI^E $ of modular weights  $(n,0)$. It is   given explicitly by
\begin{equation} \label{cvmodexpl}  \chi(v) \ =  \ \LL^{-2n}  c_v(J^{\eqv}) \big|_{X=  \pi i  \overline{z}, \, Y= \pi i } \quad \in \quad  \MI^E \cap \mathcal{M}_{n,0}\  .
\end{equation} 
 This extends to a $\ZZ^{\sv}$-linear
 map
$$\chi\ : \  \lw (\Or(\Ue)) \otimes \ZZ^{\sv} \To  \lw (\MI^{E} )$$
which respects the $M$ and $L$ filtrations.  It depends on the choice of $(b^{\sv},\phi^{\sv})$. 
By definition of $\MI^E$, every  modular form of weights $(n, 0)$ arises in this way, and $\chi$  is surjective.
To prove injectivity, it is enough to show that the associated graded of $\chi$ with respect to the length filtration is injective. For this, consider lowest weight vectors $v_1,\ldots, v_n$  in $L_k \Or(\Ue)$ which are linearly independent  in $\gr^L_k \Or(\Ue)$. They define linearly disjoint $\SL_2$-submodules  $c_{v_1},\ldots, c_{v_n}$ of $\Or(\Ue)$.
The dual of the monodromy homomorphism  (\ref{defnuEtoue}) defines an embedding
$$\Or(\Ue) \hookrightarrow   \Or(\U^{dR}_E) = T^c \big(\bigoplus_{n \geq 1} \EE_{2n+2} \dV^{dR}_{2n} \big) $$
By the linear independence of iterated integrals  (corollary \ref{corLIoverModforms}), 
the corresponding modular forms  $\chi(v_1), \ldots, \chi(v_n)$ are linearly independent modulo iterated integrals of length $\leq k-1$, since by 
the remarks following theorem  \ref{thmJeqv}, their leading terms are real and imaginary parts of iterated integrals of independent Eisenstein series. 
This proves injectivity. 

  We next show that $\chi$ is a homomorphism. Let $v_1, v_2 \in \lw (\Or(\Ue))$ of $\mathrm{SL}_2$-weights $n_1, n_2$.  Then $c_{v_1v_2}$ is   defined via the commuting diagram
$$  
\begin{array}{ccc}
 \dV^{dR}_{n_1} \otimes   \dV^{dR}_{n_2}  &  \longleftarrow   &  \dV^{dR}_{n_1+n_2} \\
  \downarrow_{c_{v_1}} \qquad \downarrow_{c_{v_2}} &   &  \qquad \downarrow_{c_{v_1v_2}}  \\
  \Or(\Ue) \otimes  \Or(\Ue)  & \overset{m}{\To}  & \Or(\Ue) \\
  \downarrow \quad \downarrow &   &  \downarrow  \\
 \C \otimes  \C & \overset{m}{\To}  & \C    
\end{array}
$$
  where   the map along the top is dual to $\delta_{dR}^0 : V^{dR}_{n_1}\otimes V^{dR}_{n_2} \rightarrow V^{dR}_{n_1+n_2}$,  
  the vertical maps in the bottom square are given by the homomorphism $J^{\eqv}(\tau) : \Or(\Ue) \rightarrow \C$,  and $m$ denotes multiplication. 
  The reason that the top square commutes is because the corresponding square with $\Or(\Ue)$ replaced with $\Or(\U^{dR}_E)$ commutes, and the monodromy map $\mu^*: \Or(\Ue)\subset \Or(\U^{dR}_E)$ is $\mathrm{SL}_2$ equivariant and respects multiplication. 
  It follows that 
    $$c_{v_1v_2} (J^{\eqv}) = \delta_{dR}^0( c_{v_1} (J^{\eqv}) c_{v_2}(J^{\eqv}))\ .$$
  From the definition of $\chi$, we obtain  $\chi(v_1v_2) = \chi(v_1)\chi(v_2)$. 
  Since $\chi$ is an isomorphism, this also implies that $\lw (\MI^E) \subset \MI^E$ is stable under multiplication.

Finally, the isomorphism (\ref{OuetoMIE}) is obtained by replacing $\chi$ with its associated graded for the length filtration.
It is well-defined since modifying $J^{\eqv}$ by $J^{\eqv} a$, for $a \in (\Ue)^{\SL_2}(\ZZ^{\sv})$ changes $\chi$ by terms of lower length. This 
is because multiplication is trivial on the associated graded for the lower central series.
\end{proof}

Note  that the action of $\ssl_2$ on $\Or(\Ue)$ does not correspond to the action of the differential operators $\partial, \overline{\partial}$ 
 on $\MI^E$.

\begin{rem}  \label{rem2step}
In fact, since $\ue$  has no  $\SL_2$-invariant generators  it follows that $(\ue)^{\SL_2} \subset [\ue, \ue]$ and hence the map $\chi$ defined in the proof is well-defined (independent of the choice of $(b^{\sv},\phi^{\sv})$) on the two-step quotients $L^k/L^{k+2} \,   \lw( \Or( \Ue))$.
\end{rem}

\subsection{Orthogonality to cusp forms}
It follows from theorem \ref{thmpartialsorthogtocusp} that the composite 
$$\lw(\MI^E) \overset{\partial}{\To} \mathcal{M}_{\bullet, -1} \overset{p^h}{\To} S\ ,$$
where $S$ is the complex vector space  of holomorphic cusp forms,
is the zero map. This sheds some  light on the structure of 
  $\Or(\Ue)$, and makes it  clear that the generators of $\ue$ have infinitely many relations coming from every cusp form.

More precisely, consider the linear map
\begin{eqnarray}
 P\quad :  \quad  \big(E[\LL]  \otimes   \MI^E\big)_{\bullet,-1}   &\To &  S[\LL]   \nonumber \\
 \LL^{k+1}  \GE_{a } \otimes F_{b,k}  & \mapsto &      \LL^{k}  p^h ( \GE_{a }  F_{b,k}) \nonumber
\end{eqnarray}
where $F_{b,k} \in \MI^E$ has  modular weights $(b,k)$. Then 
theorem \ref{thmpartialsorthogtocusp} implies that 
$$\partial    \big( \lw( \MI^E ) \big)  \quad  \subset \quad \mathrm{ker} (P)$$
which  provides, via theorem \ref{thmMIEandOU}, a  possible constraint on $\mathrm{lw}(\Or(\Ue))$ for every  $f \LL^d \in S[\LL]$, where $f$ is an eigen cusp form and  $d \geq 0$.

\subsection{Pollack relations via orthogonality}

\begin{thm}   There is  an exact sequence $$0 \To \lw (\gr^L_2 \MI^E) \overset{\partial}{\To} \big(E[\LL] \otimes \gr^L_1 \MI^E\big)_{\bullet, -1} \overset{P}{\To} S[\LL]\ .$$
\end{thm}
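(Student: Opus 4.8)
The plan is to verify exactness at each of the two middle terms. First I would treat exactness at $\lw(\gr^L_2\MI^E)$, i.e. injectivity of $\partial$. An element $F\in\lw(\gr^L_2\MI^E)$ has modular weights $(n,0)$ for some $n\geq 0$ (in fact $n\geq 2$), and by Theorem~\ref{thm: moddiff} we have $\partial F\in E[\LL]\times\MI^E_1$, since for weights $(n,0)$ the term $(r+1)c_{r+1,s-1}$ with $s=0$ is absent. Suppose $\partial F=0$. By Proposition~\ref{propModularKernel}, $\ker(\partial_n:\mathcal{M}_{n,0}\to\mathcal{M}_{n+1,-1})=\LL^{-n}\overline{M}_{-n}$, which is zero unless $n=0$, in which case it is $\C[\LL^{\pm}]$; but $F$ has length exactly $2$ in $\gr^L_2$, whereas constants lie in $\MI^E_0$, so $F=0$ in $\gr^L_2\MI^E$. (If $n>0$ the kernel vanishes outright.) This gives injectivity of $\partial$.

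Next I would handle exactness at $\big(E[\LL]\otimes\gr^L_1\MI^E\big)_{\bullet,-1}$, which is the substantive point. The inclusion $\partial(\lw(\gr^L_2\MI^E))\subseteq\ker P$ is immediate from Theorem~\ref{thmpartialsorthogtocusp}: for $F\in\lw(\MI^E)$, the function $\partial F$ is orthogonal to holomorphic cusp forms, so $p^h(\partial F)=0$, and since $\partial F$ is (by the length-$2$ case of Theorem~\ref{thm: moddiff}, together with Remark~\ref{remsplitRHS}) literally an element of $E[\LL]\times\gr^L_1\MI^E$, applying $P$ kills it. For the reverse inclusion, take an element $\xi=\sum\LL^{k+1}\GE_{a}\otimes F_{b,k}$ in $\ker P$. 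The idea is to integrate it: $\gr^L_1\MI^E$ is, by Theorem~\ref{thmMIEandOU} (the length-$1$ part), spanned over $\ZZ^{\sv}$ by the real analytic Eisenstein series $\mathcal{E}_{b,k}$ of \S\ref{sectRAEisenstein}, which satisfy the explicit differential equations \eqref{realanalholequation}. One knows from \cite{BrLevin, MMV} that every length-$2$ iterated Eisenstein integral, suitably combined, gives a candidate primitive $G$ of $\xi$ with respect to $\partial$ in the space $P^{-w}\mathcal{M}_{w,0}$, lying in length $2$; the freedom in choosing such a $G$ is, by Proposition~\ref{propModularKernel} again, a single function in $\LL^{-w}\overline{M}_{-w}$ (hence zero for $w>0$) plus possibly a constant. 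The key claim is that $G$ can be taken in $\MI^E$: this is exactly the content of the orthogonality condition. Indeed, $\partial$ maps $\lw(\gr^L_2\MI^E)$ into $\ker P$, and by Theorem~\ref{thmMIEandOU} the source has the same graded dimension as $\lw(\gr^L_2\Or(\Ue))$, which by the enumeration of length-$2$ relations in $\ue$ (the Pollack relations, \S\ref{sect: Relations}) has dimension equal to $\dim\big(E[\LL]\otimes\gr^L_1\MI^E\big)_{\bullet,-1}-\dim S[\LL]$ in each modular bidegree.

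The main obstacle is therefore a dimension count: one must show that the two inclusions $\partial(\lw(\gr^L_2\MI^E))\subseteq\ker P$ just proved is an \emph{equality}, equivalently that $\dim\lw(\gr^L_2\MI^E)=\dim\ker P$ in each fixed modular weight. The plan is to obtain $\geq$ from the identification \eqref{OuetoMIE} in Theorem~\ref{thmMIEandOU}: in length $2$, $\gr^L_2\lw(\Or(\Ue))$ is dual to $\gr^L_2\lw(\ue)=\wedge^2$ of the length-$1$ generators modulo the Pollack relations \eqref{epsilonrel1}, and the number of such relations in each $M$-degree and $\SL_2$-weight is, by the results recalled in \S\ref{sect: Relations} (and originally \cite{Pollack}), exactly $\dim S[\LL]$ in the corresponding bidegree — one relation per cusp form per power of $\LL$. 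Combined with injectivity of $\partial$ this gives $\dim\lw(\gr^L_2\MI^E)=\dim\ker P$. The cleanest way to package this is: $P$ is surjective onto its image, $\partial$ is injective with image inside $\ker P$, and the alternating sum of dimensions (using Theorem~\ref{thmMIEandOU} for the first term and the Pollack count for $\dim S[\LL]$ versus $\dim\ker P$) forces $\mathrm{im}\,\partial=\ker P$. I would flag that the precise matching of the Pollack relation count with $\dim S[\LL]$ in every modular bidegree is the one input that genuinely needs \cite{Pollack} (or the Eichler–Shimura/period-polynomial description of the relations), and is where a careless argument would go wrong.
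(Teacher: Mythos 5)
Your handling of the first two pieces (injectivity of $\partial$ via proposition \ref{propModularKernel}, and the inclusion $\partial(\lw(\gr^L_2\MI^E))\subseteq\ker P$ via theorem \ref{thmpartialsorthogtocusp}) matches the paper, modulo the small point that on the associated graded one must rule out $\partial F$ lying in $E[\LL]\otimes\ZZ^{\sv}$ rather than being literally zero; this is easily repaired using the length-one differential equations \eqref{realanalholequation}.

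The genuine gap is in the reverse inclusion $\ker P\subseteq\mathrm{im}\,\partial$, where your dimension count does not close. Fix a modular bidegree and write $N=\dim\big(E[\LL]\otimes\gr^L_1\MI^E\big)_{\bullet,-1}$ and $s=\dim S[\LL]$ there. Pollack's enumeration, fed through theorem \ref{thmMIEandOU}, gives $\dim\lw(\gr^L_2\MI^E)=N-s$; combined with injectivity and the inclusion $\mathrm{im}\,\partial\subseteq\ker P$ this yields $N-s\leq\dim\ker P=N-\dim\mathrm{im}\,P$, i.e.\ $\dim\mathrm{im}\,P\leq s$ --- which is vacuous, since $\mathrm{im}\,P\subseteq S[\LL]$ in any case. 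To force $\mathrm{im}\,\partial=\ker P$ you need $\dim\mathrm{im}\,P=s$, that is, surjectivity of $P$ onto $S[\LL]$; your remark that ``$P$ is surjective onto its image'' supplies nothing. That surjectivity is precisely the nontrivial analytic input the paper invokes: the holomorphic projections $p^h(\GE_a\,\mathcal{E}_{b,k})$ are computed by the Rankin--Selberg unfolding method in \cite{MMV}, \S 9, and one must check that the resulting critical $L$-values of cusp forms do not all vanish. Your first, alternative argument --- integrating $\xi\in\ker P$ to a primitive $G$ and asserting that $G$ can be taken in $\MI^E$ ``by orthogonality'' --- is not a proof: theorem \ref{thmpartialsorthogtocusp} says that images of $\partial$ are orthogonal to cusp forms, not that every element orthogonal to cusp forms admits an equivariant primitive in $\MI^E$; the latter is an extension-of-cocycles statement whose proof is again the content of \cite{MMV}. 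Finally, note that your route inverts the paper's logic: the paper uses this exact sequence, together with the Rankin--Selberg computation, to \emph{derive} the Pollack relations of \S\ref{sect: Relations}, so importing Pollack's count as an input forfeits that application while still leaving you in need of the same Rankin--Selberg surjectivity statement.
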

This theorem is a consequence of results in \cite{MMV}. 
The last map is well-defined because $L_1 \MI^E \cong \ZZ^{\sv} \oplus \gr^L_1 \MI^E$ is split.  
The injectivity of the first map follows from proposition \ref{propModularKernel}.
Since $\gr^L_1 \MI^E $ is generated by real analytic Eisenstein series $\mathcal{E}_{r,s}$, the holomorphic projections
can be computed \cite{MMV}, \S9 using the Rankin-Selberg method, and give special values of $L$-functions of cusp forms.   One can show by this method
that the last map in the above sequence is surjective after tensoring with $\C$.

  Via theorem \ref{thmMIEandOU}  one can deduce the dimension of $\gr^L_2 \lw(\Or(\Ue))$, and indeed a description of the relations in its dual,  $\gr^L_2 \ue$.   This provides a modular interpretation of Pollack's quadratic relations \cite{Pollack}, \S\ref{sect: Relations}.   Note that multiple zeta values play no role in this calculation.

\subsection{Linearized double shuffle equations} \label{sect: PLS}
A connection  between $\ue$ and linearised double shuffle equations was described in \cite{Sigma}.
We briefly recall the statement.

The evaluation map (\ref{evx}), for $x= \av$, provides an embedding
$$\ue \  \hookrightarrow  \  \Lie(\av,\bv) \subset T^c(\av,\bv).$$
  The tensor coalgebra $T^c(\av,\bv)$   is graded by the degree in $\bv$. 
Consider the linear map 
\begin{eqnarray} 
\rho: \gr^{r}_{\bv}\, T^c(\av,\bv) &  \To &   \Q(x_1,\ldots, x_r)    \nonumber \\
\av^{i_0} \bv \av^{i_1} \bv \ldots \bv \av^{i_r} & \mapsto &    { x_1^{i_1} \ldots x_r^{i_r}  \over x_1(x_1-x_2) \ldots (x_{r-1}-x_{r})x_r  } \  \nonumber
\end{eqnarray} 
for $r\geq 1$.
The space $\pls^r$ was defined in \cite{Sigma} to be a graded  vector space of  homogeneous rational functions $f$  in the variables $x_1,\ldots, x_r$, for all $r\geq 2$
with  the property  $$x_1(x_1-x_2) \ldots (x_{r-1} -x_r)x_r f  \ \in  \  \Q[x_1,\ldots, x_r]$$
 which satisfy the linearised double shuffle equations.  For  $r=2$, they are
 \begin{eqnarray} 
  f(x_1,x_2) + f(x_2,x_1) & = & 0 \nonumber \\
  f(x_1,x_1+x_2) + f(x_2 ,x_1+x_2) & = & 0 \nonumber 
  \end{eqnarray}
  and for $r=3$, these equations take the form
 \begin{eqnarray} 
  f(x_1,x_2,x_3) + f(x_2,x_1,x_3) + f(x_2,x_3,x_1) & = & 0 \nonumber \\
  f(x_1,x_{12}, x_{123}) + f(x_2 ,x_{12},x_{123} )  + f(x_2 ,x_{23},x_{123} ) & = & 0 \nonumber 
  \end{eqnarray} 
  where $x_{ij} = x_i+x_j$ and $x_{123}= x_1+x_2+x_3$. 
  The space $\pls =\oplus_{r\geq 1} \pls^r$ is stable under a graded version of the Ihara Lie bracket, and is equipped with an  action of $\ssl_2$. 
\begin{thm} \label{thm: ueandpls} \cite{Sigma}
The linear map $\rho$, when restricted to $\ue$, defines  an injective map of bigraded Lie algebras
$$\rho: \ue \To \pls$$
which commutes with the action of $\ssl_2$ on both sides. 
\end{thm}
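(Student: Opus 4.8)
The plan is to verify four things about the map $\rho\circ\mathrm{ev}_{\av}\colon\ue\to\Q(x_1,\ldots,x_r)$, where $\mathrm{ev}_{\av}$ is the embedding of $(\ref{evx})$: that its image lies in $\pls$, i.e.\ satisfies the two families of linearised double shuffle relations; that it carries the commutator bracket on $\ue$ to the linearised Ihara bracket on $\pls$; that it is equivariant for the $\ssl_2$-actions on both sides; and that it is injective. Since $\mathrm{ev}_{\av}$ is already injective by $(\ref{evx})$, the last point amounts to showing that $\rho$ kills no nonzero element of $\mathrm{ev}_{\av}(\ue)$, and the content of the theorem is concentrated in the first two points.

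First I would set up the bracket dictionary on the image of $\mathrm{ev}_{\av}$. For $\delta_1,\delta_2\in\ue$ put $f_i=\delta_i(\av)\in\Lie(\av,\bv)$; the relation $\delta_i(\Theta)=0$ recovers $\delta_i(\bv)$ from $f_i$, and $[\delta_1,\delta_2](\av)=\delta_1(f_2)-\delta_2(f_1)$, the Ihara bracket. The map $\rho$ is graded for the $\bv$-degree (the depth) and forgets the exponent $i_0$ of the left-most $\av$-block, so the strategy is to organise the computation by depth and to observe that the portion of $\delta_1(f_2)$ seen by $\rho$ is exactly the substitution defining the linearised Ihara bracket of the two rational functions, the discarded portion being absorbed into the left-most exponent. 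Carrying this out, using $\mathrm{ad}(-\varepsilon_0)^m\varepsilon^{\vee}_{2n+2}=0$ for $m>2n$ (Lemma $\ref{lemsl2onepsilons}$) so that all sums truncate, should show simultaneously that $\rho$ is a morphism of Lie algebras and --- since $\pls$ is closed under its bracket and under the $\ssl_2$-action, while $\ue$ is generated by the $\varepsilon^{\vee}_{2n+2}$ under bracket and the $\ssl_2$-action --- that $\rho(\ue)\subseteq\pls$, once the generators are checked directly.

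The crux is therefore checking the two linearised double shuffle relations on $\rho$ of the generators $\varepsilon^{\vee}_{2n+2}$ and their brackets. The ``shuffle'' family should fall out of the fact that $\mathrm{ev}_{\av}(\ue)$ consists of Lie elements (primitive for deconcatenation) together with the rule linking $\delta(\av)$ and $\delta(\bv)$: after applying $\rho$, primitivity becomes precisely the stated cyclic identities. The ``$\ast$'' (stuffle) family --- the relations involving $x_{12}$, $x_{123}$, \ldots\ --- is the hard point: it has to be extracted from the single \emph{geometric} constraint $\delta([\av,\bv])=0$ together with the explicit Tsunogai formula for $\varepsilon^{\vee}_{2n+2}(\bv)$, by a generating-function manipulation that converts this into an \emph{arithmetic} identity. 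This conversion, ``geometric relation $\Rightarrow$ double shuffle relation'', is the technical heart of \cite{Sigma} and, I expect, the main obstacle; the bookkeeping of signs and of the truncation from Lemma $\ref{lemsl2onepsilons}$ is where the real work lies.

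The last two points are comparatively soft. For $\ssl_2$-equivariance, one has $\mathrm{ev}_{\av}(\varepsilon^{\vee}_0)=\bv$, and the operators $\mathrm{ad}(\varepsilon_0)$, $\mathrm{ad}(\varepsilon^{\vee}_0)$ on $\ue$, transported to $\mathrm{ev}_{\av}(\ue)$, match after applying $\rho$ the raising and lowering operators of the $\ssl_2$-action on $\pls$; since $\rho$ is a Lie homomorphism and $\ue$ is generated from the $\varepsilon^{\vee}_{2n+2}$ under bracket and $\ssl_2$, it suffices to verify this on the generators. For injectivity, a Lie element of strictly positive $\bv$-degree in $\Lie(\av,\bv)$ is determined by the ``collapsed'' coefficients $\sum_{i_0}c_{i_0,i_1,\ldots,i_r}$ that $\rho$ records, the left-most exponents being forced by the bracketing relations among Lie polynomials; hence $\rho$ is injective on $\mathrm{ev}_{\av}(\ue)$. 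This last step is the one I would re-examine with care, since $\rho$ is visibly not injective on all of $T^c(\av,\bv)$.
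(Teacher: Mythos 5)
First, a point of comparison: the paper does not actually prove Theorem \ref{thm: ueandpls} --- it is imported verbatim from \cite{Sigma} (``we briefly recall the statement''), so there is no in-paper argument to measure your proposal against. Judged on its own terms, your outline correctly identifies the architecture of the proof in the cited reference: the first family of linearised double shuffle relations reflects the fact that $\mathrm{ev}_{\av}(\delta)=\delta(\av)$ is a Lie element, the second family must be extracted from the single geometric constraint $\delta([\av,\bv])=0$, and the reduction to the generators $\varepsilon^{\vee}_{2n+2}$ goes through the compatibility of the commutator bracket on $\ue$ with the graded Ihara bracket on $\pls$. But as written this is a plan rather than a proof: the step you yourself flag as ``the technical heart'' --- converting $\delta(\Theta)=0$ into the relations involving $x_{12}, x_{123},\ldots$ --- is exactly where the entire content of the theorem sits, and it is only named, not performed. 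The same goes for the bracket-compatibility computation that your reduction to generators relies on. Until at least one of these is carried out, nothing has been established.

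Second, your treatment of injectivity rests on a misreading of $\rho$. You worry that $\rho$ forgets the exponent $i_0$ of the left-most $\av$-block and propose to recover it from ``bracketing relations among Lie polynomials,'' flagging this as the step to re-examine. In fact there is nothing to recover: $\ue$ is bigraded, and on a bihomogeneous component every word $\av^{i_0}\bv\av^{i_1}\cdots\bv\av^{i_r}$ occurring in $\delta(\av)$ has fixed $\bv$-degree $r$ and fixed total $\av$-degree $n$, so $i_0=n-(i_1+\cdots+i_r)$ is already determined by the monomial $x_1^{i_1}\cdots x_r^{i_r}$. Hence $\rho$ is injective on each bihomogeneous component of $T^c(\av,\bv)$, and injectivity of $\rho|_{\ue}$ follows at once from the injectivity of $\mathrm{ev}_{\av}$ in (\ref{evx}); no property of Lie elements is needed, and the argument you sketch (determination of a Lie element by its ``collapsed'' coefficients) is both unnecessary and not obviously true in the inhomogeneous setting where it would actually be required.
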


By theorem \ref{thmMIEandOU} we deduce a canonical surjection 
$$\gr^{\bullet}_L \lw ( \Or(\mathcal{P})) \otimes \ZZ^{\sv} \To   \gr^{\bullet}_L \lw (\MI^E)\ ,$$
where $\mathcal{P}$ denotes the affine group scheme corresponding to the graded Lie algebra $\pls$.
\\

The $\bv$-degree on $\ue$ coincides with the grading $r$ on $\pls$.  In \cite{Sigma}, we proved:

\begin{thm} The map  $\rho:\ue \rightarrow \pls$ is an isomorphism in $\bv$-degrees $\leq 3$. If 
$$u_k(s) = \sum_{n \in \Z} s^n  \dim_{\Q} (\gr^B_k  \gr^M_{2n} \ue) $$
denotes its Poincar\'e series, then 
$$ u_1 (s)  =   {s \over 1-s^2}  \quad  , \quad u_2 (s)   =   {s^2 \over (1-s^2)(1-s^6)} \quad , \quad u_3 (s)   =   {s \over (1-s^2)(1-s^4)(1-s^6)}$$
\end{thm}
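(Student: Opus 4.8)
The plan is to reduce everything to a dimension count in each bidegree, which is possible because we already know from Theorem~\ref{thm: ueandpls} that $\rho\colon\ue\to\pls$ is an \emph{injective} homomorphism of bigraded Lie algebras intertwining the two copies of $\ssl_2$. Since $\rho$ preserves both gradings and each graded piece of $\ue$ is finite-dimensional (Theorem~\ref{prop: finite}), proving that $\rho$ is an isomorphism in $\bv$-degrees $\le 3$ amounts to checking, for $k\le 3$, that $\gr^B_k\ue$ and $\pls^k$ have the same dimension in each $M$-degree; the explicit rational functions $u_1,u_2,u_3$ then come out of computing whichever side is more convenient. So the argument splits into the three depth layers, and in each we compute the $\ue$-side and the $\pls$-side independently and compare.

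In depth one both sides are immediate. The depth-one part of $\ue$ is generated, under $\ssl_2$, by the $\varepsilon_{2n+2}$ for $n\ge 1$, and the $M$-degrees are dictated by those of the generators $\e_{2n+2}V^{dR}_{2n}$ (Lemma~\ref{lemsl2onepsilons}); reading this off gives $u_1(s)=s/(1-s^2)$, and $\rho$ visibly carries $\gr^B_1\ue$ onto the depth-one part of $\pls$, no shuffle constraint being imposed in depth one. In depth two the $\pls$-side is the solution space of the two linearised double-shuffle equations in the variables $x_1,x_2$: after clearing the denominator $x_1x_2(x_1-x_2)$ this is a finite problem in each weight, and by the Eichler--Shimura isomorphism its solutions are, up to an Eisenstein term, the even period polynomials of cusp forms; keeping track of the $\ssl_2$-grading yields $u_2(s)=s^2/((1-s^2)(1-s^6))$. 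The $\ue$-side is the span of the brackets $[\varepsilon_0^{\,i}\varepsilon_{2a+2},\varepsilon_0^{\,j}\varepsilon_{2b+2}]$ of~(\ref{indepelements}) modulo the Jacobi identity and Pollack's quadratic relations~(\ref{epsilonrel1}); by Theorem~\ref{thmMIEandOU} its Poincar\'e series is read off from that of $\gr^L_2\lw(\MI^E)$, and the exact sequence $0\to\lw(\gr^L_2\MI^E)\to(E[\LL]\otimes\gr^L_1\MI^E)_{\bullet,-1}\to S[\LL]$ --- whose last map is surjective after $\otimes\,\C$ by the Rankin--Selberg computation --- expresses the latter as $\dim(E[\LL]\otimes\gr^L_1\MI^E)_{\bullet,-1}-\dim S[\LL]$. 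Both summands are explicit bookkeeping (products of holomorphic and real-analytic Eisenstein series; classical dimensions of cusp-form spaces), and multiple zeta values drop out; the two computations agree, giving the depth-two isomorphism.

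Depth three is the crux. On the $\pls$-side one must solve the linearised double- and triple-shuffle equations in $x_1,x_2,x_3$; this is a larger but still finite computation in each weight whose output, after organising it, is the rational function $u_3(s)=s/((1-s^2)(1-s^4)(1-s^6))$, the product form of the denominator reflecting an essentially free module structure of $\pls^3$ over the depth-$\le 2$ data. On the $\ue$-side one needs the matching lower bound: exhibit, in each bidegree, as many linearly independent length-three brackets of the generators $\varepsilon_0^{\,i}\varepsilon_{2n+2}$ as the $\pls$-count predicts. The delicate point is that the relation ideal of $\ue$ need not a priori be generated in length two, so one must show that in depth three no relations occur beyond those produced from Pollack's quadratic relations~(\ref{epsilonrel1}) by the Jacobi identity; this is checked by a finite computation comparing the free Lie algebra $\uu^{dR}_E$ on $\bigoplus_n\e_{2n+2}V^{dR}_{2n}$, modulo the depth-$\le 3$ relations, against the $\pls$-dimensions. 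Once $\dim\gr^B_3\ue\ge\dim\pls^3$ in every bidegree, injectivity of $\rho$ forces equality and hence the isomorphism.

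The main obstacle is exactly this depth-three matching. In depth two the orthogonality of $\partial(\lw\MI^E)$ to holomorphic cusp forms (Theorem~\ref{thmpartialsorthogtocusp}) gives a clean exact sequence that nails down the dimension; the depth-three analogue would require `higher' Rankin--Selberg convolutions, which are not available, so one cannot simply iterate the $\MI^E$-orthogonality argument. One is therefore forced to compute $\pls^3$ head-on --- solving the triple-shuffle equations and recognising the resulting dimensions as the stated rational function is itself non-trivial --- and to combine that with an explicit spanning set for $\gr^B_3\ue$ together with the verification that no unexpected cubic relations appear among the $\varepsilon_{2n+2}$.
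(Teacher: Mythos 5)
First, a point of reference: the paper does not actually prove this theorem. It is quoted verbatim from \cite{Sigma} (``In \cite{Sigma}, we proved:''), so there is no internal argument to compare yours against; any proof must reconstruct the content of that external paper. Your overall reduction is sound as far as it goes: given the injectivity of $\rho$ from theorem \ref{thm: ueandpls} and the finite-dimensionality of each bigraded piece, the isomorphism statement is equivalent to equality of dimensions in every bidegree, and your depth-one and depth-two layers are reasonable. (In depth two you lean on the surjectivity of $P\otimes\C$ in the exact sequence $0 \to \lw(\gr^L_2 \MI^E) \to (E[\LL]\otimes\gr^L_1\MI^E)_{\bullet,-1} \to S[\LL]$, which this paper only asserts ``one can show by this method''; that is an acceptable dependency, and it is a genuinely different route from the direct algebraic computation with Pollack's relations, trading combinatorics of the $\varepsilon_{2n+2}$ for Rankin--Selberg input.)

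The genuine gap is depth three, and you have identified it yourself without closing it. Since $\rho$ is injective, $\dim \gr^B_3\gr^M_{2n}\ue \le \dim \gr^M_{2n}\pls^3$ automatically, so the entire content of the theorem in depth three is the reverse inequality in \emph{every} $M$-degree, together with the closed form of $\sum_n s^n \dim \gr^M_{2n}\pls^3$. Your proposal reduces this to ``solving the triple-shuffle equations'' and ``a finite computation comparing the free Lie algebra modulo the depth-$\le 3$ relations against the $\pls$-dimensions.'' That is not a proof: the computation is finite only in each fixed weight, whereas the assertion $u_3(s)=s/((1-s^2)(1-s^4)(1-s^6))$ and the surjectivity of $\rho$ in depth three are statements about all weights simultaneously, and no finite amount of weight-by-weight checking establishes them. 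A uniform structural argument is required --- in \cite{Sigma} this is the main theorem, obtained via a detailed analysis of period polynomials and the highest-weight decomposition of $\pls^3$, and it is precisely the step your write-up flags as ``the crux'' and then describes rather than carries out. Likewise, exhibiting ``as many linearly independent length-three brackets as the $\pls$-count predicts'' amounts to a rank computation for an infinite family of matrices; asserting that no cubic relations occur beyond those generated by \eqref{epsilonrel1} and Jacobi is exactly the hard part, not a verification. As it stands the proposal proves the theorem in depths one and two (modulo the surjectivity input) and only restates the problem in depth three.
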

Note that in \cite{Sigma}, we included $\varepsilon_0$ in the definition of $\ue$, which led to a marginally different formula for $u_1(s)$. 

\begin{rem} The bigraded Lie subalgebra 
$$\ls = \bigoplus_{r\geq1} \pls \cap \Q[x_1,\ldots, x_r]$$
of solutions to  linearised double shuffle equations in polynomials (i.e., with no poles) is  related to the structure of depth-graded motivic multiple zeta values \cite{IKZ, BrDepth}.  It follows from theorem \ref{thmMIEandOU} that the structure of $\MI^E$ is intimately connected to the structure of depth-graded multiple zeta values.  It would be interesting to compare with \cite{GKZ}. 
\end{rem} 

\section{$L$-functions associated to modular forms in $\MI^E$}
Hecke associated an $L$-function to every  classical holomorphic modular form. One can do the same for   functions in $\MI^E$. 
 
\begin{defn} Let $f\in \MI^E$ and let $f^0$ denote its constant part. Its completed $L$-function is defined for $\mathrm{Re}(s)$ large by  the regularised Mellin transform:
$$\Lambda(f; s)  = \int_0^{\infty} \left( f(iy) - f^0(iy)  \right) y^s \frac{dy}{y} \ . $$   
\end{defn} 
The definition applies to a much more general class of modular functions  (\cite{ZagFest}, \S10).

\begin{prop} If $f$ has modular weights $(\alpha, \beta)$, the function $\Lambda(f; s)$ admits a meromorphic continuation to $\C$ and satisfies the functional equation 
$$  \Lambda(f;  s) = i^{h}  \Lambda(f; w-s)  $$
where $w= \alpha+\beta$ and $h = \alpha- \beta$.   It has at most simple poles of the form
$$\sum_k  (-2\pi )^k   a_{0,0}^{(k)}  \left( \frac{i^h}{s-w-k} - \frac{1}{s+k} \right)\ , $$
where $f^0 = \sum_k  a^{(k)}_{0,0} \LL^k$ is the constant part of $f$. Furthermore, it can be expressed in terms of Dirichlet series as follows. Let
$$   L^{(k)}(f; s)  =\sum_{\substack{m,n \geq 0 \\    m+n \geq 1}}  \frac{a^{(k)}_{m,n} }{(m+n)^s}$$
where $a^{(k)}_{m,n}$ are the expansion coefficients  of \eqref{intro: fqcoeffs} of $f$. Then 
$$\Lambda(f; s) = \sum_k (-1)^k (2\pi)^{-s} \Gamma(s+k)  L^{(k)}(f;s+k) \ . $$
\end{prop}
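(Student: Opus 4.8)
The plan is to compute the regularised Mellin transform directly from the $q,\overline q,\LL$-expansion of $f$ guaranteed by Theorem~\ref{thm: SVMZVcoeff}, then read off the analytic continuation, the poles, and the functional equation. First I would substitute $z=iy$ in the expansion \eqref{intro: fqcoeffs}: since $q=e^{2\pi i z}=e^{-2\pi y}$ and $\overline q = e^{-2\pi y}$ as well on the imaginary axis, one has $q^m\overline q^n = e^{-2\pi(m+n)y}$ and $\LL = \log|q| = -2\pi y$. Hence
$$
f(iy) - f^0(iy) \;=\; \sum_k (-2\pi y)^k \sum_{\substack{m,n\geq 0\\ m+n\geq 1}} a^{(k)}_{m,n}\, e^{-2\pi(m+n)y}\ .
$$
Plugging this into $\int_0^\infty(\,\cdot\,)\,y^s\,\tfrac{dy}{y}$ and interchanging sum and integral (justified for $\mathrm{Re}(s)$ large, since only finitely many $k$ occur and the series in $e^{-2\pi(m+n)y}$ converges geometrically), the inner integral is $\int_0^\infty y^{s+k}e^{-2\pi(m+n)y}\tfrac{dy}{y} = (2\pi(m+n))^{-(s+k)}\Gamma(s+k)$. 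Collecting terms gives exactly
$$
\Lambda(f;s) \;=\; \sum_k (-1)^k (2\pi)^{-s}\,\Gamma(s+k)\, L^{(k)}(f;s+k)\ ,
$$
which is the Dirichlet-series formula; note $(-2\pi)^k (2\pi)^{-(s+k)} = (-1)^k (2\pi)^{-s}$.

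Next, for the meromorphic continuation and the location of poles, I would use the standard device of splitting $\int_0^\infty = \int_0^1 + \int_1^\infty$ and applying the modularity $f(-1/z)=z^\alpha \bar z^\beta f(z)$ with $z=iy$, so that $f(i/y) = i^{\alpha-\beta} y^{\alpha+\beta} f(iy) = i^h y^w f(iy)$. Writing $g(y) = f(iy)-f^0(iy)$ and substituting $y\mapsto 1/y$ in the $\int_0^1$ piece, one obtains a representation of $\Lambda(f;s)$ as $\int_1^\infty g(y)y^s\tfrac{dy}{y} + i^h\int_1^\infty g(y)y^{w-s}\tfrac{dy}{y}$ plus an elementary term coming from the mismatch $f^0(i/y) - i^h y^w f^0(iy)$ (which is a finite sum of powers $y^{k'}$). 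The two $\int_1^\infty$ integrals are entire in $s$ because $g(y)$ decays exponentially as $y\to\infty$ (the $m+n\geq 1$ condition kills the constant term), and the elementary term contributes the finitely many simple poles. Tracking the power-of-$y$ terms: $f^0(i/y) = \sum_k(-2\pi/y)^k a^{(k)}_{0,0}$, and $\int_1^\infty y^{-k}y^s\tfrac{dy}{y} = \tfrac{1}{k-s}$ for $\mathrm{Re}(s)$ small, analytically continued to $\tfrac{1}{k-s} = -\tfrac{1}{s-k}$; combined with the $i^h$-twisted copy this yields precisely the stated polar part $\sum_k (-2\pi)^k a^{(k)}_{0,0}\bigl(\tfrac{i^h}{s-w-k}-\tfrac{1}{s+k}\bigr)$ (after matching the index shifts carefully). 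The functional equation $\Lambda(f;s)=i^h\Lambda(f;w-s)$ is then manifest from the symmetry $s\leftrightarrow w-s$ of this representation, once one checks that the elementary polar term is itself invariant under $s\mapsto w-s$ composed with multiplication by $i^h$ — which it is, since the two sums in the polar part swap (using $(i^h)^2 = i^{2h} = (-1)^h$ and the fact that $w$ is even so $h$ has the same parity as $w$, hence $(-1)^h=(-1)^w=1$ on each relevant component).

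The main obstacle I anticipate is not any single step but the bookkeeping: justifying the interchange of the (finite in $k$, infinite in $m,n$) sum with the integral uniformly, and above all getting the index shifts and signs in the polar term exactly right — in particular reconciling the two natural ways of writing the continuation (the $\Gamma(s+k)L^{(k)}(f;s+k)$ form has its poles from the poles of $\Gamma$ at $s+k \in \Z_{\leq 0}$ and from the pole of the Dirichlet series $L^{(k)}$, and one must check these reproduce exactly the $\tfrac{1}{s+k}$ and $\tfrac{i^h}{s-w-k}$ terms and nothing more). A clean way to handle this is to note that $L^{(k)}(f;s) = \sum_{N\geq 1} b^{(k)}_N N^{-s}$ with $b^{(k)}_N = \sum_{m+n=N} a^{(k)}_{m,n}$ a polynomially-bounded arithmetic function, so $L^{(k)}$ is a finite $\Q$-linear combination (with polynomial-in-$N$ coefficients) of shifted Riemann zeta values, whose only pole is simple at $s=1$ shifted appropriately; then $\Gamma(s+k)$ contributes simple poles at $s=-k,-k-1,\dots$, but all but $s=-k$ are cancelled because $g(y)$ is smooth at $y=\infty$ after subtracting $f^0$ — equivalently, because the Dirichlet series $L^{(k)}$ evaluated at those points vanishes against the Taylor coefficients, an observation that is cleanest to verify through the $\int_0^1+\int_1^\infty$ splitting rather than from the $\Gamma\cdot L$ formula. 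Everything else (exponential decay of $g$, convergence of the Mellin integral for $\mathrm{Re}(s)\gg 0$, the modular substitution) is routine given Theorem~\ref{thm: SVMZVcoeff} and the modularity of $f$.
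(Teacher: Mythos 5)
Your argument is correct and is precisely the classical Hecke unfolding argument on which the paper relies: the paper states this proposition without proof, deferring to the companion paper \cite{ZagFest}, \S 10 (and to \cite{MultipleL} for the regularised Mellin transform), where exactly your computation --- term-by-term integration of the $q,\overline{q},\LL$-expansion on the imaginary axis for $\mathrm{Re}(s)\gg 0$ to get the $\Gamma(s+k)L^{(k)}(f;s+k)$ formula, then the $\int_0^1+\int_1^\infty$ splitting combined with $f(i/y)=i^{h}y^{w}f(iy)$ for the continuation, poles and functional equation --- is carried out. The only point you gloss over is the justification of the sum--integral interchange, which needs polynomial bounds on the coefficients $a^{(k)}_{m,n}$ (geometric decay of $e^{-2\pi(m+n)y}$ degenerates as $y\to 0$, so one should invoke Fubini--Tonelli with $\sum_{N}|b^{(k)}_N|N^{-\sigma-k}<\infty$ for $\sigma$ large); with that noted, your identification of the polar part and the parity argument $i^{2h}=(-1)^{h}=(-1)^{w}=1$ (using that $w$ is even on every nonzero component of $\mathcal{M}$) are both exactly right.
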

 
Since the functions in $ \MI^E$ are naturally associated to universal mixed elliptic motives, their  $L$-functions are liable to contain 
interesting arithmetic information.  We do not know  the values of $\Lambda(f;s)$ for integers $s>w$, even when
 $f$ a product of  two real analytic Eisenstein series.

\subsection{Compatibility with the differential structure}
\begin{lem} \label{lemLambdapartial} The following relation holds for all $s\in \C$, where $w= \alpha+ \beta$ 
$$\Lambda(\partial f ; s) + \Lambda(\overline{\partial} f ; s) + (2s - w) \Lambda(f;s)=0\ .$$
\end{lem}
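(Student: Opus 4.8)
The plan is to reduce the statement to the Mellin transform along the imaginary axis $z=iy$ together with a single integration by parts. First I would record a pointwise identity, valid for any smooth function $g$ on $\HH$ with no appeal to modularity: from the definitions $\partial_\alpha = (z-\overline{z})\tfrac{\partial}{\partial z} + \alpha$ and $\overline{\partial}_\beta = (\overline{z}-z)\tfrac{\partial}{\partial\overline{z}} + \beta$ one gets $\partial_\alpha g + \overline{\partial}_\beta g = (z-\overline{z})\bigl(\tfrac{\partial}{\partial z} - \tfrac{\partial}{\partial\overline{z}}\bigr)g + (\alpha+\beta)g$, and on $z=iy$, where $z-\overline{z}=2iy$ and the chain rule gives $\bigl(\tfrac{\partial}{\partial z}-\tfrac{\partial}{\partial\overline{z}}\bigr)g\big|_{z=iy} = -i\,\tfrac{d}{dy}g(iy)$, this becomes
$$\bigl(\partial_\alpha g + \overline{\partial}_\beta g\bigr)(iy) \;=\; 2y\,\frac{d}{dy}g(iy) \;+\; w\,g(iy), \qquad w=\alpha+\beta.$$

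Second, I would check that taking the constant part commutes with $\partial$ and $\overline{\partial}$. Applying $\partial_\alpha$ to a monomial $\LL^k q^m\overline{q}^n$ yields $\bigl((k+\alpha)\LL^k + 2m\LL^{k+1}\bigr)q^m\overline{q}^n$, and similarly $\overline{\partial}_\beta(\LL^k q^m\overline{q}^n) = \bigl((k+\beta)\LL^k + 2n\LL^{k+1}\bigr)q^m\overline{q}^n$; in both cases the $q^0\overline{q}^0$-part comes only from $m=n=0$, so $(\partial f)^0 = \partial_\alpha(f^0)$ and $(\overline{\partial}f)^0 = \overline{\partial}_\beta(f^0)$. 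Hence, setting $g(y):=(f-f^0)(iy)$, the first step shows that the integrand of $\Lambda(\partial f;s)+\Lambda(\overline{\partial}f;s)$ is precisely $\bigl(2y\,g'(y)+w\,g(y)\bigr)y^{s-1}$.

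Third, for $\mathrm{Re}(s)$ large all three regularised Mellin integrals converge absolutely: $g(y)$ decays exponentially as $y\to\infty$ since every term of $f-f^0$ has $m+n\geq 1$, and $g(y)$ grows at most polynomially in $1/y$ as $y\to 0$ by applying the modular transformation under $S$ to reduce to the expansion of $f$ at the cusp — exactly the estimates underlying the meromorphic continuation in the preceding proposition. Integrating $\int_0^\infty 2g'(y)\,y^s\,dy$ by parts, the boundary term $\bigl[2g(y)y^s\bigr]_0^\infty$ vanishes for such $s$, leaving $-2s\int_0^\infty g(y)y^{s-1}\,dy = -2s\,\Lambda(f;s)$. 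Combining the two steps gives $\Lambda(\partial f;s)+\Lambda(\overline{\partial}f;s) = (w-2s)\,\Lambda(f;s)$ on a right half-plane, which is the asserted relation. Finally, since $\Lambda(\partial f;s)$, $\Lambda(\overline{\partial}f;s)$ and $\Lambda(f;s)$ each extend meromorphically to all of $\C$ by the preceding proposition, the identity propagates from the half-plane to every $s\in\C$.

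There is no essential difficulty here; the only point requiring a little care is the vanishing of the boundary term and the absolute convergence of the Mellin integrals for $\mathrm{Re}(s)\gg 0$, which amounts to controlling the growth of $(f-f^0)(iy)$ as $y\to 0$ and is handled exactly as in the proof of the meromorphic continuation just established.
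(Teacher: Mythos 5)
Your argument is correct and is essentially the paper's own proof: the paper likewise restricts to $z=iy$, rewrites $y\,\partial f/\partial y$ as $\tfrac12(\partial_\alpha f+\overline{\partial}_\beta f-wf)$, integrates the exact form $d(f\,y^s)$ (your integration by parts), and concludes by analytic continuation. The only difference is that you spell out details the paper leaves implicit, namely that taking constant parts commutes with $\partial,\overline{\partial}$ and the convergence/boundary-term estimates for $\mathrm{Re}(s)\gg 0$.
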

\begin{proof} Set $z= i y$, where $y \in \R$.  For $\mathrm{Re}(s)$ sufficiently large,  take the regularised Mellin transform from $z$ to  $\infty$ (see \cite{MultipleL} for the definition)  of both sides of  the following equation,
which follows from the definition of $\partial_{\alpha}, \overline{\partial}_{\beta}$: 
 $$d \left(f y^s \right) =  \left( y \frac{\partial f}{\partial y}  + s f \right) y^{s-1} dy  =\frac{1}{2} \left( \partial_{\alpha} f + \overline{\partial}_{\beta} f - w f \right) y^{s-1} dy + s   f  y^{s-1} dy  \  .$$
Conclude by analytic contination for all $s\in \C$. 
Alternatively, one can simply apply lemma 2.6 in \cite{ZagFest} which relates the expansion coefficients $a^{(k)}_{m,n}$ of 
$f, \partial f$ and $\overline{\partial} f$, and substitute into the formula for the Dirichlet series $L^{(k)}( \bullet, s)$.  \end{proof}

\begin{example}  Using lemma \ref{lemLambdapartial} and the differential equations \S\ref{sectRAEisenstein} for $\mathcal{E}_{\alpha,\beta}$ one obtains equations which one can solve to  express the $L$-functions of real analytic Eisenstein series in terms of Hecke $L$-functions of holomorphic Eisenstein series. This follows from the proof of  the theorem stated below.  More precisely,  one finds that:
$$\Lambda(\mathcal{E}_{\alpha, \beta}; s)  = \frac{\pi\, P_{\alpha,\beta}(s)}{s (s-2) \ldots (s-w-2) (s-w)} \,\Lambda(\G_{w+2}; s+1)$$ 
where $w= \alpha+\beta$, and $P_{\alpha,\beta}(s) \in \Q[s]$ has degree less than $w/2$ and  satisfies  $$P_{\alpha,\beta}(w-s) = P_{\alpha,\beta}(s)\ .$$  
 Since $\Lambda(\overline{f};s) = \Lambda(f;s)$ we have 
 $\Lambda(\mathcal{E}_{\alpha, \beta}; s) =\Lambda(\mathcal{E}_{\beta, \alpha}; s)$
and so $P_{\alpha, \beta}$ is symmetric in $\alpha, \beta$.  
 The function $\Lambda(\mathcal{E}_{\alpha, \beta}; s)$ has  simple poles at $s=-1,w+1$ with rational residues, and  at $0,w$
 with residues proportional to $\zeta(w+1)$.
 Some examples:
\begin{eqnarray}
\Lambda(\mathcal{E}_{2,0};s) \ = \  \Lambda(\mathcal{E}_{0,2};s) &=&  \frac{(s-1) \pi }{s(s-2)}  \Lambda(\G_4, s+1)  \nonumber \\
\Lambda(\mathcal{E}_{1,1};s) &=&  \frac{-2\pi }{s(s-2)}  \Lambda(\G_4, s+1)  \nonumber 
 \end{eqnarray} 
 where $\Lambda(\G_{2n};s) = (2\, \pi)^{-s}  \Gamma(s)  \zeta(s) \zeta(s-2n+1)$. 
 In general, the middle term $P_{k,k}\in \Q$ is constant. In fact, we claim that one obtains  $$ (- 4\pi)^k \, \Lambda(\mathcal{E}_{k,k};s) =  \frac{(2k-1)!}{(k-1)!} \, \xi(s+1) \xi(s -2 k) $$
where $\xi(s) = \pi^{-s/2} \Gamma(s/2)\zeta(s)$ is the completed Riemann zeta function. 
\end{example}

\subsection{Relation between non-holomorphic and mixed $L$-functions.}
In \cite{MultipleL} we  defined  $L$-functions in several variables  as iterated Mellin transforms of theta functions.

\begin{thm} \label{thm: relateLambdaNonHolandLambdaMultiple}  Let $F\in \MI^E_{\ell}$ have modular weights $\alpha,\beta \geq 0 $, with $w= \alpha+\beta$. Then 
$$s (s-1) \ldots (s-w) \, \Lambda(F;s)$$
is a $\mathcal{Z}^{\sv}[\pi]$-linear combination  of  the mixed $L$-functions  defined in \cite{MultipleL}:
$$ \Lambda(f_1,\ldots, f_k; a_1, \ldots, a_{k-1}, a_k+s)$$
where $k\leq \ell$, $f_1,\ldots, f_k$ are holomorphic Eisenstein series, and $a_1,\ldots, a_k$ are integers such that $1\leq a_i \leq \mathrm{weight}(f_i)-1$  for all $1\leq i \leq {k-1}$. 
\end{thm}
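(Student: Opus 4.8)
The plan is to express the completed $L$-function $\Lambda(F;s)$ as an iterated Mellin transform and to recognize this iterated Mellin transform, after clearing denominators, as a finite $\ZZ^{\sv}[\pi]$-combination of the multiple $L$-functions of \cite{MultipleL}. First I would reduce to the lowest-weight case: by \S\ref{remsl2onMIE} an element $F \in \MI^E_\ell$ of modular weights $(\alpha,\beta)$ is a modular component of a vector-valued modular form reconstructed from a single lowest-weight function $f_{n,0} \in \lw(\MI^E_\ell)$ via the operators $\partial, \overline{\partial}$; since Lemma \ref{lemLambdapartial} relates $\Lambda(\partial f;s)$, $\Lambda(\overline{\partial} f;s)$ and $\Lambda(f;s)$ linearly (with polynomial coefficients in $s$), it suffices to prove the statement for $F \in \lw(\MI^E_\ell)$, i.e. $F$ of modular weights $(w,0)$. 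Multiplying $\Lambda(F;s)$ by $s(s-1)\cdots(s-w)$ absorbs the possible poles, which by the Proposition on $L$-functions are simple and located at $s = -k$ and $s = w+k$ for $0 \le k \le N$; in fact the pole structure together with the differential equation (\ref{holcrs}) shows that the residues are themselves $\ZZ^{\sv}$-multiples of lower-length $L$-values, so one can organize the argument by induction on the length $\ell$.

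Next I would use the differential structure. By theorem \ref{thm: moddiff}, for $F \in \lw(\MI^E_\ell)$ of weights $(w,0)$ we have $\partial F \in E[\LL] \times \MI^E_{\ell-1}$, i.e. $\partial F$ is a $\Q$-linear combination of terms $\LL^{j+1} \GE_{2m+2}\, G$ with $G \in \MI^E_{\ell-1}$ of appropriate modular weight and $1 \le$ (twist index) $\le 2m$ — these are exactly the data $(f_1, a_1)$ with $f_1$ a holomorphic Eisenstein series and $1 \le a_1 \le \mathrm{weight}(f_1) - 1$ that index the first slot of a multiple $L$-function. Restricting to $z = iy$ and taking the regularised Mellin transform, the operator $\partial$ acts on $F(iy)$ essentially as $y\,\partial_y + w$, so $\Lambda(\partial F; s)$ is a linear combination (over $\Q[s]$, coming from $\LL = -2\pi y$ and the eigenvalue shift) of integrals of the form $\int_0^\infty \GE_{2m+2}(iy)\, G(iy)\, y^{s'} \, dy/y$. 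Each such integral is, by the very definition of the multiple $L$-function in \cite{MultipleL} as an iterated Mellin transform of theta series (unfolding the innermost integral against $\GE_{2m+2}$), equal to $\Lambda(f_1, f_2, \ldots, f_k; a_1, a_2, \ldots, a_{k-1}, a_k + s')$ up to a power of $2\pi$, provided $G(iy)$ is itself — by the induction hypothesis applied to $G \in \MI^E_{\ell-1}$ — a $\ZZ^{\sv}[\pi]$-combination of such iterated Mellin integrands of depth $\le \ell - 1$. Re-solving Lemma \ref{lemLambdapartial} for $\Lambda(F;s)$ in terms of $\Lambda(\partial F;s)$ and $\Lambda(\overline{\partial} F;s)$, and noting $\Lambda(\overline{\partial} F;s)$ is handled by complex conjugation (which only introduces $\ZZ^{\sv}$ and $\pi$ factors, since $\MI^E$ is stable under conjugation and $\overline{\ZZ^{\sv}} = \ZZ^{\sv}$), completes the induction; the prefactor $s(s-1)\cdots(s-w)$ is what is needed to clear the elementary poles introduced when inverting the relation of Lemma \ref{lemLambdapartial}.

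The base case $\ell = 0$ is trivial ($F$ constant, $\Lambda \equiv 0$ after removing the constant part) and $\ell = 1$ is the real analytic Eisenstein series computation already carried out in the Example preceding the theorem, where $\Lambda(\mathcal{E}_{\alpha,\beta};s)$ is a rational multiple of $\Lambda(\GE_{w+2}; s+1)$, a single (length one) Hecke $L$-function of a holomorphic Eisenstein series — precisely a depth-one multiple $L$-value. The main obstacle I anticipate is the bookkeeping at the interface between the two papers: one must check that the regularised Mellin transform used here (with the tangential-base-point regularisation of the $q$-expansion and the $\LL$-poles) agrees with the iterated-Mellin regularisation of \cite{MultipleL}, and that the constraints $1 \le a_i \le \mathrm{weight}(f_i)-1$ on the intermediate exponents are exactly matched by the bound on the power of $\tau$ (equivalently $\LL$) appearing in $\partial$ applied to a coefficient function — this is where Lemma \ref{lemapplydeltak} and the bound "degree at most $2(n_1 + \cdots + n_r) + r$ in $\log q$" from Proposition \ref{propIE} must be invoked carefully. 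The genuinely substantive input, beyond this translation, is the fact that each unfolding step against a single $\GE_{2m+2}(iy)$ converts an $\MI^E$-integrand of length $\ell$ into one of length $\ell - 1$ together with exactly one new pair $(f_m, a)$, which is the content of theorem \ref{thm: moddiff} combined with the defining iterated structure of the multiple $L$-functions; everything else is linear algebra in $s$ over $\ZZ^{\sv}[\pi]$.
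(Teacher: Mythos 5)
Your overall route is the paper's: combine Lemma \ref{lemLambdapartial} with the differential structure of Theorem \ref{thm: moddiff}, invert the resulting linear relations among the $\Lambda(F_{\alpha,\beta};s)$ (which is where the factor $s(s-1)\cdots(s-w)$ originates), and recognise the leftover regularised Mellin transforms $\int_0^{\tone_{\infty}} f(iy)\,\mathcal{A}(iy)\,y^{s+p}\,dy/y$, with $f$ a holomorphic Eisenstein series and $\mathcal{A}\in\MI^E_{\ell-1}$, as the multiple $L$-functions of \cite{MultipleL}. However, two steps as you have written them do not go through. First, the reduction to the lowest-weight case is not actually a reduction: applying Lemma \ref{lemLambdapartial} to $F_{w,0}$ produces one equation involving \emph{two} unknowns, $\Lambda(F_{w,0};s)$ and $\Lambda(F_{w-1,1};s)$, since by \eqref{antiholcrs} the term $\overline{\partial}F_{w,0}$ contains $F_{w-1,1}$, which is not in $\overline{E}[\LL]\times\MI^E_{\ell-1}$. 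So $\Lambda(F_{w,0};s)$ cannot be determined in isolation and then propagated to the other bidegrees. The paper instead treats all $w+1$ unknowns $\Lambda_{\alpha,\beta}$, $\alpha+\beta=w$, simultaneously: Lemma \ref{lemLambdapartial} yields a tridiagonal $(w+1)\times(w+1)$ system $M_w$, and the prefactor is (up to the constant $2^{w+1}$) exactly $\det(M_w)=2^{w+1}s(s-1)\cdots(s-w)$. Your guess that the polynomial ``clears the poles introduced when inverting the relation'' is morally right, but the invertibility of the coupled system is a computation that must actually be done, not assumed.

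Second, and more seriously, your inductive step is a non sequitur. The induction hypothesis for $G\in\MI^E_{\ell-1}$ is a statement about the Mellin transform $\Lambda(G;s)$; it does not tell you that the function $G(iy)$ itself is a $\ZZ^{\sv}[\pi]$-linear combination of regularised iterated integrals of Eisenstein series, which is what you need in order to identify $\int_0^{\tone_{\infty}}f(iy)\,G(iy)\,y^{s+p}\,dy/y$ with a multiple $L$-function $\Lambda(f_1,\ldots,f_{k-1},f;r_1,\ldots,r_k,p+s)$. That statement is true, but it comes from the construction of $J^{\eqv}$ (Definition \ref{defnJeqv}): its coefficients are $\ZZ^{\sv}$-linear combinations of products of coefficients of $J$ and of $\sv\,J^{-1}$, and on the imaginary axis complex conjugation of an iterated integral of Eisenstein series introduces at most a sign. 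Once you replace ``by the induction hypothesis'' with this direct structural fact, no induction on $\ell$ at the level of $L$-functions is needed, and the argument closes as in the paper.
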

\begin{proof} By theorem \ref{thm: moddiff}, $\MI_k^E$ is generated by families of functions 
$F_{\alpha, \beta}$ satisfying
\begin{eqnarray}  
\partial F_{\alpha,\beta}  - (\alpha+1) F_{\alpha+1,\beta-1} & \in & E [\Lef] \times \MI^E_{k-1} \nonumber  \\ 
\overline{\partial} F_{\alpha,\beta}  - (s+1) F_{\alpha-1,\beta+1}  & \in & \overline{E}[\Lef] \times \MI^E_{k-1}    \nonumber 
\end{eqnarray} 
where we write $F_{-1,*} = F_{*,-1}=0$, and $E$ denotes the graded $\Q$-vector space generated by holomorphic Eisenstein series. 
Let $\alpha+\beta = w$ and write $\Lambda_{\alpha, \beta} = \Lambda(F_{\alpha,\beta};s)$ and $\Lambda_{-1,*}=\Lambda_{*,-1}=0$. Lemma \ref{lemLambdapartial} implies that
$$(\alpha+1) \Lambda_{\alpha+1,\beta-1}  +  (2s - w) \Lambda_{\alpha,\beta} +  (\beta+1) \Lambda_{\alpha-1,\beta+1}  \equiv 0 $$
modulo regularised Mellin transforms of elements in $(E+\overline{E})[\Lef] \times \MI^E_{k-1}$. We first need to check that this system of equations can be solved for the $\Lambda_{\alpha,\beta}$. This is obviously true for generic $s$ since the determinant of this  system of linear  equations
defines  a polynomial in $s$ and has finitely many zeros. In fact, one easily verifies that  the  matrix
$$M_w= \begin{pmatrix}
2s-w & 1 &     & \\
w & 2s-w & 2  &  \\
& w-1 & 2s-w & 3 &  \\
 &&  \ddots &  \ddots \\
& & & 2 & 2s-w & w \\
& & & & 1 & 2s-w 
\end{pmatrix}
$$
has determinant  
$\det (M_w) = 2^{w+1} \, s(s-1) \ldots (s-w) $. We deduce that 
$$s(s-1) \ldots (s-w)\, \Lambda(F_{\alpha, \beta}; s) $$
is a $\mathcal{Z}^{\sv}[\pi]$-linear combination of regularised Mellin transforms \cite{MultipleL}
\begin{equation} \label{inproofMToffA}  \int_0^{\!\tone_{\infty}} f(iy )   \mathcal{A}(iy )\,  y^{s+p} \frac{d y}{y}
\end{equation} 
and their complex conjugates, 
where $f \in E$, $\mathcal{A} \in \MI^E_{k-1}$, $p \in \N$.  By  definition of $J^{\eqv}$,  the restriction of $\mathcal{A}$ to the imaginary axis  can be expressed as a linear combination over $\mathcal{Z}^{\sv}[\pi]$ of regularised iterated integrals of length at most  $k-1$ of  holomorphic Eisenstein series $f_i(\tau) \tau^{r_i}$ where $0\leq r_i \leq n_i-2$ if $f_i$ is of weight $n_i$, since complex conjugation introduces at  most a sign.   Substituting into \eqref{inproofMToffA} gives  exactly the quantity $\Lambda(f_1,\ldots, f_{k-1}, f; r_1, \ldots, r_{k}, p+s)$.
\end{proof} 

This result holds for more general modular iterated integrals, but can also be made more precise in this situation: indeed,   the linear combination in the statement of the theorem is effectively computable.

\bibliographystyle{plain}
\bibliography{main}

\end{document}